\definecolor{refkey}{rgb}{1, 0.5, 0}  
\definecolor{labelkey}{rgb}{0.1,1,0.2}
\def\ds{\displaystyle}
\def\forall{\hbox{for all}~}
\def\L{{\bf L}}
\def\ve{\varepsilon}
\def\n{\noindent}
\def\A{{\cal A}}
\def\D{{\mathcal D}}
\def\R{{\mathbb R}}
\def\implies{\Longrightarrow}
\def\tv{\hbox{Tot.Var.}}
\def\v{\vskip 1em}
\def\C{{\bf C}}
\def\K{{\mathcal K}}
\def\F{{\mathcal F}}
\def\bega{\begin{array}}
\def\enda{\end{array}}
\def\begi{\begin{itemize}}
\def\endi{\end{itemize}}
\def\ov{\overline}
\def\wto{\rightharpoonup}
\def\Tilde{\widetilde}
\def\Hat{\widehat}
\def\bel{\begin{equation}\label}
\def\eeq{\end{equation}}
\def\sqr#1#2{\vbox{\hrule height .#2pt
\hbox{\vrule width .#2pt height #1pt \kern #1pt
\vrule width .#2pt}\hrule height .#2pt }}
\newtheorem{theorem}{Theorem}[section]
\newtheorem{corollary}[theorem]{Corollary}
\newtheorem{lemma}[theorem]{Lemma}
\newtheorem{definition}[theorem]{Definition}
\begin{document}

\title{\bf Vanishing Viscosity Solutions for Conservation Laws with Regulated Flux}

\author{Alberto Bressan$^*$, Graziano Guerra$^{**}$,
and Wen Shen$^*$
\\
\, \\
{\small (*) Department of Mathematics, Penn State University,
University Park, PA 16802, U.S.A.}\,\\ 
\small (**) Department of Mathematics and its Applications,
  University of Milano - Bicocca. \\ \, \\
{\small E-mails:  axb62@psu.edu, ~ graziano.guerra@unimib.it,~
wxs27@psu.edu}
\,\\}
\maketitle

\begin{abstract}
In this paper we introduce a concept of ``regulated function" $v(t,x)$ of two variables,
 which reduces to the classical definition when $v$ is independent of $t$.
We then consider a scalar conservation law of the form 
$u_t+F(v(t,x),u)_x=0$, where $F$ is smooth
and $v$ is a regulated function, possibly discontinuous w.r.t.~both $t$ and $x$.  
By adding a small viscosity, one obtains a well posed parabolic equation. 
As the viscous term goes to zero, the uniqueness of the vanishing viscosity limit is proved,
relying on comparison estimates for solutions to the corresponding
Hamilton--Jacobi equation.

As an application, we obtain the existence and uniqueness of solutions 
for a class of $2\times2$ 
triangular systems of conservation laws with hyperbolic degeneracy.
\end{abstract}

\medskip

\noindent\textbf{Keywords:} 
Conservation law with discontinuous flux, regulated flux function, vanishing viscosity, 
Hamilton-Jacobi equation, existence and uniqueness. 

\medskip

\noindent\textbf{2010 MSC:} 35L65, 35R05


\section{Introduction}
\label{sec:1}
\setcounter{equation}{0}

We consider  the Cauchy problem for a scalar conservation law
of the form
\begin{equation}
\begin{cases} 
u_t+ F(v(t,x), u)_x~=~0,\\
u(0,x)~=~u_0 (x),
\end{cases}
\label{1}
\end{equation}
where the flux function $F$ is continuously differentiable but the 
function $v$ can be discontinuous w.r.t.~both variables $t,x$.
Our main concern is the convergence of the viscous approximations
\begin{equation}
 u_t+ F(v(t,x), u)_x~=~\varepsilon\, u_{xx},\label{3}
\end{equation}
to a unique weak solution to~\eqref{1}, as the viscosity parameter
$\varepsilon\to 0$.  

Starting with the works by N.~Risebro and collaborators~\cite{GR1, GR2, KR, KR1}, 
scalar conservation laws with discontinuous coefficients have now become 
the subject of an extensive literature~\cite{AG1, AG2, AKR1, AP, BV, CR,  Diehl, 
GNPT, KR2, Mit, SV}, 
also including some multi-dimensional cases~\cite{AKR2, CCP, CCPG}.

Results on the uniqueness and stability of vanishing viscosity solutions  have been obtained mainly
in the case where $v=v(x)$ is  piecewise smooth
with finitely many jumps.  Aim of this paper is to develop an alternative approach, based
on comparison estimates for solutions to the corresponding Hamilton--Jacobi equation.
This will yield the uniqueness of the vanishing
viscosity limit under the more general assumption that $v$ is  a ``regulated" function
of the two variables $t$ and $x$.
We recall that a function of a single variable 
$v:\mathbb{R}\mapsto\mathbb{R}$ is {\it regulated}
if it admits left and right limits at every point.
This is true if and only if, for every interval $[x_1, x_2]$ and every $\varepsilon>0$,
there exists a 
piecewise constant function $\chi$ such that 
\begin{equation}
\|\chi-v\|_{\L^\infty([x_1, x_2])}~\leq~\varepsilon\,.\label{4}
\end{equation}
We extend this concept to functions of two variables, as follows.
\v

\begin{definition}\label{def:1}
 We say that a bounded 
function $v=v(t,x)$ is {\bf regulated} if, for every intervals $[x_1,
x_2]$ and $[0,T]$, and any
$\varepsilon>0$, the following holds.

There exist finitely many disjoint subintervals $[a_i, b_i]\subseteq [0,T]$,
Lipschitz continuous curves 
\[
\gamma_{i,1}(t)~<~\gamma_{i,2}(t)~<~\cdots~<\gamma_{i, N(i)}(t),\qquad\quad t\in 
[a_i, b_i]\,,
\]
and constants $\alpha_{i,0}, \alpha_{i,1},\ldots,\alpha_{i, N(i)}$
such that 
\begin{itemize}
\item[(i)] For every $t\in [a_i, b_i]$, the step function
  \begin{equation}\label{5}
  \chi_i(t,x)\,\doteq\,
  \begin{cases}
  \alpha_{i,0}, &\hbox{if}~ x<\gamma_{i,1}(t),\\
  \alpha_{i,k},
 &\hbox{if}~ \gamma_{i,k}(t)<x<\gamma_{i,k+1}(t),\qquad k=1,2,\ldots, N(i)-1,\\
\alpha_{i, N(i)}, &\hbox{if}~ \gamma_{i,N(i)}(t)<x,
  \end{cases}
\end{equation}
satisfies
\begin{equation}\label{6}\|\chi_i(t,\cdot)- v(t,\cdot)\|_{\L^\infty([x_1,x_2])}~\leq~\varepsilon\,.
\end{equation}
\item[(ii)] For every $i,k$, 
the  time derivative $\dot\gamma_{i,k}(t)=
{d\over dt}\gamma_{i,k}(t)$
coincides a.e.~with a regulated function.
\item[(iii)] The intervals $[a_i, b_i]$ cover most of $[0,T]$, namely
\begin{equation}\label{8} T- \sum_{i}(b_i-a_i)~\leq~\varepsilon.\end{equation}
\end{itemize}
\end{definition}

We remark that, if $v=v(x)$ is independent of time,
then it satisfies  Definition~\ref{def:1} if and only if $v$ is a regulated function
in the usual sense.

\v
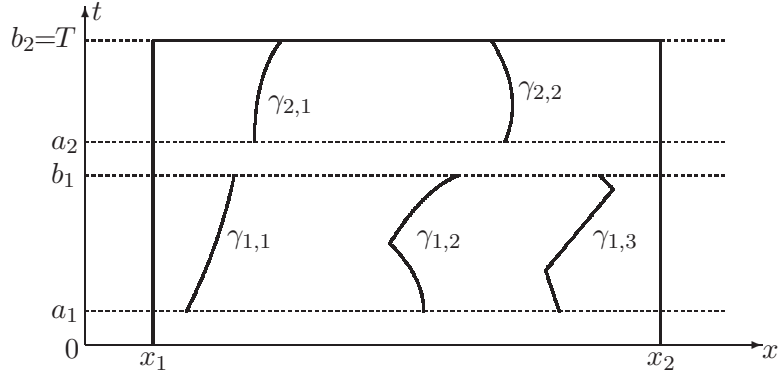
\begin{figure}[htbp]
\centering
\setlength{\unitlength}{0.9mm}
\begin{picture}(100,52)(0,-3) 
\put(0,0){\vector(1,0){100}}\put(100,-2){$x$}
\put(0,0){\vector(0,1){50}}\put(1,48){$t$}
\multiput(0,5)(1,0){95}{\line(1,0){0.5}} \put(-5,4){$a_1$}
\multiput(0,25)(1,0){95}{\line(1,0){0.5}}\put(-5,24){$b_1$}
\multiput(0,30)(1,0){95}{\line(1,0){0.5}}\put(-5,29){$a_2$}
\multiput(0,45)(1,0){95}{\line(1,0){0.5}}\put(-11,44){$b_2\!\!=\!\!T$}
\put(-3,-2){$0$} 
\thicklines
\put(10,0){\line(0,1){45}}\put(10,45){\line(1,0){75}}\put(85,0){\line(0,1){45}}
\put(8,-3){$x_1$}\put(83,-3){$x_2$}
\qbezier(15,5)(20,15)(22,25)
\put(21,15){$\gamma_{1,1}$}
\qbezier(25,30)(25,40)(29,45)
\put(27,35){$\gamma_{2,1}$}
\qbezier(50,5)(50,10)(45,15)\qbezier(45,15)(50,23)(55,25)
\put(49,15){$\gamma_{1,2}$}
\qbezier(70,5)(69,8)(68,11)\qbezier(68,11)(73,17)(78,23)\qbezier(78,23)(77,24)(76,25)
\put(75,15){$\gamma_{1,3}$}
\qbezier(62,30)(65,37)(60,45)
\put(64,37){$\gamma_{2,2}$}
\end{picture}
    \caption{\small According to Definition~1, a  {\bf regulated} function of two variables 
    $v=v(t,x)$ can
    be approximated by a piecewise constant function, with 
    jumps along finitely many Lipschitz curves $\gamma_{i,k}$.  The time derivatives $\dot \gamma_{i,k}$
   are regulated functions.}
\label{f:claw13}
\end{figure}

We shall study the convergence 
of the vanishing viscosity approximations~\eqref{3}, assuming
that $v$ is a regulated function.  Toward this goal, we also need
a standard assumption, which implies the uniform boundedness of viscous solutions. Namely:
\v  
\begi
\item[{\bf (A1)}] {\it  The values $F(\alpha, 0) ~=~h_0$ and
    $F(\alpha, 1) ~=~ h_1$
are independent of $\alpha$.}
\endi
For each $\ve>0$, let now
 $u^\ve=u^\ve(t,x)$ be a solution of~\eqref{3} taking values in $[0,1]$.
By extracting a suitable subsequence $\ve_n\to 0$ one achieves the weak convergence 
$u^{\ve_n}\wto u$ for some limit function $u$.

The main results in this paper show that
\begi
\item 
If $v=v(t,x)$ is a regulated function, then the weak limit
$u^{\varepsilon}\rightharpoonup u$
is unique.
Indeed, a comparison argument applied to the integrated functions 
\[
U^\ve(t,x)~\doteq~\int_{-\infty}^x u^\ve(t,y)\, dy
\]
shows that it converges uniformly on
$\left[0,T\right]\times\mathbb{R}$ as $\varepsilon\to 0$.
\item Under the additional assumption that for every rectangular
  domain of the form $\left[0,T\right]\times\left[x_{1},x_{2}\right]$
  one has
\begin{displaymath}
  \int_{0}^{T}\left(\tv\left\{v\left(t,\cdot\right);\left[x_{1},x_{2}\right]\right\}\right)\;
  dt ~<~+\infty,
\end{displaymath}
a compensated compactness argument implies that the unique weak limit $u$ is
a solution to the Cauchy problem~\eqref{1}.   In addition, if the partial derivative
$F_{\omega}\left(\alpha,\omega\right)$ does not vanish on any non-trivial interval 
$\left[\omega_{1},\omega_{2}\right]$, then the unique weak limit $u$ is
actually a strong limit.
\item 
If the function $v$ is obtained as the solution to a scalar conservation law:
\begin{equation}\label{cla}
v_t + g(v)_x~=~0, \qquad\qquad v(0,x) = v_0 (x),
\end{equation}
under quite general assumptions one can prove that $v$ is a regulated function.
The previous uniqueness results can thus be applied to a triangular system of the form
\begin{equation}\label{TS}
\left\{ \bega{rl} u_t+ F(v,u)_x&=~0,\\[1mm]
v_t + g(v)_x&=~0,\enda\right. \end{equation}
as the vanishing viscosity limit of the partially viscous system 
\[
\left\{ \bega{rl} u_t+ F(v,u)_x&=~\ve u_{xx},\\[1mm]
v_t + g(v)_x&=~0.\enda\right.
\]
\endi

Systems of conservation laws of the form~\eqref{TS}, which arise in a variety of applications~\cite{IT, Shen, TW},
were indeed the main motivation for the present study.

\medskip

The remainder of the paper is organized as follows. 
 In Section~\ref{sec:parab} we recall
some results on parabolic equations with singular coefficients and
prove some comparison results related to the corresponding
Hamilton--Jacobi equations. 
Section~\ref{sec:uniqueness} is the core of the paper, studying the class of fluxes for which 
the vanishing viscosity approximations have a unique 
weak limit.  We prove that this class 
includes all fluxes of the form $f(t,x,u)= F(v(t,x), u)$, 
where $F$ is a suitable smooth function and $v$ is regulated.  
 In Section~\ref{sec:compensated}, using a standard compensated compactness argument~\cite{Dafermos, KRT, Se}, 
 we prove that the unique limit is a weak solution to the corresponding conservation
 law. Under supplementary hypotheses we show
 the existence of a strong limit in $\L^1_{loc}$, for a sequence of  
 vanishing viscosity approximations.  Of course, the uniqueness of the weak limit implies that
 the strong limit is unique as well. Finally, Section~\ref{sec:reg} provides conditions which guarantee
 that the solution $v=v(t,x)$ of the  equation~\eqref{cla} is regulated.
Our analysis shows that this is the case if the flux function $g$   has at most one inflection point,
but may fail otherwise.
Some concluding remarks are given at the end in Section~\ref{sec:concluding}.

\section{Parabolic equations with discontinuous coefficients}
\label{sec:parab}
\setcounter{equation}{0}

In this section we consider a conservation law with discontinuous flux, 
in the presence of a fixed diffusion coefficient $\varepsilon>0$,
\begin{equation}
  \label{CPe}
  \begin{cases}
    u_t + f(t,x,u)_x~=~\varepsilon u_{xx}\,,\\
    u(0,x)~=~u_0 (x).
  \end{cases}
\end{equation}
In this case the equation is parabolic, and solutions can be represented as the fixed point of a 
strict contraction. The existence and uniqueness of solutions 
can be readily established, together with  their continuous dependence on the 
initial data and on the flux function.

If $f$ is smooth,
under mild hypotheses on the growth of the solution, this Cauchy problem 
is equivalent to the integral equation
\begin{equation}
  \label{eq:pebis}
  u(t,x)=\int_{\mathbb{R}} G^{\varepsilon}(t,x-y)\,u_0 (y)\, dy - \int_0^t\int_{\mathbb{R}} G^{\varepsilon}(t-s, x-y) f\left(s,y, u(s,y)\right)_{y}\, dy\, ds
\end{equation}
where, for $t>0$,
\begin{equation}\label{eq:Gs}
G(t,x)\;\doteq\;\frac{1}{\sqrt{4\pi t}}\, e^{-
    x^{2}/4 t},\qquad
  G^{\varepsilon}(t,x)\;\doteq\;\frac{1}{\sqrt{4\varepsilon\pi t}}\, e^{-
    x^{2}/4\varepsilon t}
\end{equation}
are the standard Gauss kernels. One has the identities
\begin{equation}
  \label{eq:nucleus_prop}
    \left\|G^{\varepsilon}(t,\cdot)\right\|_{\L^1}=1,\quad
    \left\|G_{x}^{\varepsilon}(t,\cdot)\right\|_{\L^1}= 2
    G^{\varepsilon}(t,0)=\frac{1}{\sqrt{\pi \varepsilon t}},
\end{equation}
for all $t>0$.  From 
 \eqref{eq:pebis}, an integration by parts yields
\begin{equation}
  \label{eq:fixedpoint}
  u(t,x)=\int_{\mathbb{R}} G^{\varepsilon}(t,x-y)\,u_0 (y)\, dy -
  \int_0^t\int_{\mathbb{R}} G_{x}^{\varepsilon}(t-s,\, x-y)
  f\bigl(s,y, u(s,y)\bigr)\, dy \,ds,
\end{equation}
which is meaningful even when $f$ is discontinuous.
Following~\cite{M, SY}, we say that $u=u(t,x)$ is a {\bf mild solution} of the Cauchy problem~\eqref{CPe} if it satisfies the integral identity~\eqref{eq:fixedpoint}.
A mild solution can thus be obtained as a fixed point
of  the transformation $u\mapsto {\mathcal P}^\ve u$, defined by
\begin{equation}
  \label{PT}
  \left(\mathcal{P}^{\varepsilon} u\right)(t,x)\;\doteq\;\int_{\mathbb{R}}
  G^{\varepsilon}
  (t,x-y)\,u_0 (y)\, dy -
  \int_0^t\int_{\mathbb{R}} G_x^{\varepsilon}
  (t-s,\, x-y)f\bigl(s,y, u(s,y)\bigr)\, dy \, ds.
\end{equation}
Multiplying by test function and integrating by parts, it is clear that a mild solution also solves~\eqref{CPe}
in distributional sense.
\v 
Let $T>0$ be given and consider the open domain
$\Omega\doteq \left]0,T\right[\times\mathbb{R}$.
For future use,
we collect here various hypotheses that will be imposed on the 
flux function $f:\Omega\times \R\mapsto\R$. 
\begin{description}
\item[{\bf (F1)}]  The function $f$ satisfies:
  \begi
  \item[(i)]  For each fixed $\omega\in\R$, the map
    $\left(t,x\right)\mapsto f(t,x,\omega)$ is in
    $\L^{\infty}(\Omega)$.
  \item[(ii)]
    The map $\omega\mapsto f(t,x,\omega)$ is twice continuously
    differentiable for any $\left(t,x\right)\in\Omega$ and
    there exists a constant $L\ge 0$ independent of $(t,x)$ such that
        \begin{equation}\label{fLip}
        \bigl|f(t,x,\omega_{1})-f(t,x,\omega_{2})\bigr|~\le~
        L\left|\omega_{1}-\omega_{2}\right|\qquad
        \text{ for all }~\omega_{1},\omega_{2}\,.
    \end{equation}
  \item[(iii)]
    There exists a constant $L_{1}\ge 0$ such that, 
    \begin{displaymath}
        \int_{\mathbb{R}}\bigl|f(t,x,0)\bigr|\, dx~\le~
        L_{1}\qquad \hbox{for all}~~t\ge 0.
    \end{displaymath}
  \endi

  \item[{\bf (F2)}]  For every
    $(t,x)\in\Omega$, the function $f$ satisfies
     $f(t,x,0)=0$ and $f(t,x,1)=h(t)$ 
   for some    $h\in\L^{\infty}\left(\left]0,T\right[,\mathbb{R}\right)$.
    \item[{\bf (F3)}]
    The function $f$ has the form
 \begin{equation}\label{Fax}
 f(t,x,\omega)~=~F\bigl(v(t,x), \omega\bigr),
 \end{equation}
  where $F\left(\alpha,\omega\right)$ is Lipschitz continuous w.r.t.~$\alpha$ and twice continuously 
  differentiable w.r.t.~$\omega$ satisfying
  \begin{equation}
    \label{Fass}
    F(\alpha,0)=0,\quad
    F(\alpha,1)=h_1,\quad \text{ for any }\alpha\in\mathbb{R}
\end{equation}
  and $v$ is a regulated function.
\end{description}


\v

The following theorem provides the existence and uniqueness of mild
solutions to~\eqref{CPe} under the assumption
{\bf (F1)} on the flux $f$. Moreover, it yields the continuous dependence
of solutions w.r.t.~the initial data and the flux function.

\begin{theorem}  \label{t:21}
  Consider the Banach space
  $Y_{T}\doteq \C^0([0,T],\,\L^1(\R))$
  endowed with the supremum norm 
  \[
  \|u\|_{T}\doteq
  \sup_{t\in [0,T]} \left\|u(t)\right\|_{\L^1(\R)}.
  \]
  Let the flux function $f$ satisfy {\bf (F1)} and take $u_0 \in\L^1(\R)$.
  \begi
  \item[(i)]
    The transformation $\mathcal{P}^{\varepsilon}$ defined in~\eqref{PT}
    is a Lipschitz continuous map
    from $Y_{T}$ into $Y_{T}$. It has a unique fixed point which
    is the unique solution to~\eqref{CPe} in $Y_T$. 
  \item[(ii)] Consider a sequence of initial data $(u_0 ^{\nu})_{\nu\geq 1}$ 
  converging to $u_0 $ in $\L^1(\R)$, and a sequence of fluxes $(f^\nu)_{\nu\geq 1}$,
  all satisfying {\bf (F1)} with the same constants $L, L_1$, and such that 
  $f^\nu(\cdot,\cdot, 0)\to f(\cdot,\cdot,0)$ in $\L^1(\Omega)$ and
  $f^\nu(\cdot,\cdot, \omega)\to f(\cdot,\cdot,\omega)$ in $\L^1_{loc}(\Omega)$, for 
  every $\omega\in\R$.
Then     the corresponding solutions
    $u^{\nu}$ to
    \begin{equation}
      \label{eq:parab-eq-app}
      \begin{cases}
        u_t + f^{\nu}\left(t,x,u\right)_x~=~\varepsilon u_{xx}\,,\\
        u(0)= u_0 ^{\nu}\,,
      \end{cases}
    \end{equation}
    converge in $Y_{T}$ to the solution $u$
    of \eqref{CPe}.
     \endi
\end{theorem}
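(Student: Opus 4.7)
The plan is to prove (i) by a direct Banach fixed point argument that exploits the integrability of $\|G^\varepsilon_x(t,\cdot)\|_{\L^1} = (\pi\varepsilon t)^{-1/2}$ near $t=0$, and to deduce (ii) from the fact that all the operators $\mathcal{P}^{\varepsilon,\nu}$ are strict contractions with the \emph{same} constant, so their fixed points depend continuously on the defining data $(u_0^\nu,f^\nu)$.

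For (i), I would first verify that $\mathcal{P}^\varepsilon$ sends $Y_T$ into itself. By (F1)(ii)--(iii) one has $\|f(s,\cdot,u(s,\cdot))\|_{\L^1(\R)}\le L_1+L\|u(s)\|_{\L^1}$, and then Young's convolution inequality together with \eqref{eq:nucleus_prop} gives
\[
\|\mathcal{P}^\varepsilon u(t)\|_{\L^1}\le \|u_0\|_{\L^1} + 2\sqrt{t/(\pi\varepsilon)}\,(L_1+L\|u\|_T),
\]
while continuity in $t$ follows from the $\L^1$-continuity of translations of $G^\varepsilon$ and $G_x^\varepsilon$. The contraction estimate reads
\[
\|\mathcal{P}^\varepsilon u_1(t)-\mathcal{P}^\varepsilon u_2(t)\|_{\L^1}\le L\int_0^t\frac{\|u_1(s)-u_2(s)\|_{\L^1}}{\sqrt{\pi\varepsilon(t-s)}}\,ds\le \frac{2L\sqrt{t}}{\sqrt{\pi\varepsilon}}\,\|u_1-u_2\|_T.
\]
Choosing $T_0$ so that $\kappa\doteq 2L\sqrt{T_0/(\pi\varepsilon)} < 1$ makes $\mathcal{P}^\varepsilon$ a strict contraction on $Y_{T_0}$. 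Banach's theorem yields a unique fixed point there; since it lies in $\C^0([0,T_0],\L^1)$, restarting from $u(T_0)$ extends the solution to $[T_0,2T_0]$, and concatenating finitely many such pieces produces the unique solution on $[0,T]$.

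For (ii), the key observation is that the same constant $\kappa$ works for every $\mathcal{P}^{\varepsilon,\nu}$, since only $L$ and $\varepsilon$ enter the above estimate. Writing $u^\nu-u=(\mathcal{P}^{\varepsilon,\nu}u^\nu-\mathcal{P}^{\varepsilon,\nu}u)+(\mathcal{P}^{\varepsilon,\nu}u-\mathcal{P}^\varepsilon u)$ and absorbing the first term on the left yields, on each sub-interval,
\[
\|u^\nu-u\|_{T_0}\le \frac{1}{1-\kappa}\,\|\mathcal{P}^{\varepsilon,\nu}u-\mathcal{P}^\varepsilon u\|_{T_0},
\]
so it suffices to prove $\|\mathcal{P}^{\varepsilon,\nu}u-\mathcal{P}^\varepsilon u\|_T\to 0$. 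The initial-data contribution is bounded by $\|u_0^\nu-u_0\|_{\L^1}\to 0$. For the flux contribution, set $g_\nu(s)\doteq\|f^\nu(s,\cdot,u(s,\cdot))-f(s,\cdot,u(s,\cdot))\|_{\L^1(\R)}$; this family is uniformly bounded by $2L_1+2L\|u\|_T$ in $\L^\infty([0,T])$, so a splitting at $s=t-\delta$ of $\int_0^t(\pi\varepsilon(t-s))^{-1/2}g_\nu(s)\,ds$ (with $\delta$ chosen small first) reduces the whole matter to showing $g_\nu\to 0$ in $\L^1([0,T])$.

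The main obstacle I foresee is precisely this convergence of $g_\nu$, because the hypothesis $f^\nu(\cdot,\cdot,\omega)\to f(\cdot,\cdot,\omega)$ holds only for \emph{fixed} $\omega$, whereas $g_\nu$ evaluates the fluxes along the moving argument $u(s,y)$. The plan is a three-step approximation. First, using $u\in \L^1([0,T]\times\R)$ and the equi-integrability of $\{f^\nu(\cdot,\cdot,0)\}$ in $\L^1(\Omega)$ (which follows from its $\L^1$-convergence), together with $|f^\nu(s,y,u(s,y))|\le |f^\nu(s,y,0)|+L|u(s,y)|$, one chooses $R$ so large that the contribution of $|y|>R$ to $\int_0^T g_\nu(s)\,ds$ is below $\eta$, uniformly in $\nu$. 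Second, on $[0,T]\times[-R,R]$ I approximate $u$ in $\L^1$ by a simple function $\tilde u$ taking finitely many values $\omega_1,\ldots,\omega_K$ with $\|u-\tilde u\|_{\L^1}<\eta$; the Lipschitz bound (F1)(ii) converts this into an $L\eta$ error separately for $f^\nu$ and $f$. Third, on each level set $\{\tilde u=\omega_k\}$ one has $|f^\nu(s,y,\tilde u)-f(s,y,\tilde u)|=|f^\nu(s,y,\omega_k)-f(s,y,\omega_k)|$, which tends to $0$ in $\L^1([0,T]\times[-R,R])$ by the assumed $\L^1_{loc}$ convergence; summing over the finite collection of levels and sending first $\nu\to\infty$ and then $\eta\to 0$ concludes the argument.
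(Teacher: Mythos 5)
Your proposal is correct and follows essentially the same route as the paper: a Banach fixed-point argument on sub-intervals of length $O(\varepsilon/L^2)$ for (i), and for (ii) the uniform contraction constant reducing everything to $\|\mathcal{P}^{\varepsilon,\nu}u-\mathcal{P}^{\varepsilon}u\|\to 0$, which in turn is handled by a time-splitting near the kernel singularity plus the simple-function approximation of $u$ to convert fixed-$\omega$ convergence of the fluxes into convergence along the moving argument. The only cosmetic difference is that you truncate in space before approximating $u$ by a simple function, whereas the paper absorbs the tail into the $\omega=0$ term using the global $\L^{1}(\Omega)$ convergence of $f^{\nu}(\cdot,\cdot,0)$.
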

\v

\begin{proof}
 {\bf 1.} 
  Using the inequality
  \begin{displaymath}
    \bigl|f(s,y, u(s,y))\bigr|
   ~ \le~ L \bigl|u(s,y)\bigr|+\bigl|f(s,y,0)\bigr|
  \end{displaymath}
  together with \eqref{eq:nucleus_prop},
  for any $u\in Y_{T}$ and $0\le t\le T$ by the assumptions {\bf (F1)} we obtain
    \begin{displaymath}
      \bigl\|(\mathcal{P}^{\varepsilon}
          u)(t,\cdot)\bigr\|_{\L^{1}(\R)}
     ~ \le~ \|
        u_0 \|_{\mathbf{L}^{1}(\R)}
      + \frac{2\sqrt{t}}{\sqrt{\pi\varepsilon}}\bigl(L\|u\|_{T}+L_{1}\bigr).
    \end{displaymath}
    Hence $\left\|\mathcal{P}^{\varepsilon}u\right\|_{T}<+\infty$.
    The dominated convergence theorem and the continuity of translations
  in $\mathbf{L}^{1}$ imply that the map $t\mapsto
  \left(\mathcal{P}^{\varepsilon}u\right)\left(t,\cdot\right)$ is continuous from
  $[0,T]$ into
  $\mathbf{L}^{1}(\R)$. Hence
  $\mathcal{P}^{\varepsilon}$ maps $Y_{T}$ into itself.
  
  Next, for any two functions $u_{1},u_{2}\in Y_{T}$, the Lipschitz
  continuity of $f$ implies
  \begin{displaymath}
      \bigl\|\mathcal{P}^{\varepsilon} u_{1} -
      \mathcal{P}^{\varepsilon}
    u_{2}\bigr\|_{T}~\le~ \frac{2L}{\sqrt{\pi\varepsilon}}\sqrt{T}
    \left\|u_{1}-u_{2}\right\|_{T}.
  \end{displaymath}
  This proves that $\mathcal{P}^{\varepsilon}$ is a well defined
  Lipschitz continuous map from $Y_{T}$ into itself.
  Choosing  
  \begin{equation}\label{TT} \Tilde T~=~ \frac{\pi\varepsilon}{16L^{2}}\,,\end{equation} 
  the above estimate shows  that 
  $\mathcal{P}^{\varepsilon}$ is a strict contraction restricted to $Y_{\Tilde
    T}$. Therefore $\mathcal{P}^{\varepsilon}$ has a unique fixed point on
  $Y_{\Tilde T}$. By induction, the same argument can be repeated on the intervals
  $[\Tilde T, 2\Tilde T]$,  $[2\Tilde T, 3\Tilde T] \ldots$ , until a unique solution is constructed on the 
  entire interval $[0,T]$.  
  This  concludes the proof of (i).
\v
\n {\bf 2.}   Toward a proof of (ii), 
  let $u$ be the unique mild solution of~\eqref{CPe}. 
We claim that
\begin{equation}
  \label{eq:smooth_approx2}
  \lim_{\nu\to\infty}\int_{\Omega}
  \bigl|f^{\nu}(t,x,u(t,x))-
  f(t,x,u(t,x))\bigr|\,dt\,dx~ =~0.
\end{equation}
Indeed, for any given $\epsilon>0$ we can approximate $u$ with a simple function
$u_{\epsilon}=\sum_{i=1}^{N}\omega_{i}\, \chi_{\Omega_{i}}$, with
$\Omega_{i}$, $i=1,\ldots,N$ bounded, so that 
$$\left\|u-u_{\epsilon}\right\|_{\L^1(\Omega)}~<~
\epsilon.$$ 
Thanks to the uniform Lipschitz continuity of both $f$ and $f^{\nu}$ w.r.t.~$\omega$, one has
\begin{displaymath}
  \begin{split}
    \int_{\Omega} &\bigl|f^{\nu}\left(t,x,u(t,x)\right)-
      f\left(t,x,u(t,x)\right)\bigr|\, dt\,dx \\
    &\le~
    \int_{\Omega}
    \bigl|f^{\nu}\left(t,x,u_{\epsilon}(t,x)\right)-
      f\left(t,x,u_{\epsilon}(t,x)\right)\bigr|\,dt\,dx+ 2L\epsilon\\
    &\le ~   \sum_{i=1}^{N}
    \int_{\Omega_{i}}\bigl|f^{\nu}\left(t,x,\omega_{i}\right)-
      f\left(t,x,\omega_{i}\right)\bigr|\,dt\,dx+
    \int_{\Omega}\bigl|f^{\nu}\left(t,x,0\right)-
      f\left(t,x,0\right)\bigr|\,dt\,dx+2L\epsilon.
  \end{split}
\end{displaymath}
By the assumptions on the convergence $f^\nu\to f$, 
since the sets $\Omega_{i}$ are bounded, we can take the limit as
$\nu\to\infty$ in the previous inequality and obtain
\begin{displaymath}
    \limsup_{\nu\to+\infty}\int_{\Omega} \bigl|f^{\nu}\left(t,x,u(t,x)\right)-
      f\left(t,x,u(t,x)\right)\bigr|\,dt\,dx ~\le~ 2L\epsilon\,.
  \end{displaymath}
  Since $\epsilon>0$ was arbitrary, 
this implies~\eqref{eq:smooth_approx2}.
\v
\n{\bf 3.}
It is enough to prove (ii) on $Y_{\Tilde T}$, where the Picard maps
$\mathcal{P}^{\varepsilon,\nu}$ is a strict contractions:
\begin{equation}\label{Pctr}
\|\mathcal{P}^{\varepsilon,\nu} u - \mathcal{P}^{\varepsilon,\nu} v\|_{\Tilde T}~\leq~
{1\over 2} \|u-v\|_{\Tilde T}\,.\end{equation}
  Indeed, the convergence can  then
be proved by induction on any interval $[k\tilde
  T,\, (k+1)\Tilde T]$ up to time $T$. 

Call $\mathcal{P}^{\varepsilon}$ and
$\mathcal{P}^{\varepsilon,\nu}$ the maps associated respectively to
Cauchy problems~\eqref{CPe} and ~\eqref{eq:parab-eq-app}, and let
$u$, $u^{\nu}$ be the corresponding fixed points. 
 Applying the contraction mapping theorem and the identity
$\mathcal{P}^{\varepsilon}u=u$, by~\eqref{Pctr} for any $\epsilon_o>0$ we have the estimate
\begin{align}
\nonumber
      \|&u-u^{\nu}\|_{\Tilde T}~\le~ 2
          \left\|u-\mathcal{P}^{\varepsilon,\nu}u\right\|_{\Tilde T}
         ~ =~2\left\|\mathcal{P}^{\varepsilon}u-
          \mathcal{P}^{\varepsilon,\nu}u\right\|_{\Tilde T}\\
          \nonumber
      &\le~ 2\left\|u_0  -  u_0 ^{\nu}
      \right\|_{\mathbf{L}^{1}}
      +2\sup_{t\in [0,\Tilde T]}
      \int_{0}^{t}\int_{\mathbb{R}}\frac{
      \left|f\left(s,y,u(s,y)\right)-
        f^{\nu}\left(s,y,u(s,y)\right)\right|}{\sqrt{\pi \varepsilon
          \left(t-s\right)}}
      \,dy\,ds\\
      \nonumber
      &\le ~2\left\|u_0  - u_0 ^{\nu}
      \right\|_{\L^1}
      +\frac{2}{\sqrt{\pi \varepsilon  \epsilon_o}}
      \sup_{t\in [\epsilon_{o},\Tilde T]}
      \int_{0}^{t-\epsilon_o}\int_{\mathbb{R}} \bigl|f\left(s,y,u(s,y)\right)-
        f^{\nu}\left(s,y,u(s,y)\right)\bigr|\,
      dy\,ds\\
      \nonumber
      &\quad+2\sup_{t\in [0,\Tilde T]}
      \int_{\max\left\{t-\epsilon_o,0\right\}}^{t}\int_{\mathbb{R}}\frac{1}{\sqrt{\pi \varepsilon
          \left(t-s\right)}}
      \Big[2L\left|u(s,y)\right|+\left|f(s,y,0)\right|+\left|f^{\nu}\left(s,y,0\right)
        \right|\Big]
      dy\,ds\\\nonumber
      &\le ~2\left\|u_0  - u_0 ^{\nu}
      \right\|_{\mathbf{L}^{1}}
      +\frac{2}{\sqrt{\pi \varepsilon
          \epsilon_o}}
      \int_{\Omega}
      \bigl|f\left(s,y,u(s,y)\right)-
        f^{\nu}\left(s,y,u(s,y)\right)\bigr|\,
      dy\,ds\\\nonumber
      &\quad+2\sup_{t\in [0,\Tilde T]}
      \int_{\max\left\{t-\epsilon_o,0\right\}}^{t}\frac{1}{\sqrt{\pi \varepsilon
          \left(t-s\right)}}
        \left[\int_{\mathbb{R}}2L\left|u(s,y)\right|\,
      dy+ 2L_{1}\right]ds\\\nonumber
      &\le~ 2\left\|u_0  - u_0 ^{\nu}
      \right\|_{\mathbf{L}^{1}}
      +\frac{2}{\sqrt{\pi \varepsilon
          \epsilon_o}}
      \int_{\Omega}
      \bigl|f(s,y,u(s,y))-
        f^{\nu}(s,y,u(s,y))\bigr|\,
      dy\,ds\\
        &\quad+2\left[2L\left\|u\right\|_{\Tilde T}
          +2L_{1}\right]2\sqrt{\frac{\epsilon_o}{\pi\varepsilon}}\,.
          \label{eq:long_comp}
  \end{align}
  With the help of~\eqref{eq:smooth_approx2} we obtain
  \begin{displaymath}
    \limsup_{\nu\to+\infty} \|u-u^{\nu}\|_{\Tilde T}
  ~  \le
    ~8\Big(L\left\|u\right\|_{\Tilde T}
          +L_{1}\Big)\sqrt{\frac{\epsilon_o}{\pi\varepsilon}}\,.
  \end{displaymath}
  Since  $\epsilon_o>0$ was arbitrary, this implies $\ds\lim_{\nu\to+\infty}u^{\nu}=u$ in $Y_{\Tilde T}$,
concluding the
  proof of (ii).
\end{proof}

 \v  
 The previous convergence result applies, in particular, to the case where
 the functions $f^\nu$ are obtained from $f$ by a mollification.
  More precisely, let $\rho\in \C^\infty_c(\R)$ be a standard mollification kernel, so that 
  \[\rho\geq 0,  \qquad Supp(\rho)\subset [-1,1], \quad \mbox{and}\quad \|\rho\|_{\L^1}=1.\]
  As usual, we then
define the rescaled kernels
\[\rho_{\delta}(\xi)\doteq \delta^{-1} \rho(\delta^{-1} \xi).\]
For a flux function satisfying   {\bf (F1)}, we
 consider the smooth approximations:
  \begin{equation}
    \label{eq:flux_approx}
    f_{\delta}(t,x,\omega)~\doteq~\int_{\Omega}
    \rho_{\delta}(t-s)\,\rho_{\delta}(x-y)\,
    f(s,y,\omega)\,dy\,ds\,.
  \end{equation}
  The functions $f_{\delta}(t,x,\omega)$ are $\C^\infty$ in the variables
  $(t,x)$ and satisfy {\bf (F1)}, with uniform constants $L, L_1$. Choosing 
  a decreasing sequence $\delta_\nu\to 0$ and defining $f^\nu = f_{\delta_\nu}$, 
   the assumptions in Theorem~\ref{t:21} (ii) are then satisfied.

 If the flux function $f=f(t,x,u)$ satisfies the additional assumptions
{\bf (F2)}, then the above functions $f^\nu = f_{\delta_\nu}$ obtained by a mollification satisfy
  \begin{equation}
      \label{eq:f_with_f1_0}
      f^{\nu}\left(t,x,0\right)=0,\quad
      f^{\nu}\left(t,x,1\right)=
      h^{\nu}(t)\,\doteq\,\int_0^T\rho_{\delta_{\nu}}    \left(t-s\right)h(s)\; ds,
       \quad \forall (t,x)\in\Omega.
    \end{equation}

 \v
    By well known regularity results in the theory of  parabolic equations~\cite{H, Lu, M}, 
    if the flux function $f$ is smooth, then
      the mild solutions constructed in Theorem~\ref{t:21} 
      are classical solutions.
 Relying on the fact that 
 \begi
\item  classical solutions to \eqref{CPe} satisfy various comparison
 properties, and  
 \item mild solutions can be approximated by classical ones,
 \endi
the following theorems and corollaries 
show that similar comparison properties
are valid for mild solutions as well.  
In a later section, these properties will play a key role in proving uniqueness
  of the vanishing viscosity limit.
\v

  \begin{theorem}
    \label{th:comp1}
    Let $u$ and $v$ be two mild solutions of the parabolic equation in \eqref{CPe},
    with initial data $u_0 ,v_0 \in \L^1(\R)$. Assume that  the flux function 
     $f$ satisfies {\bf (F1)}.     Then the following properties hold.
    \begi
    \item[(i)] The total mass is conserved in time:
      \begin{equation}
        \label{eq:mass_conservation}
        \int_{\mathbb{R}}u(t,x)\,dx~=~\int_{\mathbb{R}}
        u_0 (x)\,dx\qquad\text{ for all }~t\ge 0.
      \end{equation}
    \item[(ii)]
       A comparison holds:
      \begin{equation}
        \label{eq:firstcomparison}
        u_0 ~\le~ v_0 \qquad\implies\qquad
        u(t,\cdot)~\le ~v(t,\cdot)\quad \text{ for all }~t\ge 0.
      \end{equation}
    \item[(iii)] The $\L^1$ distance between the two solutions is non-increasing in time:
      \begin{equation}
        \label{eq:contraction}
        \int_{\mathbb{R}}\left|u(t,x)-v(t,x)\right|\,dx
       ~ \le~ \int_{\mathbb{R}}\left|u_0 (x)-v_0 (x)\right|\,dx\qquad 
        \text{ for all } t\ge 0.
      \end{equation}    \endi
  \end{theorem}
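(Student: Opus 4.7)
The plan is to reduce all three statements to the smooth-flux case via the mollification \eqref{eq:flux_approx} and then pass to the limit using Theorem~\ref{t:21}(ii). Let $f_\nu\doteq f_{\delta_\nu}$ be the smooth approximations defined there, and let $u^\nu,v^\nu$ denote the corresponding mild solutions with initial data $u_0,v_0$. Because $f_\nu$ is $\C^\infty$ in $(t,x)$ and $\C^2$ in $\omega$, the parabolic regularity theory cited in \cite{H,Lu,M} guarantees that $u^\nu,v^\nu$ are classical $C^{1,2}$ solutions, with enough decay as $|x|\to\infty$ (inherited from $u_0\in\L^1$ via the Gaussian kernel representation \eqref{eq:fixedpoint}) to justify the integrations by parts used below. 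Theorem~\ref{t:21}(ii) further yields $u^\nu\to u$ and $v^\nu\to v$ in $Y_T=\C^0([0,T],\L^1(\R))$ and, along a subsequence, pointwise a.e.\ on $\Omega$.

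For the classical solutions, property (i) follows by integrating the equation $u^\nu_t+f_\nu(t,x,u^\nu)_x=\varepsilon u^\nu_{xx}$ in $x\in\mathbb{R}$: the spatial boundary terms vanish by decay, giving $\frac{d}{dt}\int u^\nu\,dx=0$. For (ii), assuming $u_0\le v_0$, I would set $w\doteq v^\nu-u^\nu$ and use the mean-value theorem to rewrite
\[
w_t+(a^\nu w)_x~=~\varepsilon w_{xx}, \qquad a^\nu(t,x)~\doteq~\int_0^1 \partial_\omega f_\nu\bigl(t,x,\,s\,u^\nu+(1-s)v^\nu\bigr)\,ds,
\]
a linear parabolic equation with smooth coefficients bounded by $L$ (thanks to \eqref{fLip}). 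The standard maximum principle for such equations propagates $w(0,\cdot)\ge 0$ to $w(t,\cdot)\ge 0$ for all $t>0$.

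For (iii), rather than a direct Kato-type computation, I would deduce the contraction from (i) and (ii) jointly. Let $\underline u^\nu$ and $\bar u^\nu$ be the classical solutions corresponding to the same flux $f_\nu$ but with initial data $\min(u_0,v_0)$ and $\max(u_0,v_0)$. Two applications of (ii) yield $\underline u^\nu\le u^\nu,v^\nu\le \bar u^\nu$, hence $|u^\nu-v^\nu|\le \bar u^\nu-\underline u^\nu$ pointwise; integrating and using (i) gives
\[
\int_{\mathbb{R}}|u^\nu(t,x)-v^\nu(t,x)|\,dx~\le~\int_{\mathbb{R}}\bigl(\bar u^\nu(t,x)-\underline u^\nu(t,x)\bigr)\,dx~=~\int_{\mathbb{R}}|u_0(x)-v_0(x)|\,dx.
\]
Passing $\nu\to\infty$ then finishes the proof: (i) survives since $Y_T$-convergence permits taking $\nu\to\infty$ under $\int$; (ii) survives since a.e.\ inequalities are preserved under a.e.\ limits of subsequences; and (iii) survives by continuity of $\|\cdot\|_{\L^1}$ under $Y_T$-convergence.

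The main obstacle I anticipate is not in any single step but in justifying cleanly that the classical-solution calculations in the smooth case are legitimate, in particular that $u^\nu(t,\cdot)$ decays sufficiently at spatial infinity, uniformly in $t\in[0,T]$, to apply the standard maximum principle in (ii) and to discard boundary terms in (i). This relies on pointwise bounds coming from \eqref{eq:fixedpoint} of the form $|u^\nu(t,x)|\lesssim \int G^{\varepsilon}(t,x-y)|u_0(y)|\,dy + (\text{controlled source})$, which under \textbf{(F1)} yield the required decay; care is needed to ensure these bounds are uniform in $\nu$ so that the maximum principle argument for (ii) admits initial data of the form $\min(u_0,v_0)$ or $\max(u_0,v_0)$, which are merely in $\L^1$ rather than smooth.
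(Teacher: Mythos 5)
Your proposal is correct and follows the same overall strategy as the paper: mollify the flux, invoke the classical comparison principle for smooth solutions, pass to the limit via Theorem~\ref{t:21}(ii), and deduce (iii) from (i) and (ii) by sandwiching $u,v$ between the solutions with initial data $\min(u_0,v_0)$ and $\max(u_0,v_0)$ --- part (iii) in particular is word-for-word the paper's argument. The one place where the paper's route is genuinely simpler, and which dissolves what you identify as your ``main obstacle,'' is part (i): instead of integrating the PDE for smooth approximants and justifying the vanishing of boundary terms at spatial infinity, the paper integrates the mild-solution identity \eqref{eq:fixedpoint} directly in $x$, using only $\int_{\R}G^{\varepsilon}(t-s,x-y)\,dx=1$ and $\int_{\R}G^{\varepsilon}_{x}(t-s,x-y)\,dx=0$ together with Fubini; this needs neither smoothness of $f$ nor any decay estimate on $u$. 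A second minor difference: in (ii) the paper mollifies the initial data as well as the flux (mollification preserves the ordering $u_0\le v_0$), so the comparison is applied to genuinely classical solutions with smooth data; adopting this removes your residual concern about applying the maximum principle with data that are merely in $\L^1$, such as $\min(u_0,v_0)$.
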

\v

\begin{proof}
To prove (i) it suffices
     to integrate \eqref{eq:fixedpoint}, observing that
   $$\int_{\mathbb{R}}G^{\varepsilon}\left(t-s,x-y\right)\,dx~=~1,\qquad\qquad
   \int_{\mathbb{R}}G^{\varepsilon}_{x}\left(t-s,x-y\right)\,dx~=~0.$$
    \v
    \n To prove (ii),
   we   choose convergent sequences of smooth fluxes $f^{\nu}\to f$ and of smooth
      initial data   $u_0 ^{\nu}\to u_0 $, $v_0 ^{\nu}\to v_0 $, with
      $u_0 ^{\nu}\le  v_0 ^{\nu}$ for every $\nu\geq 1$.
      Since these are smooth solutions, a standard comparison theorem yields
            \begin{equation}\label{compuv}
        u^{\nu}(t,x)~\le~ v^{\nu}(t,x)\qquad\text{ for all }
        t\ge 0,\ x\in\mathbb{R}. 
      \end{equation}
      The result is proven by taking the limit as $\nu\to \infty$ in~\eqref{compuv}, 
      using Theorem~\ref{t:21}.
 \v
\n  To prove (iii),  consider the initial data
  $$u_{o,*}~\doteq~\min\left\{u_0 , v_0 \right\},\qquad
  \qquad u_0 ^{*}~\doteq~\max\left\{u_0 ,  v_0 \right\},$$ 
  and let $u_*(t,x), u^*(t,x)$ be the corresponding solutions. 
  Since
    $u_{o,*}\le u_0 ,\, v_0 \le u_0 ^{*}$, by the comparison property (ii)
     the corresponding solutions  satisfy
    \[u_{*}(t,x)~\le ~u(t,x),\, v(t,x)~\le~ u^{*}(t,x)
    \qquad \mbox{for all }~t,x\in\Omega.\]
    By the conservation
    property \eqref{eq:mass_conservation}, this implies
\begin{eqnarray*}
 &&     \hspace{-2cm}  \int_{\mathbb{R}}\left|u(t,x)-v(t,x)\right|\,dx~\le~
        \int_{\mathbb{R}}\bigl[u^{*}(t,x)-u_{*}(t,x)\bigr]\,dx\\
     &  =&\int_{\mathbb{R}}\bigl[u_0 ^{*}(x)- u_{o,*}(x)\bigr]\,dx
        ~=~\int_{\mathbb{R}}\left| u_0 (x)- v_0 (x)\right|\,dx,
\end{eqnarray*}
      completing the proof.
\end{proof}
    
\medskip

In the following, together with \eqref{CPe} we consider a second 
Cauchy problem with different flux and initial data:
 \begin{equation}\label{CP2} \begin{cases}
    u_t + f^\sharp(t,x,u)_x~=~\varepsilon u_{xx}\,,\\
    u(0,x)~=~u_0 ^\sharp(x).
  \end{cases}
\end{equation}

  \begin{theorem}
    \label{th:comp2}
    Let $u$ and $u^{\sharp}$ be two solutions of \eqref{CPe} and \eqref{CP2},
    respectively. Assume that $u_0 ,u_0 ^\sharp\in \L^1(\R)$ and that both 
    fluxes $f$ and
    $f^{\sharp}$ satisfy {\bf (F1)}. 
    Let $U$ and $U^{\sharp}$ be
    the integrated functions:
    \begin{equation}
      \label{eq:def_integrated}
      U(t,x)=\int_{-\infty}^{x}u\left(t,\xi\right)\,d\xi,\qquad
      U^{\sharp}(t,x)=\int_{-\infty}^{x}u^{\sharp}\left(t,\xi\right)\,d\xi.
    \end{equation}
    Then the following comparison property holds.
    \begi    \item[]  
      Let $[a,b]$ be an interval containing the
      range of $u^{\sharp}(t,x)$ and assume that \linebreak
      $\eta\in\L^\infty\bigl([0,T]\bigr)$  and the constant $\bar
      \eta\ge 0$
      satisfy 
      \begin{equation}
        \label{eq:hyp_on_etat}
        \begin{cases}
          f^{\sharp}(t,x,\omega)\le f(t,x,\omega) + \eta(t)\quad 
          & \text{for all}~(t,x,\omega)\in\,]0,T[\times\R\times [a,b],\\
          U(0,x)\le U^{\sharp}(0,x) + \bar \eta\quad\ \qquad & \text{for all}~x\in\R.
        \end{cases}
        \end{equation}
      Then, for all $t\in [0,T]$ and $x\in\R$, one has
      \begin{equation}
        \label{eq:integrated_comparison}
        U(t,x)~\le~ U^{\sharp}(t,x)+\bar \eta+\int_{0}^{t}\eta(s)\,ds
        \,.
      \end{equation}
      \endi
  \end{theorem}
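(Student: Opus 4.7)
The plan is to establish the comparison via a parabolic maximum principle applied to
\begin{equation*}
W(t,x)~\doteq~U(t,x) - U^{\sharp}(t,x) - \bar\eta - \int_0^t \eta(s)\,ds,
\end{equation*}
showing $W\le 0$. Since $u,u^{\sharp}\in\L^1(\R)$, mass conservation (Theorem~\ref{th:comp1}~(i)) makes $U$, $U^{\sharp}$, and hence $W$, uniformly bounded on $[0,T]\times\R$. I would execute the argument first for classical (smooth) solutions and then extend to mild solutions using Theorem~\ref{t:21}~(ii).

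For smooth $f$, $f^{\sharp}$, integrating the viscous equations from $-\infty$ to $x$ (exploiting the decay $u,u_x\to 0$ as $x\to-\infty$ coming from $u\in \L^1$) yields the viscous Hamilton--Jacobi equations $U_t+f(t,x,U_x)=\varepsilon U_{xx}$ and $U_t^{\sharp}+f^{\sharp}(t,x,U^{\sharp}_x)=\varepsilon U^{\sharp}_{xx}$. Subtracting, writing $f(t,x,U_x)-f(t,x,U^{\sharp}_x)=b(t,x)\,W_x$ with $|b(t,x)|\le L$ (from {\bf (F1)}(ii)), and invoking the crucial observation that $U^{\sharp}_x(t,x)=u^{\sharp}(t,x)\in[a,b]$ so that the first line of~\eqref{eq:hyp_on_etat} applies pointwise at $\omega=U^{\sharp}_x$, one obtains the linear parabolic inequality
\begin{equation*}
W_t + b(t,x)\,W_x - \varepsilon\,W_{xx}~\le~0, \qquad W(0,\cdot)\le 0.
\end{equation*}
The parabolic maximum principle for bounded solutions with bounded drift then yields $W\le 0$, which is~\eqref{eq:integrated_comparison}.

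For general mild solutions, I would mollify both fluxes as in~\eqref{eq:flux_approx} and smooth the initial data. The mollification preserves the flux inequality in the form $f^{\sharp,\nu}(t,x,\omega)\le f^{\nu}(t,x,\omega)+\eta^{\nu}(t)$ for $\omega\in[a,b]$, with $\eta^{\nu}\to\eta$ in $\L^1$ (because $\int\rho_{\delta}(x-y)\,dy=1$) and $\bar\eta^{\nu}\to\bar\eta$. Applying the smooth-case argument to the approximations gives the comparison for $U^{\nu}$, $U^{\sharp,\nu}$, and Theorem~\ref{t:21}~(ii) promotes the $Y_T$-convergence $u^{\nu}\to u$, $u^{\sharp,\nu}\to u^{\sharp}$ into uniform convergence of the integrated functions on $[0,T]\times\R$, so~\eqref{eq:integrated_comparison} follows by taking the limit.

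The main obstacle is that~\eqref{eq:hyp_on_etat} is postulated only for $\omega\in[a,b]$, whereas the smooth approximate solutions $u^{\sharp,\nu}$ generated by mollifying the flux need not keep their range inside $[a,b]$; consequently, the key inequality may fail at the location of the approximate extremum when rerunning the smooth-case argument. I would circumvent this by first extending $f$ and $f^{\sharp}$ in the variable $\omega$ to all of $\R$ while preserving the flux inequality globally (for instance, replacing their values for $\omega<a$ or $\omega>b$ by a $C^2$ interpolation of their boundary values at $\omega=a$ and $\omega=b$, adjusted so that the inequality $f^{\sharp}\le f+\eta$ persists everywhere) and only then mollifying in $(t,x)$; the modification is invisible on the range of $u^{\sharp}$, and the smooth comparison then applies uniformly in $\omega$.
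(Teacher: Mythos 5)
Your proposal follows essentially the same route as the paper: integrate the viscous equations to obtain viscous Hamilton--Jacobi equations for $U$ and $U^{\sharp}$, compare $W=U-U^{\sharp}-\bar\eta-\int_0^t\eta(s)\,ds$ with the zero function via a parabolic maximum principle, and remove the smoothness assumptions by mollifying fluxes and data and invoking Theorem~\ref{t:21}~(ii). The only structural difference is cosmetic: the paper keeps the nonlinear Hamiltonian $\mathcal{H}^{\nu}(t,x,\omega)=f^{\nu}(t,x,\omega+u^{\sharp,\nu})-f^{\sharp,\nu}(t,x,u^{\sharp,\nu})+\eta^{\nu}$ and checks $\mathcal{H}^{\nu}(t,x,0)\ge 0$ so that $\overline W\equiv 0$ is a supersolution, whereas you linearize $f(t,x,U_x)-f(t,x,U^{\sharp}_x)=b(t,x)W_x$ and apply the linear maximum principle; both hinge on evaluating the flux inequality at $\omega=u^{\sharp}(t,x)\in[a,b]$.

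You correctly isolate the one delicate point, which the paper leaves implicit: after mollification the approximate solutions $u^{\sharp,\nu}$ need not stay in $[a,b]$, so~\eqref{eq:hyp_on_etat} cannot be invoked at $\omega=u^{\sharp,\nu}(t,x)$ without further care. However, your remedy of extending \emph{both} $f$ and $f^{\sharp}$ outside $[a,b]$ is not quite right as stated: the theorem does not confine the range of $u$ to $[a,b]$, so altering $f$ for $\omega\notin[a,b]$ changes $u$ and hence the function $U$ appearing in the conclusion. The repair is to modify only $f^{\sharp}$ off $[a,b]$ (for instance, subtract $2L\,\theta(\omega)$ with $\theta$ of class $C^2$, vanishing on $[a,b]$ and dominating $(\omega-b)_{+}+(a-\omega)_{+}$), which by the Lipschitz bound~\eqref{fLip} forces $f^{\sharp}\le f+\eta$ for \emph{all} $\omega$, while leaving $u^{\sharp}$ unchanged because the mild-solution formula~\eqref{eq:fixedpoint} only evaluates $f^{\sharp}$ at $\omega=u^{\sharp}(s,y)\in[a,b]$; the inequality then survives mollification in $(t,x)$ verbatim. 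Finally, note that mollifying the initial data can slightly violate the second line of~\eqref{eq:hyp_on_etat}; the paper handles this by proving the estimate with an arbitrary $\eta_1>\bar\eta$ and letting $\eta_1\downarrow\bar\eta$ at the end, which is the precise form of your ``$\bar\eta^{\nu}\to\bar\eta$.''
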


\begin{proof}
    Take a decreasing sequence $\delta_{\nu}\downarrow 0$  and consider the mollifications
    \begin{displaymath}
      \begin{split}
        \eta^{\nu}\left(t\right)&=\int_{\mathbb{R}}\rho_{\delta_{\nu}}\left(t-s\right)
        \eta(s)\,ds,\quad \\
        u_0 ^{\nu}\left(x\right)&=\int_{\mathbb{R}}\rho_{\delta_{\nu}}\left(x-y\right)
        u_0 (y)\,dy,\\
        u_0 ^{\sharp,\nu}\left(x\right)&=\int_{\mathbb{R}}\rho_{\delta_{\nu}}\left(x-y\right)
        u_0 ^{\sharp}(y)\,dy.
      \end{split}
    \end{displaymath}
    Construct 
    the corresponding mollifications of the fluxes
    $f^{\nu}$, $f^{\sharp,\nu}$,
    so that the first inequality in~\eqref{eq:hyp_on_etat} remains valid 
    for the smooth approximations: 
    \begin{equation}
      \label{eq:hyp_on_etat_app1}
          f^{\sharp,\nu}(t,x,\omega)\le f^{\nu}(t,x,\omega) +
          \eta^{\nu}(t) \qquad \text{ for all
          }(t,x,\omega)\in\,]0,+\infty[\times\mathbb{R}\times [a,b].      \end{equation}
      Fix any $\eta_{1}>\bar \eta$. Then,  for all $\nu$ sufficiently large,
      by the second inequality
      in~\eqref{eq:hyp_on_etat} it follows
    \begin{equation}
      \label{eq:hyp_on_etat_app2}
          U^\nu(0,x)~\le ~U^{\sharp,\nu}(0,x) + \eta_{1}\qquad \text{ for all }
          x\in\mathbb{R}.
      \end{equation}      
 Let $u^{\nu}$ be the  corresponding solution to~\eqref{eq:parab-eq-app}, so that
    \begin{displaymath}
      u^{\nu}(t,\xi)=\int_{\mathbb{R}} G^{\varepsilon}(t,\xi-y)\,u_0 ^{\nu}(y)\, dy -
      \int_0^t\int_{\mathbb{R}} G_{x}^{\varepsilon}(t-s,\, \xi-y)
      f^{\nu}(s,y, u^{\nu}(s,y))\,dy\,ds\,.
    \end{displaymath}
    Integrating the above equation over the interval $]-\infty,x[$ one obtains
    \begin{displaymath}
      U^{\nu}\left(t,x\right)=\int_{\mathbb{R}}
      G^{\varepsilon}(t,x-y)\,U^{\nu}(0,y)\, dy -
      \int_0^t\int_{\mathbb{R}} G^{\varepsilon}(t-s,\, x-y)
      f^{\nu}(s,y, u^{\nu}(s,y))\,dy\,ds\,.
    \end{displaymath}
    Since $u^{\nu}$ and its integral $U^{\nu}$ are smooth, the above
    integral identity implies that $U^{\nu}$ is a smooth solution to
    the Hamilton--Jacobi equation 
    \begin{displaymath}
      U^{\nu}_{t}+f^{\nu}\left(t,x,U_{x}^{\nu}\right)~=~\varepsilon U^{\nu}_{xx}.
    \end{displaymath}
 Similarly, $U^{\sharp,\nu}$ solves
    \begin{displaymath}
      U^{\sharp,\nu}_{t}+f^{\sharp,\nu}(t,x,U_{x}^{\sharp,\nu})~=~\varepsilon
      U^{\sharp,\nu}_{xx}.
    \end{displaymath}
   Introduce the function 
   \[E^{\nu}(t)=\eta_{1}+
    \int_{0}^{t}\eta^{\nu}(s)\,ds,\]
    depending only on time.  Combining
    the above equations,
    we  obtain
    \begin{displaymath}
      \bigl[U^{\nu}-U^{\sharp,\nu}-E^{\nu}\bigr]_{t}+
      f^{\nu}\left(t,x,U_{x}^{\nu}\right)-f^{\sharp,\nu}\bigl(t,x,U_{x}^{\sharp,\nu}\bigr)
      +\eta^{\nu}~=~\varepsilon
      \bigl[U^{\nu}- U^{\sharp,\nu}-E^{\nu}\bigr]_{xx}\,.
    \end{displaymath}
    Define 
    \[W^{\nu}\doteq U^{\nu}-U^{\sharp,\nu}-E^{\nu}\]
    and introduce the Hamiltonian
    function
    \begin{displaymath}
      \mathcal{H}^{\nu}\left(t,x,\omega\right)~\doteq~f^{\nu}\bigl(t,x,\omega+
        u^{\sharp,\nu}(t,x)\bigr) -
      f^{\sharp,\nu}\bigl(t,x,u^{\sharp,\nu}(t,x)\bigr)+
      \eta^{\nu}(t).
    \end{displaymath}
  Observe  that $W^{\nu}$ is a smooth solution to a viscous
    Hamilton-Jacobi equation:
    \begin{equation}
        \label{eq:hamiltonJ}
        W_{t}+\mathcal{H}^{\nu}\left(t,x,W_{x}\right)=\varepsilon
        W_{xx},
      \end{equation}
      with $$W^{\nu}(0,x)~=~U^{\nu}(0,x)-
        U^{\sharp,\nu}(0,x)-\eta_{1}~\le~ 0.$$
      Because of~\eqref{eq:hyp_on_etat_app1}, we have
      \begin{displaymath}
        \begin{split}
          \mathcal{H}^{\nu}(t,x,0)&~=~f^{\nu}\bigl(t,x,
            u^{\sharp,\nu}(t,x)\bigr) -
          f^{\sharp,\nu}\bigl(t,x,u^{\sharp,\nu}(t,x)\bigr)+
          \eta^{\nu}(t)\\
          &~\ge~ \inf_{\omega\in
          \left[a,b\right]}
           \left[f^{\nu}\bigl(t,x,
            \omega\bigr) -
          f^{\sharp,\nu}\bigl(t,x,\omega\bigr)+
          \eta^{\nu}(t)
        \right] ~\ge ~ 0\qquad\text{ for all }(t,x)\in\Omega.
      \end{split}
    \end{displaymath}
    Therefore the function $\ov W\equiv 0$ is a super-solution
    to~\eqref{eq:hamiltonJ}. 
    A standard comparison argument  now yields
    \begin{displaymath}
    W^{\nu}(t,x)~=~U^{\nu}(t,x)-U^{\sharp,\nu}(t,x)-E^{\nu}(t)~\le ~0,
    \qquad \text{
      for all }\bigl(t,x\bigr)\in\Omega.
  \end{displaymath}
  Letting $\nu\to\infty$ we obtain
\[
        U(t,x)~\le~ U^{\sharp}(t,x)+\eta_1+\int_{0}^{t}\eta(s)\,ds
        \,.
\]
  Since this is valid for every 
  $\eta_{1}> \bar\eta$,  the theorem is proved.
\end{proof}
\v

Let $f=f(t,x,\omega)$ be a
 flux function  satisfying {\bf (F1)}.  
The Lipschitz  property~\eqref{fLip} suggests that, for vanishing viscosity limits $u^\ve\to u$,
the characteristic speed should be~$\leq L$.  In particular, for every limit solution $u$,
one expects a bound of the form
\[
\int_{-\infty}^{x_0  -L(t-t_0)} |u(t,y)|\, dy~\leq~\int_{-\infty}^{x_0}
|u(t_0,y)|\, dy.
\]
Indeed, bounds of this form are well known in the case of a smooth flux~\cite{K}.
As a straightforward consequence of the comparison
Theorem~\ref{th:comp2},
we now prove
a similar estimate for viscous solutions.
\v
\begin{corollary}
\label{cor1}
Let $f=f(t,x,\omega)$ be a flux function satisfying the assumptions
{\bf (F1)} and {\bf (F2)}. 
For $\ve>0$, let $u^\ve$ be the solution to~\eqref{CPe} with initial data satisfying 
$u_0 \geq 0$,
$u_0 \in \L^1(\R)$. 
Then, for any $t_0,\delta_0 \geq 0$, $t>t_0$
and $x_0\in\R$, one has
the bound
\begin{equation}\label{eb1}
\int_{-\infty}^{x_0 -\delta_{0} -L(t-t_0)} u^\ve(t,y)\, dy~\leq~\int_{-\infty}^{x_0}
u(t_0,y)\, dy + E_\ve(t-t_0, \delta_0),\end{equation}
where
\begin{equation}\label{EE}E_\ve(\tau, \delta_0)~\doteq~\|
  u_0 \|_{\L^1}
  \cdot \int_{\delta_0/\sqrt{\tau\varepsilon}}^{+\infty} 
  G(1,x)\, dx,\quad \tau >0,
\end{equation}
where $G$ is standard Gauss kernel in~\eqref{eq:Gs}.
\end{corollary}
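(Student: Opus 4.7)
The plan is to bound $u^\ve$ from above by an auxiliary solution evolving under the simplest flux compatible with the Lipschitz bound~\eqref{fLip}. Concretely, I take $f^\sharp(t,x,\omega)\doteq -L\omega$ and let $u^\sharp$ be the mild solution of the linear drift-diffusion equation $u^\sharp_t-L\,u^\sharp_x=\varepsilon\,u^\sharp_{xx}$ on $[t_0,T]\times\R$, with $u^\sharp(t_0,\cdot)=u^\ve(t_0,\cdot)$. Because {\bf (F2)} guarantees that $v\equiv 0$ solves~\eqref{CPe}, Theorem~\ref{th:comp1}(ii) gives $u^\ve(t_0,\cdot)\ge 0$, and the explicit Gaussian representation of $u^\sharp$ then forces $u^\sharp\ge 0$ with $\|u^\sharp(t,\cdot)\|_{\L^1}=\|u_0\|_{\L^1}$ for every $t\ge t_0$. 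On the half-line $[0,+\infty)$ containing the range of $u^\sharp$, Lipschitz continuity~\eqref{fLip} combined with $f(t,x,0)=0$ produces $f^\sharp(t,x,\omega)=-L\omega\le f(t,x,\omega)$.

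Applying Theorem~\ref{th:comp2} (with the obvious time translation placing $t_0$ in the role of the initial time, and with $\eta\equiv 0$, $\bar\eta=0$) I get $U^\ve(t,x)\le U^\sharp(t,x)$ for all $t\ge t_0$ and $x\in\R$. The change of variables $\xi=x+L(t-t_0)$ reduces $u^\sharp$ to a pure heat equation, so a Fubini swap yields
\[
U^\sharp(t,x)~=~\int_{\R}\Phi^\ve\bigl(t-t_0,\,x+L(t-t_0)-y\bigr)\,u^\ve(t_0,y)\,dy,
\]
where $\Phi^\ve(\tau,z)\doteq\int_{-\infty}^z G^\ve(\tau,w)\,dw$. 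Setting $x=x_0-\delta_0-L(t-t_0)$ makes the argument of $\Phi^\ve$ collapse to $x_0-\delta_0-y$; splitting the $y$-integral at $y=x_0$, bounding $\Phi^\ve\le 1$ on $\{y\le x_0\}$ and $\Phi^\ve(t-t_0,x_0-\delta_0-y)\le \Phi^\ve(t-t_0,-\delta_0)$ on $\{y>x_0\}$, and using mass conservation from Theorem~\ref{th:comp1}(i) yields the decomposition $U^\ve(t_0,x_0)+\|u_0\|_{\L^1}\,\Phi^\ve(t-t_0,-\delta_0)$. A rescaling $w=\sqrt{\varepsilon(t-t_0)}\,z$ converts the Gaussian tail $\Phi^\ve(t-t_0,-\delta_0)$ into $\int_{\delta_0/\sqrt{\varepsilon(t-t_0)}}^{+\infty}G(1,z)\,dz$, matching~\eqref{EE} exactly and giving~\eqref{eb1}.

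The main technical point I anticipate is that $u^\sharp(t,\cdot)$ need not sit in a bounded interval $[a,b]$ on $[t_0,T]\times\R$ when $u^\ve(t_0,\cdot)$ is only in $\L^1$, as the formal hypothesis of Theorem~\ref{th:comp2} requires. Two harmless workarounds are available: either run the comparison first on $[t_0+\tau,T]$ with $\tau>0$, where Gaussian smoothing makes $u^\sharp$ bounded, and then pass to $\tau\to 0^+$ using the $\C([0,T];\L^1)$ continuity provided by Theorem~\ref{t:21}; or note that the mollified linear flux $f^{\sharp,\nu}=-L\omega$ inherits the inequality $f^{\sharp,\nu}\le f^\nu$ on $[0,+\infty)$, so the maximum principle argument in the proof of Theorem~\ref{th:comp2} still applies when the truncation $[a,b]$ is replaced by this half-line.
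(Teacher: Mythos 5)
Your proof is correct and follows essentially the same route as the paper: both compare $u^\ve$ against the linear drift--diffusion flux $-L\omega$ (the paper does this by noting directly that $U^\ve$ is a subsolution of $U_t-LU_x=\ve U_{xx}$, you do it by invoking Theorem~\ref{th:comp2} with $f^\sharp=-L\omega$), then reduce to the heat equation, and obtain the tail bound by splitting the Gaussian representation using the monotonicity of the integrated initial profile and mass conservation. The technical point you flag about the range of $u^\sharp$ is handled correctly by either of your workarounds, and in any case only needs the inequality $-L\omega\le f(t,x,\omega)$ on $[0,+\infty)$, which (F1)--(F2) give.
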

 \v
 
 \begin{proof} Using the same approximation argument
as in the proof of Theorem~\ref{th:comp2} we can assume that
the flux and
the initial datum are smooth.
Consider the integrated function
 \[U^\ve(t,x) =\int_{-\infty}^x u^\ve(t,y)\, dy.\]
 Then $U^\ve$ is a sub-solution of 
 $$U_t - L\, U_x~=~\ve U_{xx}\,,$$
 so that $V^\ve(\tau,y)\doteq U^\ve(t_{0}+\tau, x_{0}+y-L\tau) $ is a subsolution to 
 $$V_{\tau}~=~\ve V_{yy}\,.$$
 Therefore, using the fact
 that $V^{\varepsilon}(t_0,\cdot)$ is monotone increasing, we have
\begin{eqnarray*} 
V^{\varepsilon}(\tau,y)  &\leq& \left(\int_{-\infty}^0+\int_0^{+\infty} \right)
G^{\varepsilon}(\tau, y-\xi) V^{\varepsilon}(0,\xi)\, d\xi\\
&\le&  V(0,0) + \Big(\sup_{\xi\ge 0} V^{\varepsilon}(0,\xi)\Big)\cdot
\int_0^{+\infty}  G^{\varepsilon}(\tau, \, \xi-y)\, d\xi \\
                         &\leq&U^{\varepsilon}(t_0,x_{0}) + \| u_0 \|_{\L^1}
                                \int_0^{+\infty} G^{\varepsilon}(\tau,\xi-y)\, dy\,.
\end{eqnarray*}
In terms of the function $U^\ve$, with $\tau=t-t_{0}$ and
$y=-\delta_{0}$ this yields
\[
U(t,~x_0-\delta_0-L(t-t_0)) ~\leq~U(t_0, x_0) +\| u_0 \|_{\L^1}\cdot \int_0^{+\infty}
G^{\varepsilon}(t-t_0, \delta_0+y)\, dy.
\]
Since 
\[
\int_0^{+\infty}
G^{\varepsilon}(t-t_0, \delta_0+y)\, dy = \int_{\delta_0/\sqrt{(t-t_0)\varepsilon}}^{+\infty} 
  G(1,x)\, dx,
\]
this proves~\eqref{eb1}.
\end{proof}
  
We observe that, for each fixed $\delta_0>0$,
the error term $E_\ve$ in~\eqref{EE} goes to zero as $\ve\to 0$, uniformly as  
$\tau$ ranges over any bounded interval $]0, T]$ and $\varepsilon$
ranges in $\left]0,1\right]$. 

The following Corollary shows that the set
$\left\{u^{\varepsilon}\left(t,\cdot\right)\right\}$ is
tight (as defined, for example,  in Chapter 5 of~\cite{Royden}).
\begin{corollary}
  \label{cor1bis}
  Let $f=f(t,x,\omega)$ be a flux function satisfying the assumptions
  {\bf (F1)} and {\bf (F2)} and $u_0\in \L^1(\R)$ with $u_0\geq 0$. 
  For any $\ve>0$, let $u^\ve$ be the solution
  to~\eqref{CPe}.  Then the set of functions
  $\bigl\{u^{\varepsilon}\left(t,\cdot\right):
    \varepsilon\in\left]0,1\right],\;t\in\left[0,T\right]\bigr\}$ is tight. 
    More precisely,
  for any $\delta>0$ there exists $M>0$ which depends only on
  $\delta$, $u_0$
  and $L$ such that 
  \begin{equation}
    \label{eq:tight}
    \int_{\mathbb{R}\setminus
      \left[-M,M\right]}u^{\varepsilon}(t,x)\;dx<\delta,\quad
    \text{ for all }t\in\left[0,T\right],\;\varepsilon\in\left]0,1\right].
  \end{equation}
\end{corollary}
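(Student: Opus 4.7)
The plan is to derive~\eqref{eq:tight} directly from Corollary~\ref{cor1}, which controls the left tail of $u^\varepsilon(t,\cdot)$ uniformly in $\varepsilon\in\left]0,1\right]$ and $t\in\left]0,T\right]$, together with a completely symmetric estimate for the right tail. The key input is that the error term $E_\varepsilon(\tau,\delta_0)$ can be made uniformly small by a suitable choice of the shift $\delta_0$, while the $\L^1$-integrability of $u_0$ lets us place most of its mass in a bounded interval.

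Concretely, since $\tau\le T$ and $\varepsilon\le 1$ imply $\delta_0/\sqrt{\tau\varepsilon}\ge \delta_0/\sqrt{T}$, one has
\[
\sup_{\varepsilon\in\left]0,1\right],\;\tau\in\left]0,T\right]}E_\varepsilon(\tau,\delta_0)~\le~\|u_0\|_{\L^1}\int_{\delta_0/\sqrt{T}}^{+\infty}G(1,x)\,dx,
\]
and the right-hand side vanishes as $\delta_0\to\infty$ because $G(1,\cdot)\in\L^1$. Given $\delta>0$, first fix $\delta_0$ so that this supremum is strictly less than $\delta/4$, then pick $M_0>0$ with $\int_{|x|>M_0}u_0(y)\,dy<\delta/4$, and set $M\doteq M_0+\delta_0+LT$. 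Applying Corollary~\ref{cor1} with $t_0=0$ and $x_0=-M_0$ gives, for every $t\in\left]0,T\right]$ and $\varepsilon\in\left]0,1\right]$,
\[
\int_{-\infty}^{-M}u^\varepsilon(t,y)\,dy~\le~\int_{-\infty}^{-M_0-\delta_0-Lt}u^\varepsilon(t,y)\,dy~\le~\int_{-\infty}^{-M_0}u_0(y)\,dy+E_\varepsilon(t,\delta_0)~<~\delta/2.
\]

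For the right tail one needs the mirror bound
\[
\int_{x_0+\delta_0+L(t-t_0)}^{+\infty}u^\varepsilon(t,y)\,dy~\le~\int_{x_0}^{+\infty}u^\varepsilon(t_0,y)\,dy+E_\varepsilon(t-t_0,\delta_0),
\]
which applied with $t_0=0$ and $x_0=M_0$ yields $\int_M^{+\infty}u^\varepsilon(t,y)\,dy<\delta/2$; combining this with the left-tail estimate gives~\eqref{eq:tight} for $t\in\left]0,T\right]$, while the case $t=0$ is immediate from the choice of $M_0$ and the inequality $M>M_0$. To establish the mirror bound, set $\widetilde U^\varepsilon(t,x)\doteq\int_x^{+\infty}u^\varepsilon(t,y)\,dy$ and repeat the proof of Corollary~\ref{cor1} symmetrically: since $u^\varepsilon\ge 0$ by comparison in Theorem~\ref{th:comp1}(ii) with the zero solution (available thanks to the normalization $f(t,x,0)=0$ in {\bf (F2)}), the two-sided Lipschitz bound $|f(t,x,u^\varepsilon)|\le Lu^\varepsilon=-L\widetilde U^\varepsilon_x$ shows that $\widetilde U^\varepsilon$ is a subsolution of $W_t+LW_x=\varepsilon W_{xx}$, so that $\widetilde V^\varepsilon(\tau,y)\doteq\widetilde U^\varepsilon(t_0+\tau,\,x_0+y+L\tau)$ is a subsolution of the heat equation with monotone non-increasing initial datum, and the Gauss-kernel split used in Corollary~\ref{cor1} delivers the claim (the total mass bound $\sup\widetilde V^\varepsilon(0,\cdot)\le\|u_0\|_{\L^1}$ taking the role previously played by $\sup V^\varepsilon(0,\cdot)$). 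The only mild technical point is verifying the subsolution property through a smooth approximation of $f$ exactly as in the original argument; once this is done, the bookkeeping of constants above concludes the proof.
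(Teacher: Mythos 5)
Your proposal is correct and follows essentially the same route as the paper: choose $\delta_0$ and a bounded interval capturing most of the mass of $u_0$ so that the tails plus the error term $E_\varepsilon$ are small, set $M$ to account for the shift $\delta_0+LT$, and invoke Corollary~\ref{cor1} together with its right-tail mirror image. The only difference is that you spell out the symmetric right-tail estimate (via $\widetilde U^\varepsilon(t,x)=\int_x^{+\infty}u^\varepsilon(t,y)\,dy$ being a subsolution of $W_t+LW_x=\varepsilon W_{xx}$), which the paper's one-line proof leaves implicit; that derivation is sound.
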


\v

\begin{proof}
  Fix $\delta>0$ and chose $x_{0}<0$, $\delta_{0}>0$ such that
  \begin{displaymath}
    \int_{-\infty}^{x_{0}}u_0 (x)\; dx+
    \int_{-x_{0}}^{+\infty}u_0 (x)\; dx+\left\|
      u_0 \right\|_{\L^{1}}
    \int_{\delta_{0}/\sqrt{T}}^{+\infty}G\left(1,x\right)\;dx<\frac{\delta}{2},
  \end{displaymath}
  then define $M=-x_{0}+\delta_{0}+LT$ and apply Corollary~\ref{cor1}.
\end{proof}

\v

Next, we consider two flux functions, say $f$ and $\hat f$, 
both satisfying the assumptions {\bf (F1)} and {\bf (F2)},   which coincide
on the half line $\{x<0\}$.   Let $u^\ve$ be the solution to~\eqref{CPe} 
and let $\hat u^\ve$ be the solution to 
 \begin{equation}\label{CPh}
    \begin{cases}
      u_{t}+\hat f(t,x,u)_{x}~=~\varepsilon
      u_{xx},\\
      u(0,x)=u_0 (x).
    \end{cases}
  \end{equation}
  Notice that here we are taking the same initial data $u_0 \in \L^1(\R)$.
We seek an estimate on the difference $u^\ve-\hat u^\ve$, on a region 
of the form $\{ x< -Lt\}$.

\begin{corollary}
  \label{cor2}
  In the above setting, assume that the two fluxes $f,\hat f$ 
 satisfy
  {\bf (F1), (F2)}, and coincide for $x<0$.
 Then the difference between the corresponding solutions $u^\ve,\hat
 u^\ve$ 
 satisfies
 \begin{equation}\label{uhu}
 \left|\int_{-\infty}^{-Lt-\xi} \bigl(u^\ve(t,y) - \hat u^\ve(t,y)\bigr)\, dy \right|~\leq~
 4\|u_0 \|_{\L^1}\cdot \int_{\xi/\sqrt{t\varepsilon}}^{+\infty} G(1,y)\, dy
 \end{equation}
 for all $\xi>0$.
 \end{corollary}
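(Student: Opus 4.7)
The plan is to apply a maximum principle argument in the left half-plane $\{x<0\}$, where the two fluxes agree, comparing the difference $W\doteq U^\varepsilon-\hat U^\varepsilon$ against an explicit Gaussian barrier adapted to the maximal characteristic speed $L$. Following the approximation scheme of Theorems~\ref{th:comp1}--\ref{th:comp2}, I first replace $f,\hat f$ by smooth mollifications $f^\nu,\hat f^\nu$ satisfying {\bf(F1),(F2)} with uniform constants and coinciding on $\{x<-\delta_\nu\}$ for some $\delta_\nu\downarrow 0$; by Theorem~\ref{t:21}(ii) the corresponding classical solutions $u^{\varepsilon,\nu},\hat u^{\varepsilon,\nu}$ converge to $u^\varepsilon,\hat u^\varepsilon$ in $Y_T$. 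Setting $W^\nu\doteq U^{\varepsilon,\nu}-\hat U^{\varepsilon,\nu}$ and applying the mean value theorem in the $\omega$-variable of $f^\nu$ to the difference of the two viscous Hamilton--Jacobi equations, one obtains, on $\{t>0,\,x<-\delta_\nu\}$, a linear parabolic equation
\[
W^\nu_t+c^\nu(t,x)\,W^\nu_x~=~\varepsilon\,W^\nu_{xx}, \qquad W^\nu(0,x)=0,
\]
with coefficient $|c^\nu|\le L$. Mass conservation together with $u^{\varepsilon,\nu},\hat u^{\varepsilon,\nu}\ge 0$ gives the trivial boundary estimate $|W^\nu(t,-\delta_\nu)|\le|U^{\varepsilon,\nu}(t,-\delta_\nu)|+|\hat U^{\varepsilon,\nu}(t,-\delta_\nu)|\le 2\|u_0\|_{\L^1}$.

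The core of the argument is the construction of an explicit barrier
\[
\Phi(t,x)~\doteq~4\|u_0\|_{\L^1}\int_{-(x+Lt)/\sqrt{\varepsilon t}}^{+\infty} G(1,y)\, dy.
\]
Direct computation shows that $\Phi$ solves the transport-diffusion equation $\Phi_t-L\Phi_x=\varepsilon\Phi_{xx}$ with equality, that $\Phi_x>0$, and that $\Phi(0,x)=0$ for $x<0$, $\Phi(t,x)\to 0$ as $x\to-\infty$, and $\Phi(t,0)\ge 2\|u_0\|_{\L^1}$ (since the remaining integral is at least $1/2$). Consequently, for any $c\in[-L,L]$,
\[
\Phi_t+c\,\Phi_x-\varepsilon\Phi_{xx}~=~(c+L)\Phi_x~\ge~0,
\]
so $\Phi$ is a supersolution of the same linear parabolic operator driving $W^\nu$, regardless of the particular value of the drift coefficient $c^\nu$.

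Both functions $\Phi\pm W^\nu$ are then bounded supersolutions of this operator that are non-negative on the parabolic boundary of $\{t>0,\,x<-\delta_\nu\}$ and vanish as $x\to-\infty$; the standard maximum principle therefore gives $|W^\nu(t,x)|\le\Phi(t,x)$ throughout this region. Evaluating at $x=-Lt-\xi$ (which lies in the region once $\delta_\nu<\xi$), the identity $-(x+Lt)/\sqrt{\varepsilon t}=\xi/\sqrt{\varepsilon t}$ yields
\[
|W^\nu(t,-Lt-\xi)|~\le~4\|u_0\|_{\L^1}\int_{\xi/\sqrt{\varepsilon t}}^{+\infty} G(1,y)\,dy.
\]
Sending $\nu\to\infty$, Theorem~\ref{t:21}(ii) passes the integrated functions to their limits, and the claim follows. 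The delicate step is the design of $\Phi$: it must dominate the boundary data $2\|u_0\|_{\L^1}$ at $x=0$, decay at $-\infty$ with the correct Gaussian rate, and serve as a supersolution for both extremes $c=\pm L$ of the admissible drift; the choice above succeeds because it is precisely the heat-kernel evolution, in the moving frame $y=x+Lt$, of a boundary jump of the right size, and its monotonicity $\Phi_x>0$ automatically absorbs the unfavourable drift $c=-L$.
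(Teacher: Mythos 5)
Your argument is essentially the paper's own proof: your barrier $\Phi$ is exactly the function $Z^\ve$ used there (the heat evolution, in the frame moving with speed $-L$, of a jump of height $4\|u_0\|_{\L^1}$ placed at the origin), and the comparison in the left region followed by evaluation at $x=-Lt-\xi$ is the same. The only cosmetic difference is that the paper keeps the nonlinear flux $g(t,x,\omega)=f\bigl(t,x,\omega+\hat u^\ve\bigr)-f\bigl(t,x,\hat u^\ve\bigr)$ and uses $|g(t,x,\omega)|\le L|\omega|$ together with $Z^\ve_x\ge 0$, where you linearize by the mean value theorem; both exploit the same mechanism, namely that monotonicity of the barrier absorbs the unfavourable drift $-L$. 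One detail needs repair: you run the comparison on $\{x<-\delta_\nu\}$, whose lateral boundary sits at $x=-\delta_\nu$, but your barrier is anchored at $x=0$. Since $\Phi_x>0$ and
\[
\Phi(t,-\delta_\nu)~=~4\|u_0\|_{\L^1}\int_{(\delta_\nu-Lt)/\sqrt{\ve t}}^{+\infty}G(1,y)\,dy~<~2\|u_0\|_{\L^1}\qquad\text{when }Lt<\delta_\nu,
\]
the inequality $\Phi\pm W^\nu\ge 0$ on the parabolic boundary is not established for small times by the crude bound $|W^\nu|\le 2\|u_0\|_{\L^1}$. The fix is immediate: translate the barrier, taking $\Phi^\nu(t,x)\doteq 4\|u_0\|_{\L^1}\int_{-(x+\delta_\nu+Lt)/\sqrt{\ve t}}^{+\infty}G(1,y)\,dy$, which still solves $\Phi^\nu_t-L\Phi^\nu_x=\ve\Phi^\nu_{xx}$ and now satisfies $\Phi^\nu\ge 2\|u_0\|_{\L^1}$ for $x\ge-\delta_\nu$; the extra shift contributes $\delta_\nu/\sqrt{\ve t}$ to the lower limit of integration at $x=-Lt-\xi$ and disappears in the limit $\nu\to\infty$.
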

 
\v
\begin{proof} Using the same approximation argument
as in the proof of Theorem~\ref{th:comp2} we can assume that
both the fluxes and
the initial datum are smooth.
  Subtracting \eqref{CPh} from \eqref{CPe}
  one finds that the difference
  $w^{\varepsilon}=u^{\varepsilon}-\hat u^{\varepsilon}$ satisfies
 \[
    \begin{cases}
      w_{t}^{\varepsilon} + g(t,x,w^{\varepsilon})_{x}~=~\varepsilon
      w^{\varepsilon}_{xx}\,,\\
      w^{\varepsilon}(0,x)~=~0,
    \end{cases}
    \]
where the flux function is
\[
    g(t,x,\omega)~=~f\bigl(t,x,\omega+\hat u^{\varepsilon}(t,x)\bigr)
    -\hat f\bigl(t,x,\hat u^{\varepsilon}(t,x)\bigr).
\]
 The integrated function
\[
W^\ve(t,x)~=~\int_{-\infty}^x w^\ve(t,y)\, dy
\]
 thus satisfies
 \begin{equation}\label{We}
    \begin{cases}
      W_{t}^{\varepsilon} + g(t,x,W_x^{\varepsilon})~=~\varepsilon
      W^{\varepsilon}_{xx}\,,\\
      W^{\varepsilon}(0,x)~=~0.
    \end{cases}
  \end{equation}
  Consider the auxiliary function $Z=Z(t,x)$,
  defined as the solution to the Cauchy problem
 \begin{equation}\label{Ze} 
   \begin{cases}
     Z_{t}^{\varepsilon} - L\,Z^\ve_x=\varepsilon
     Z^{\varepsilon}_{xx},\\
     Z^{\varepsilon}\left(0,x\right)=4\left\|u_0 \right\|_{\L1}
     \chi_{\left[0,+\infty\right[}(x),
   \end{cases}\text{ i.e. }
   Z^{\varepsilon}\left(t,x\right)=4\left\|u_0 \right\|_{\L1}
   \int_{0}^{+\infty}G^{\varepsilon}\left(t,y-x-Lt\right)\;dy.
\end{equation}
 Observing that 
 \begin{displaymath}
   \begin{cases}
     |W^{\varepsilon}(t,x)|~
     \leq~2\|u_0 \|_{\L^1(\R)}&\text{ for all }t>0,\,x\in\mathbb{R}\,,\\[8pt]
     \bigl|g(t,x,\omega)\bigr|~\leq~L |\omega|&\text{ for all
     }t>0,\,x<0\,,\\[8pt]    
     Z^{\varepsilon}\left(t,x\right)\ge 2\left\|u_0 \right\|_{\L^1}
     &\text{ for all
     }t>0,\,x\ge 0\,,\\    
   \end{cases}
\end{displaymath}
we conclude that $Z^\ve$ satisfies $W^{\varepsilon}\left(0,x\right)
\le Z^{\varepsilon}\left(0,x\right)$ for all $x\in\mathbb{R}$ and
provides  a super-solution to~\eqref{We} in
the region $x<0$,
while it satisfies $W^{\varepsilon}\left(t,x\right)\le
Z^{\varepsilon}\left(t,x\right)$ in the region $x\ge 0$. Hence 
\[
W^\ve(t,x)~\leq~Z^\ve(t,x)\qquad\forall t>0, \;x\in\mathbb{R}.
\]
Exchanging the role of $u^{\varepsilon}$ and $\hat u^{\varepsilon}$ we
obtain $\left|W^{\varepsilon}(t,x)\right| \leq Z^\ve(t,x)$ for all
$t>0$, $x\in\mathbb{R}$ which coincides with \eqref{uhu} with
the substitution $x\to -Lt-\xi$.
\end{proof}

\section{The unique weak vanishing viscosity limit}  
\label{sec:uniqueness}
\setcounter{equation}{0}

Let $f=f(t,x,\omega)$ be a flux function satisfying {\bf (F1)}, {\bf (F2)}, and consider 
the domain
\begin{equation}\label{Ddef}
\D\,\doteq\,\left\{ u\in \L^1(\R)\,;~~u(x)\in [0,1]\quad\hbox{for all }~x\right\}.\end{equation}
Let an initial data $u_0 \in \D$ and a time interval $[a,b]$ be given.
For any $\ve>0$,  by {\bf (F2)} and the analysis in the previous section, the solution
$u^{\varepsilon}(t,x)$ to the Cauchy problem
  \begin{equation}\label{CP4}
 \begin{cases}      u_{t}+f\bigl(t,x,u\bigr)_{x}=~\ve
      u_{xx}\,,\\
      u(a,x)~=~u_0 (x)\,,
    \end{cases}
\end{equation}
  satisfies $u(t,\cdot)\in \D$ for all $t\in [a,b]$.

  We now consider a family of solutions $u^\ve$
  to the same Cauchy problem~\eqref{CP4},
for different values of the diffusion parameter $\ve>0$.
Since all these solutions are uniformly bounded,
we can extract  a decreasing sequence $\ve_n\to 0$ 
such that the corresponding solutions $u^{\ve_n}$
converge weakly to some function $u$.    
The main goal of this section is to find conditions on the flux function $f$ that yield the
uniqueness of the weak limit $u^{\ve_n}\wto u$, independently of the  particular sequence 
$\ve_n\to 0$.  

\begin{lemma}
    \label{lm:compact}
  Consider a flux $f=f(t,x,u)$ defined for $t\in[0,T]$, satisfying
  {\bf (F1)} and  {\bf (F2)} and let $u^{\varepsilon}$ be 
  solutions to \eqref{CP4} with a fixed initial datum $u_{o<}\in \mathcal{D}$ and
  $\varepsilon >0$. 
  Then, for any $t>0$:
  \begin{enumerate}[(i)]
  \item
    the set  $\left\{u^{\varepsilon}\left(t,\cdot\right)\right\}_{\varepsilon>0}$ is 
    relatively compact in the weak topology of
    $\L^{1}\left(\mathbb{R},\mathbb{R}\right)$;
  \item
    given a subsequence $u^{\varepsilon_{n}}$, one has
    \begin{equation}
      \label{eq:correspondence}
      u^{\varepsilon_{n}}\left(t,\cdot\right)\rightharpoonup u(t,\cdot) \quad
      \text{if and only if}\quad
      U^{\varepsilon_{n}}\left(t,\cdot\right)\to U\left(t,\cdot\right)
      \text{ uniformly in }\mathbb{R}
    \end{equation}
    with
    \begin{equation}
      \label{Un}
U^\ve(t,x)\,\doteq\,\int_{-\infty}^x u^\ve(t,y)\, dy,\qquad
 U(t,x)\,\doteq\,\int_{-\infty}^x u(t,y)\, dy;
\end{equation}
\item
  if $U^{\varepsilon}$ converges uniformly to $U$ in
  $\left[a,b\right]\times \mathbb{R}$ then the map $t\mapsto
  u(t,\cdot)$ is continuous from $\left[a,b\right]$ into
  $\L^{1}\left(\mathbb{R},\mathbb{R}\right)$ endowed with its weak
  topology. 
  \end{enumerate}
\end{lemma}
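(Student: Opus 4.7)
My plan is to dispatch the three parts in order, because (i) provides the weak compactness needed for the subsequence arguments in (ii) and (iii). Throughout I use without further comment the facts noted in the paragraph preceding the lemma: $u^{\ve}(t,\cdot)\in\D$ so $0\le u^{\ve}\le 1$, and by Theorem~\ref{th:comp1}(i) the $\L^{1}$-mass is conserved, $\|u^{\ve}(t,\cdot)\|_{\L^1}=\|u_0\|_{\L^1}$.

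For (i), the family $\{u^{\ve}(t,\cdot)\}_{\ve>0}$ is bounded in $\L^1\cap\L^{\infty}$ and is uniformly tight by Corollary~\ref{cor1bis}. Equi-integrability is immediate from the uniform $\L^{\infty}$ bound (for any measurable $E$, $\int_{E}u^{\ve}(t,\cdot)\le|E|$), so Dunford--Pettis yields weak relative compactness in $\L^{1}(\R)$.

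For the forward implication of (ii), testing the weak convergence against the bounded indicator $\chi_{(-\infty,x]}\in\L^{\infty}$ gives pointwise convergence $U^{\ve_{n}}(t,x)\to U(t,x)$ for every $x\in\R$. Because $0\le u^{\ve_{n}}\le 1$, the primitives $U^{\ve_{n}}(t,\cdot)$ are uniformly $1$-Lipschitz and uniformly bounded by $\|u_{0}\|_{\L^{1}}$, so Arzel\`a--Ascoli promotes the pointwise convergence to uniform convergence on compact intervals. To reach uniform convergence on all of $\R$, fix $\delta>0$ and use Corollary~\ref{cor1bis} to pick $M$ with $U^{\ve_{n}}(t,-M)<\delta$ and $\|u_0\|_{\L^{1}}-U^{\ve_{n}}(t,M)<\delta$ uniformly in $n$; since each $U^{\ve_{n}}(t,\cdot)$ is monotone non-decreasing, its oscillation outside $[-M,M]$ is at most $\delta$, and combining with uniform convergence on $[-M,M]$ finishes the argument. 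The reverse implication is a standard subsequence argument: given any subsequence of $u^{\ve_{n}}(t,\cdot)$, part (i) extracts a further subsequence converging weakly to some $\tilde u$; the forward direction forces its primitive to agree with the prescribed uniform limit $U$, so $\tilde u=u$, and hence the full sequence converges weakly to $u$.

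For (iii), the uniform limit $U$ is jointly continuous on $[a,b]\times\R$ and $1$-Lipschitz in $x$ (as a uniform limit of such), so $u(t,\cdot)\doteq U_{x}(t,\cdot)\in\D$ is well-defined for every $t\in[a,b]$, and by the backward direction of (ii) it equals the weak $\L^{1}$-limit of $u^{\ve}(t,\cdot)$ at each fixed $t$. Given $t_{n}\to t$, the family $\{u(t_{n},\cdot)\}$ is bounded in $\L^{1}\cap\L^{\infty}$ and inherits uniform tightness from Corollary~\ref{cor1bis} (the tail bound $\int_{|x|>M}u^{\ve}(s,x)\,dx<\delta$ passes to the weak limit as $\ve\to 0$, uniformly in $s$), so by (i) it is weakly relatively compact. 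Any weak cluster point $v$ of $\{u(t_{n},\cdot)\}$ has, by the forward direction of (ii), primitive equal to a pointwise limit of $U(t_{n_{k}},\cdot)$, which by joint continuity of $U$ equals $U(t,\cdot)$; hence $v=u(t,\cdot)$, and the whole sequence $u(t_{n},\cdot)\wto u(t,\cdot)$. The principal obstacle I anticipate is the forward direction of (ii), specifically translating the tightness afforded by Corollary~\ref{cor1bis} into the tail control that upgrades Arzel\`a--Ascoli on compact sets to uniform convergence on the whole line.
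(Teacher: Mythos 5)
Your proof is correct, and for parts (i) and (ii) it follows essentially the same route as the paper: Dunford--Pettis via the uniform $\L^{\infty}$ bound and the tightness of Corollary~\ref{cor1bis} for (i); pointwise convergence of the primitives upgraded by Arzel\`a--Ascoli and then by tail control to uniform convergence on all of $\R$ for the forward direction of (ii); and the standard weak-compactness cluster-point argument for the converse. The only genuine divergence is in part (iii). The paper proves weak continuity of $t\mapsto u(t,\cdot)$ by showing that $g_{\varphi}(t)=\int_{\mathbb{R}}\varphi(y)\,u(t,y)\,dy$ is continuous first for $\varphi=\chi_{]-\infty,x]}$ (from the joint continuity of the uniform limit $U$), then for $\varphi\in\L^{1}$ by approximating with piecewise constant functions and using $0\le u\le 1$, and finally for $\varphi\in\L^{\infty}$ using tightness. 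You instead take $t_{n}\to t$, observe that $\{u(t_{n},\cdot)\}$ inherits uniform integrability and tightness from the viscous approximations (both properties pass to the weak limit, since $\chi_{\{|x|>M\}}\in\L^{\infty}$ is an admissible test function), extract weakly convergent subsequences, and identify every cluster point with $u(t,\cdot)$ through its primitive and the joint continuity of $U$. Both arguments rest on the same two inputs, namely the continuity of $U$ and the uniform tightness; yours trades the paper's explicit density-of-test-functions argument for a compactness-plus-uniqueness argument, which is marginally slicker but requires the small extra observation, which you correctly supply, that the tail bound of Corollary~\ref{cor1bis} survives the passage to the weak limit.
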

\v

\begin{proof}
  The set
  $\left\{u^{\varepsilon}\left(t,\cdot\right)\right\}_{\varepsilon>0}$
  is bounded in $\L^{1}$ by $\left\|u_0 \right\|_{\L^{1}}$, it is
  uniformly integrable \cite[Chapter 5]{Royden} because it is bounded in $\L^{\infty}$ and 
  it is tight because of Corollary~\ref{cor1bis}.  
  Dunford--Pettis Theorem \cite[Theorem 247C]{Frebook}
  implies that it is weakly relatively compact in
  $\L^{1}$.
  
  Suppose $u^{\varepsilon_{n}}\left(t,\cdot\right)\rightharpoonup
  u(t,\cdot)$. Weak convergence of
  $u^{\varepsilon_{n}}\left(t,\cdot\right)$ implies pointwise
  convergence of
  $U^{\varepsilon_{n}}\left(t,\cdot\right)$ to $U(t,\cdot)$. Arzel\`a--Ascoli theorem
  implies the uniform convergence on compact sets.
  Fix $\delta>0$ ad using Corollary~\ref{cor1bis} choose $M>0$ such
  that
  \begin{displaymath}
    \int_{-\infty}^{-M}u^{\varepsilon}\left(t,x\right)\;dx
    +\int_{M}^{+\infty}u^{\varepsilon}\left(t,x\right)\;dx<\delta.
  \end{displaymath}
  This implies the inequalities
  \begin{displaymath}
    \begin{cases}
      \left|U^{\varepsilon}\left(t,x\right)-U\left(t,x\right)\right|\le
      2\delta&\text{ for }x\le -M,\\
      \left|U^{\varepsilon}\left(t,x\right)-U\left(t,x\right)\right|\le
      \left|U^{\varepsilon}\left(t,M\right)-U\left(t,M\right)\right|+
      2\delta&\text{ for }x\ge M,\\
    \end{cases}
  \end{displaymath}
  so that
  \begin{displaymath}
    \left\|U^{\varepsilon}\left(t,\cdot\right)-U\left(t,\cdot\right)
    \right\|_{\C^{0}\left(\mathbb{R}\right)}
    \le 2\delta + 
    \left\|U^{\varepsilon}\left(t,\cdot\right)-U\left(t,\cdot\right)
    \right\|_{\C^{0}\left(\left[-M,M\right]\right)}.
  \end{displaymath}
  This gives
  \begin{displaymath}
    \limsup_{n\to+\infty}
    \left\|U^{\varepsilon_{n}}\left(t,\cdot\right)-U\left(t,\cdot\right)
    \right\|_{\C^{0}\left(\mathbb{R}\right)}
    \le 2\delta,
  \end{displaymath}
  which proves the uniform convergence on all the real line 
  since  $\delta>0$ is arbitrary.

  Suppose now the uniform convergence of
  $U^{\varepsilon_{n}}\left(t,\cdot\right)$ to some function
  $U\left(t,\cdot\right)$. The sequence $u^{\varepsilon_{n}}(t,\cdot)$
  is weakly compact and if a subsequence converges weakly
  to some function $u(t,\cdot)$, it must coincide with
  $U_{x}\left(t,\cdot\right)$ because of the previous part.
  Hence all the sequence
  $u^{\varepsilon_{n}}\left(t,\cdot\right)$
  converges weakly to $u(t,\cdot)=U_{x}\left(t,\cdot\right)$. 

  Point \textit{(ii)} implies that the limit $U$ is given by~\eqref{Un}
  where $u(t,\cdot)$ is the weak limit of
  $u^{\varepsilon}\left(t,\cdot\right)$. Since
  $u^{\varepsilon}\in\C^{0}\left(\left[0,T\right],\L^{1}\left(\mathbb{R}\right)\right)$,
  $U^{\varepsilon}\left(t,x\right)$ is continuous w.r.t.~both its variables. 
  Uniform convergence implies the continuity of the limit $U\left(t,x\right)$ on both
  its variables. Therefore the map
  $g_{\varphi}(t)=\int_{\mathbb{R}}\varphi(y)
  u(t,y)\;dy$ is continuous if $\varphi =
  \chi_{\left]-\infty,x\right]}$ for any $x\in\mathbb{R}$.
  The bound $0\le u(t,x)\le 1$ allows us to get the continuity of
  $g_{\varphi}$ for any $\varphi\in\L^{1}$ by approximating it with
  integrable piecewise constant functions. Finally using
  Corollary~\ref{cor1bis} one can prove that $g_{\varphi}$ is
  continuous for any $\varphi\in\L^{\infty}$ proving the $\L^{1}$
  weak continuity.
\end{proof}

  \v
  
\begin{definition}
  \label{def:class_F} {\it 
  We denote by $\mathcal{F}_{[a,b]}$ the family  of all  fluxes $f=f(t,x,u)$ that
  satisfy {\bf (F1)}, {\bf (F2)}
  for $t\in[a,b]$, and for which the following property holds.
    For any initial data $u_0 \in\mathcal{D}$,  calling $u^\ve$ the solutions  
     to~the viscous Cauchy problem \eqref{CP4},  as $\ve\to 0$ the
    integrated functions
    \begin{displaymath}
      U^{\varepsilon}(t,x)=\int_{-\infty}^{x}u^{\varepsilon}\left(t,y\right)\;dy
    \end{displaymath}   
     converge uniformly in $\left[a,b\right]\times \mathbb{R}$ to a unique limit.}
\end{definition}
\v

By Lemma~\ref{lm:compact}, if $f\in\mathcal{F}_{\left[0,T\right]}$,
then as $\ve\to 0$ the solutions $u^{\varepsilon}(t,\cdot)$ of~\eqref{CPe}
converge
weakly to a unique limit $u(t,\cdot)$ in the weak topology of
$\L^{1}\left(\mathbb{R},\mathbb{R}\right)$ for any fixed
$t\in\left[0,T\right]$.
The map $t\mapsto
u(t,\cdot)$ is continuous from $\left[0,T\right]$ into $\L^{1}\left(
  \mathbb{R},\mathbb{R}\right)$ endowed with its weak topology.

Our eventual goal is to show that $\mathcal{F}_{\left[0,T\right]}$
contains a set of flux functions of the
form $f(t,x,\omega)= F(v(t,x),\omega)$, where $F$  is smooth and
$v=v(t,x)$ is a regulated function. 
In this direction, our  main tools are the following approximation results.

\begin{lemma}
  \label{lem:timeglue}
  Given two fluxes $f_{1}\in\mathcal{F}_{\left[a,c\right]}$ and
  $f_{2}\in\mathcal{F}_{\left[c,b\right]}$ with $0\le a<c<b$, then the
  flux $f$ defined by
  \begin{displaymath}
    f\left(t,x,\omega\right)
    =
    \begin{cases}
    f_{1}\left(t,x,\omega\right)&\text{ for }t\in\left[a,c\right]\\
    f_{2}\left(t,x,\omega\right)&\text{ for }t\in\left]c,b\right]      
    \end{cases}
  \end{displaymath}
  belongs to $\mathcal{F}_{\left[a,b\right]}$.
\end{lemma}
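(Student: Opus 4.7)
The plan is to split the interval $[a,b]$ at the gluing time $c$, apply the hypothesis $f_1\in\mathcal{F}_{[a,c]}$ on the first piece, transfer the resulting limit to $t=c$, and then use the comparison estimate of Theorem~\ref{th:comp2} to handle the $\varepsilon$-dependent ``initial condition'' that arises at $t=c$.

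First, fix $u_0\in\mathcal{D}$ and let $u^{\varepsilon}$ be the corresponding solution on $[a,b]$ with the glued flux $f$. Since $f\equiv f_1$ on $[a,c]$, the restriction of $u^{\varepsilon}$ to $[a,c]$ is the viscous solution associated with $f_1$ and initial datum $u_0$. Because $f_1\in\mathcal{F}_{[a,c]}$, the integrated functions $U^{\varepsilon}$ converge uniformly on $[a,c]\times\mathbb{R}$ to a unique limit $U$. Lemma~\ref{lm:compact} then identifies, for each $t\in[a,c]$, a weak $\L^1$ limit $u(t,\cdot)=U_{x}(t,\cdot)$ lying in $\mathcal{D}$; in particular $u(c,\cdot)\in\mathcal{D}$.

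Next, introduce the auxiliary family $\bar u^{\varepsilon}$ on $[c,b]$, defined as the solution of the parabolic equation with flux $f_2$ and the $\varepsilon$-independent initial datum $u(c,\cdot)$ at time $c$. Because $f_2\in\mathcal{F}_{[c,b]}$, the associated integrated functions $\bar U^{\varepsilon}(t,x)=\int_{-\infty}^{x}\bar u^{\varepsilon}(t,y)\,dy$ converge uniformly on $[c,b]\times\mathbb{R}$ to a limit $\bar U$. On $[c,b]$ both $u^{\varepsilon}$ and $\bar u^{\varepsilon}$ solve the same equation (with flux $f_2$) and both take values in $[0,1]$; they differ only through their data at $t=c$. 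Setting
\[
\bar\eta_\varepsilon~\doteq~\|U^{\varepsilon}(c,\cdot)-U(c,\cdot)\|_{\L^\infty(\mathbb{R})}~=~
\|U^{\varepsilon}(c,\cdot)-\bar U^{\varepsilon}(c,\cdot)\|_{\L^\infty(\mathbb{R})},
\]
step~1 yields $\bar\eta_\varepsilon\to 0$. Applying Theorem~\ref{th:comp2} twice, with $f^{\sharp}=f=f_2$, $\eta\equiv 0$, the range $[0,1]$ for the two solutions, and the two possible assignments of $u^\varepsilon$ and $\bar u^\varepsilon$ to the roles of $u,u^\sharp$, we obtain
\[
\bigl|U^{\varepsilon}(t,x)-\bar U^{\varepsilon}(t,x)\bigr|~\le~\bar\eta_\varepsilon
\qquad\text{for all }(t,x)\in[c,b]\times\mathbb{R}.
\]
Combining this with $\bar U^{\varepsilon}\to\bar U$ uniformly on $[c,b]\times\mathbb{R}$ gives $U^{\varepsilon}\to\bar U$ uniformly on the same set. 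The two limits agree at $t=c$, so concatenating with the uniform convergence on $[a,c]\times\mathbb{R}$ from step~1 produces uniform convergence of $U^{\varepsilon}$ on the full rectangle $[a,b]\times\mathbb{R}$, establishing $f\in\mathcal{F}_{[a,b]}$.

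The only conceptual obstacle is that we cannot invoke $f_2\in\mathcal{F}_{[c,b]}$ directly for $u^{\varepsilon}$ on $[c,b]$, since its datum $u^{\varepsilon}(c,\cdot)$ at $t=c$ depends on $\varepsilon$. The remedy is to compare $u^{\varepsilon}$ on $[c,b]$ not with another viscous solution of the same problem but with the auxiliary family $\bar u^{\varepsilon}$, which has a fixed, $\varepsilon$-independent initial condition $u(c,\cdot)$; the comparison Theorem~\ref{th:comp2} for integrated functions then absorbs the initial discrepancy into the term $\bar\eta_\varepsilon$, which vanishes thanks to the uniform convergence already obtained on $[a,c]$.
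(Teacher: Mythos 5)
Your proposal is correct and follows essentially the same route as the paper's own proof: restrict to $[a,c]$ to get the limit $U$, introduce the auxiliary viscous solutions on $[c,b]$ with the fixed ($\varepsilon$-independent) datum $u(c,\cdot)=U_x(c,\cdot)$, and use the comparison estimate of Theorem~\ref{th:comp2} with $\eta\equiv 0$ to absorb the $\varepsilon$-dependent discrepancy at $t=c$. The only cosmetic difference is that you phrase the error as a quantity $\bar\eta_\varepsilon\to 0$ while the paper runs the same argument with a fixed $\delta>0$ and a threshold $\varepsilon_o$; you also correctly note the point the paper leaves implicit, namely that $u(c,\cdot)\in\mathcal{D}$ so that $f_2\in\mathcal{F}_{[c,b]}$ is applicable.
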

\begin{proof}
Let an initial data
\begin{equation}  \label{uidn}
  u(0,\cdot)~=~u_0 ~\in \D
\end{equation}
be given.
For any  $\ve>0$, let $u^\ve$ 
be the corresponding solution to
\begin{equation}\label{tcpn} 
  u_t + f(t,x,u)_x~=~\ve u_{xx}\,,
\end{equation}
and define the integrated function
  $U^\ve(t,x)~=~\int_{-\infty}^x  u^\ve(t,y)dy$.
  Since in $\left[a,c\right[$ we have
  $f=f_{1}\in\mathcal{F}_{\left[a,c\right]}$,
 the limit 
$U~=~\lim_{\ve\to 0}  U^\ve$
is well defined in $\C^0(\left[a,c\right]\times\R)$ (we can change
$f$ into $f_{1}$ in $t=c$ without changing the solution at time $t=c$).

The uniform convergence implies that for any $\delta>0$, 
we can find $\ve_{o}>0$ such that 
\begin{equation}\label{comp3n}
U(c,x)-\delta~\leq~
U^\ve(c,x)~\leq~U(c,x)+\delta,\quad
\text{for all } x\in\mathbb{R} \text{ and }0<\ve<\ve_{o}.
\end{equation}
On the interval $[c, b]$,
consider the solution $\hat u^\ve$ to~\eqref{tcpn} with initial data
$\hat u^\ve(c,\cdot )~=~u(c,\cdot)$, where $u=U_{x}$ is the weak limit of
$u^{\varepsilon}$ in $\left[a,c\right]$.
The assumption $f_{2}\in \F_{[c, b]}$ implies that
the limit $\Hat U$ of the integrated functions 
$\Hat U^\ve\left(t,x\right)=\int_{-\infty}^{x}\hat u^{\varepsilon}(t,y)dy$
is well defined in $\C^0(\left[c,b\right]\times\R)$, so that, possibly
choosing a smaller $\varepsilon_{o}>0$ one has
\begin{displaymath}
  \Hat U(t,x)-\delta<\Hat U^\ve(t,x)~
  <\Hat
  U(t,x)+\delta\,\quad\text{ for all
  }\left(t,x\right)\in\left[c,b\right]
  \times\mathbb{R},\ 0<\varepsilon<\varepsilon_{o}.
\end{displaymath}

We now observe that, for $0<\ve<\ve_{o}$, $t\in\left[c,b\right]$
the functions $u^\ve$ satisfy the
same parabolic equation~\eqref{tcpn} as $\hat u^\ve$, with initial data
at $t=c$
respectively equal to $u^\ve\left(c,x\right)$ and $
u\left(c,x\right)$ whose integrated functions satisfy~\eqref{comp3n}.
By the comparison property proved in Theorem~\ref{th:comp2}, we
now obtain
for all $\varepsilon>0$ sufficiently small and
$(t,x)\in\left[c,b\right]\times\mathbb{R}$
\begin{displaymath}
  \Hat U(t,x)-2\delta<\Hat U^\ve(t,x)-\delta~\leq~
  U^\ve(t,x)~\leq~\Hat U^\ve(t,x)+\delta<\Hat
  U(t,x)+2\delta\,.
\end{displaymath}
Since $\delta>0$  was arbitrary, we thus conclude
that $U^{\varepsilon}$ converges to $\Hat U$ in
$\mathcal{C}^{0}\left(\left[c,b\right]\times \mathbb{R}\right)$.
\end{proof}

\v

\begin{theorem}
  \label{th:F_approx}
  Consider a flux $f=f(t,x,u)$ defined for $t\in[0,T]$, satisfying
  {\bf (F1)} and  {\bf (F2)}. Assume that, for any $\delta>0$,
  there exists times
  \[0<a_{1}<b_{1}< ~\cdots ~< a_{N}<b_{N}<T\]
  and flux functions $f_{i}\in\mathcal{F}_{[a_{i},b_{i}]}$
  such that  
  \begin{equation}\label{sbig} 
  T-\sum_{i=1}^{N} (b_i-a_i)~<~\delta\,,
  \end{equation} 
  and for  $i =1,\ldots,N$, 
  \begin{equation}
    \label{3.4}
    \left|f\left(t,x,\omega\right)-f_{i}\left(t,x,\omega\right)\right|<
    \delta
    \quad\text{ for any }\left(t,x,\omega\right)\in
    \left[a_{i},b_{i}\right]\times\mathbb{R}\times\left[0,1\right]\,.
  \end{equation}
  Then $f\in\mathcal{F}_{[0,T]}$.
\end{theorem}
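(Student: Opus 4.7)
The plan is to construct a flux $\tilde f$ that approximates $f$ up to controlled error and already lies in $\F_{[0,T]}$; the Hamilton--Jacobi comparison estimate Theorem~\ref{th:comp2} will then transfer the uniform convergence of integrated functions from $\tilde f$ to $f$, showing that $U^{\ve}$ is uniformly Cauchy on $[0,T]\times\R$ as $\ve\to 0$.

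The main obstacle is the construction of $\tilde f$, which must simultaneously preserve the structural hypotheses {\bf (F1)}, {\bf (F2)} and remain in $\F_{[0,T]}$. Given $\delta>0$, use the hypothesis to fix intervals $[a_i,b_i]$ and fluxes $f_i \in \F_{[a_i,b_i]}$ satisfying~\eqref{sbig}--\eqref{3.4}. Define $\tilde f = f_i$ on each $[a_i,b_i]$ and $\tilde f \equiv 0$ on the remaining ``gap'' intervals $[0,a_1]$, $[b_i,a_{i+1}]$, $[b_N,T]$. Then $\tilde f$ satisfies {\bf (F1)} and {\bf (F2)}: indeed $\tilde f(t,x,0)\equiv 0$ by construction, while $\tilde f(t,x,1)$ lies in $\L^{\infty}(]0,T[)$ since $\|f_i(\cdot,\cdot,1)\|_{\infty}\le \|h\|_{\infty}+\delta$ by~\eqref{3.4}. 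The zero flux belongs to $\F_{[a,b]}$ for every $0\le a<b$: the viscous equation reduces to the heat equation, so $u^{\ve}(t,\cdot)=G^{\ve}(t-a,\cdot)\ast u_0$, and a scaling argument $G^{\ve}(\tau,\cdot)=G^{1}(\ve\tau,\cdot)$ combined with continuity of translation in $\L^{1}$ gives $u^{\ve}(t,\cdot)\to u_0$ in $\L^1(\R)$ uniformly in $t\in[a,b]$ as $\ve\to 0$; Lemma~\ref{lm:compact} then yields the uniform convergence of the primitives. Iterating the gluing Lemma~\ref{lem:timeglue} over all $2N+1$ subintervals gives $\tilde f \in \F_{[0,T]}$.

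Fix now any initial datum $u_0 \in \D$, and let $u^{\ve}, \tilde u^{\ve}$ denote the solutions of~\eqref{CP4} with fluxes $f, \tilde f$ and common initial datum $u_0$, with primitives $U^{\ve}, \tilde U^{\ve}$. Let $M_0$ be a uniform bound on $|f - \tilde f|$ over $[0,T]\times\R\times[0,1]$; this is finite by {\bf (F1)}, {\bf (F2)} since both fluxes vanish at $\omega=0$ and are Lipschitz in $\omega$. Set $\eta(t)=\delta$ for $t\in\bigcup_i[a_i,b_i]$ and $\eta(t)=M_0$ on the gaps. By~\eqref{3.4} and the definition of $M_0$, $|f(t,x,\omega)-\tilde f(t,x,\omega)|\le\eta(t)$ for all $(t,x,\omega)\in[0,T]\times\R\times[0,1]$, so applying Theorem~\ref{th:comp2} in both directions (with $[a,b]=[0,1]$ containing the ranges of $u^{\ve}$ and $\tilde u^{\ve}$, and $\bar\eta=0$ since the initial data agree) yields
\[
\bigl\|U^{\ve} - \tilde U^{\ve}\bigr\|_{\C^0([0,T]\times\R)} ~\le~ \int_0^T\eta(s)\,ds ~\le~ \delta\sum_i(b_i-a_i) + M_0\,\delta ~\le~ (T+M_0)\,\delta
\]
uniformly in $\ve>0$. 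Since $\tilde f \in \F_{[0,T]}$, $\tilde U^{\ve}$ converges uniformly on $[0,T]\times\R$ as $\ve\to 0$, so the triangle inequality gives $\limsup_{\ve,\ve'\to 0}\|U^{\ve} - U^{\ve'}\|_{\C^0([0,T]\times\R)} \le 2(T+M_0)\delta$. As $\delta>0$ was arbitrary, $\{U^{\ve}\}$ is uniformly Cauchy on $[0,T]\times\R$ and hence converges uniformly to a unique limit. By Definition~\ref{def:class_F} this proves $f\in\F_{[0,T]}$.
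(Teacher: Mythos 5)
Your proposal is correct and follows essentially the same route as the paper's own proof: build the patched flux $\tilde f$ (equal to $f_i$ on $[a_i,b_i]$ and zero on the gaps), show it lies in $\F_{[0,T]}$ via repeated use of Lemma~\ref{lem:timeglue}, then use the two-sided comparison of Theorem~\ref{th:comp2} with a time-dependent error $\eta(t)$ that is small on the $[a_i,b_i]$ and merely bounded on the short gaps, and conclude by a Cauchy argument in $\delta$. The only cosmetic difference is that you bound $|f-\tilde f|$ by a global constant $M_0$ on the gaps where the paper uses the bound $L$ coming from~\eqref{fb}; both give an $O(\delta)$ contribution, and your added verifications (that $\tilde f$ satisfies \textbf{(F1)}--\textbf{(F2)} and that the zero flux lies in $\F_{[a,b]}$) are details the paper leaves implicit.
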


\v
\begin{proof}
\textbf{1.}   
Fix $\delta>0$ and choose time intervals $[a_i, b_i]$ and functions $f_i$
as in the assumptions of the theorem.
Consider the new flux function
\begin{equation}\label{tilf}\tilde f\left(t,x,\omega\right)\,=\,
\begin{cases}
    f_{i}\left(t,x,\omega\right)\qquad &\hbox{if}~t\in[a_{i}, b_{i}],\ i=1,\ldots,N,\\
      0\qquad &\text{otherwise.}
\end{cases}
  \end{equation}
We claim that $\tilde f\in \F_{[0,T]}$.
Indeed, since the flux identically zero belongs
trivially to $\mathcal{F}_{\left[\bar a,\bar b\right]}$ for any
$0\le\bar a < \bar b$, it is enough to apply repeatedly
Lemma~\ref{lem:timeglue}.

\v
\n {\bf 2.} 
Fix an initial data
$u_0 \in\D$ and call $u^{\varepsilon}$ and $\tilde u^{\varepsilon}$
respectively the solutions to the Cauchy problems
\begin{displaymath}
  \begin{cases}
    u_t + f(t,x,u)_x~=~\ve u_{xx},\\
    u(0,\cdot)= u_0 ,
  \end{cases}
  \quad\mbox{and}\qquad 
  \begin{cases}
    u_t + \tilde f(t,x,u)_x~=~\ve u_{xx},\\
    u(0,\cdot)= u_0 ,
  \end{cases}
\end{displaymath}
and $U^{\varepsilon}$ and $\Tilde U^{\varepsilon}$ their integrals:
\begin{displaymath}
  U^{\varepsilon}\left(t,x\right)=\int_{-\infty}^{x}u^{\varepsilon}
  \left(t,y\right)\;dy,\qquad
  \Tilde U^{\varepsilon}\left(t,x\right)=\int_{-\infty}^{x}\tilde u^{\varepsilon}
  \left(t,y\right)\;dy.
\end{displaymath}
From point \textbf{1.} we know that $\Tilde
U^{\varepsilon}$ converges in
$\mathcal{C}^{0}\left(\left[0.T\right]\times\mathbb{R}\right)$
to a unique limit $\Tilde
U$.

By the assumption {\bf (F2)} we have $f(t,x,0)=0$,
hence by (ii) in {\bf (F1)} it follows
the uniform bound
\begin{equation}\label{fb}
\bigl|f(t,x,\omega)\bigr|~\leq~L\qquad \forall (t,x,\omega)\in
[0,T]\times\R\times [0,1].
\end{equation}
We now introduce the error function
\begin{equation}\label{ef}
\eta(t)~\doteq~
\begin{cases}  \delta\quad&\hbox{if} \quad 
t\in [a_i, b_i], ~~i=1,\ldots,N,\\
L\quad &\hbox{otherwise.}
\end{cases}
\end{equation}

By the assumption~\eqref{3.4}, the two fluxes satisfy
\begin{displaymath}
  \tilde f\left(t,x,\omega\right)-\eta\left(t\right)
  \le f\left(t,x,\omega\right) \le \tilde f\left(t,x,\omega\right)+\eta(t)
\end{displaymath}
for all $(t,x,\omega)\in [0,T]\times\R\times [0,1]$. Since
$U\left(0,x\right)=\Tilde U(0,x)$, an application of
Theorem~\ref{th:comp2} gives
\begin{displaymath}
  \bigl| U^\ve(t,x)-\Tilde U^\ve(t,x)\bigr|~\leq~\int_0^t \eta(s)\,
  ds\le \int_0^T \eta(s)\,
  ds\le \delta\left(T+L\right),\quad
  \text{ for all }(t,x)\in\left[0,T\right]\times\mathbb{R}.
\end{displaymath}
For $\varepsilon,\sigma>0$, the previous inequality implies
\begin{displaymath}
  \begin{split}
    \left\|U^{\varepsilon}-U^{\sigma}
      \right\|_{\L^{\infty}} &\le
    \left\|U^{\varepsilon}-\Tilde U^{\varepsilon}
      \right\|_{\L^{\infty}}
    +\left\|\Tilde U^{\varepsilon}-\Tilde U^{\sigma}
      \right\|_{\L^{\infty}}
    +\left\|\Tilde U^{\sigma}- U^{\sigma}
      \right\|_{\L^{\infty}}\\
    &\le \left\|\Tilde U^{\varepsilon}-\Tilde U^{\sigma}
      \right\|_{\L^{\infty}} + 2 \delta \left(T+L\right),
  \end{split}
\end{displaymath}
where the $\L^{\infty}$ norms are taken over the set $\left[0,T\right]
\times \mathbb{R}$.
Since the limit $\Tilde U^\ve\to \Tilde U$ exists
in $\mathcal{C}^{0}\left(\left[0,T\right]\times\mathbb{R}\right)$,
taking the limit as
$\sigma,\varepsilon\to 0$ in the previous inequality, we obtain
\begin{displaymath}
   \limsup_{\sigma,\varepsilon\to 0} \left\|U^{\varepsilon}-U^{\sigma}
      \right\|_{\L^{\infty}}~ \le~ 2 \delta \left(T+L\right).
\end{displaymath}
Since $\delta>0$ was arbitrary, this
implies the existence (and uniqueness) of the limit
$\lim_{\varepsilon\to 0}U^{\varepsilon}$ in
$\mathcal{C}^{0}\left(\left[0,T\right]\times\mathbb{R}\right)$,
completing the proof.
\end{proof}
\v

As we will see, Theorem~\ref{th:F_approx} implies
that $\mathcal{F}_{\left[0,T\right]}$ contains a wide class of 
discontinuous flux functions.

By the classical result of Kruzhkov,  for conservation law with smooth flux
the vanishing viscosity 
limit exist and is unique~\cite{Bbook, Dafermos, K, Se}.
An extensive body of more recent literature has dealt with fluxes of the form
 \begin{displaymath}
      f(x,\omega)=
      \begin{cases}
        f_{l}(\omega)&\text{ for }x< 0,\\
        f_{r}(\omega)&\text{ for }x>0,
    \end{cases}
  \end{displaymath}
 assuming that  the left and right fluxes $f_{l}$ and $f_{r}$ are smooth functions such that
  \begin{equation}\label{frl}
    f_{l}(0)=f_{r}(0)=0,\qquad
    f_{l}(1)=f_{r}(1).
  \end{equation}
  In this case, one can again conclude that $f\in \F_{[0,T]}$, for every $T>0$. 
A detailed proof, based on the theory of nonlinear semigroups~\cite{Cra, CL}, can be found in~\cite{GS}.
The next lemma shows that the existence and uniqueness of the weak
limit also holds
when the interface  between the two fluxes varies in time, 
under mild regularity assumptions.

\begin{lemma}
\label{th:1jump}
  Let $f_{l}(u)$ and $f_{r}(u)$ be smooth functions satisfying~\eqref{frl}.
  Let $\gamma:[0,T]\mapsto\R$ be a Lipschitz function whose derivative 
  $\dot\gamma$ coincides a.e.~with a regulated function.
  Then the flux function $f$ defined by
\begin{equation}\label{fj}
      f(t,x,\omega)~=~
      \begin{cases}
        f_{l}(\omega)&\text{ if }~x\le \gamma\left(t\right),\\
        f_{r}(\omega)&\text{ if }~x>\gamma\left(t\right),
    \end{cases}
\end{equation}
  belongs to $\mathcal{F}_{\left[0,T\right]}$.
\end{lemma}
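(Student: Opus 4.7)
My plan is to reduce the moving-interface problem to the known stationary-interface case (the Garavello--Shen result cited just before the statement of the lemma) by straightening $\gamma$ through a Lipschitz change of coordinates, and then to handle the residual time dependence through $\dot\gamma$ by invoking Theorem~\ref{th:F_approx} together with the regulated-function hypothesis.

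First I would set $\tilde x\doteq x-\gamma(t)$ and $\tilde u(t,\tilde x)\doteq u(t,\tilde x+\gamma(t))$. A direct (formal) computation shows that $u$ solves the viscous equation with flux~\eqref{fj} if and only if $\tilde u$ solves the viscous equation with the new flux
\[
\tilde f(t,\tilde x,\omega)~=~\begin{cases} f_{l}(\omega)-\dot\gamma(t)\,\omega & \text{if }\tilde x\le 0,\\[1mm] f_{r}(\omega)-\dot\gamma(t)\,\omega & \text{if }\tilde x>0.\end{cases}
\]
Because $f_{l}(0)=f_{r}(0)=0$ and $f_{l}(1)=f_{r}(1)$, the shifted flux $\tilde f$ still satisfies {\bf (F1)} and {\bf (F2)} (with $h(t)=f_{l}(1)-\dot\gamma(t)$). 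Moreover the integrated functions are related by $\tilde U^{\varepsilon}(t,\tilde x)=U^{\varepsilon}(t,\tilde x+\gamma(t))$, and since $\gamma$ is bounded on $[0,T]$, uniform convergence of $\tilde U^{\varepsilon}$ on $[0,T]\times\R$ is equivalent to uniform convergence of $U^{\varepsilon}$ on the same set. Rigorously, this reduction should first be carried out for the smooth mollified fluxes constructed in~\eqref{eq:flux_approx} and then extended to mild solutions via Theorem~\ref{t:21}(ii).

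It thus suffices to show that $\tilde f\in\F_{[0,T]}$. Given $\delta>0$, since $\dot\gamma$ coincides a.e.~with a regulated function of a single variable, there exists a piecewise constant function $\chi_{\delta}$ with $\|\chi_{\delta}-\dot\gamma\|_{\L^{\infty}([0,T])}\le\delta$, taking constant values $c_{i}$ on a finite partition of $[0,T]$. I would select subintervals $a_{1}<b_{1}<\cdots<a_{N}<b_{N}$ slightly inside the pieces of constancy, so that the strict ordering required by Theorem~\ref{th:F_approx} is satisfied and the complement has measure less than $\delta$. On each such interval I consider the frozen flux
\[
f_{i}(\tilde x,\omega)~\doteq~\begin{cases} f_{l}(\omega)-c_{i}\,\omega & \text{if }\tilde x\le 0,\\[1mm] f_{r}(\omega)-c_{i}\,\omega & \text{if }\tilde x>0.\end{cases}
\]
Each $f_{i}$ is a time-independent, two-sided flux of the form~\eqref{frl} (the matching conditions at $\omega=0$ and $\omega=1$ survive the shift by $c_{i}\omega$), so by the stationary-interface result we have $f_{i}\in\F_{[a_{i},b_{i}]}$. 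The approximation estimate is immediate:
\[
\bigl|\tilde f(t,\tilde x,\omega)-f_{i}(\tilde x,\omega)\bigr|~=~|\dot\gamma(t)-c_{i}|\cdot|\omega|~\le~\delta\qquad\text{on }[a_{i},b_{i}]\times\R\times[0,1].
\]
Theorem~\ref{th:F_approx} then yields $\tilde f\in\F_{[0,T]}$, and undoing the change of variables gives $f\in\F_{[0,T]}$.

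The main delicate point I anticipate is the justification of the change of variables at the level of mild, as opposed to classical, solutions, and checking that it commutes cleanly with the uniform convergence of the integrated functions $U^{\varepsilon}$. Once this is in place, the remainder is a direct combination of two already-established ingredients: $\dot\gamma$ being regulated is precisely what allows uniform piecewise-constant approximation, and each frozen piece falls into the scope of the cited stationary-interface theorem.
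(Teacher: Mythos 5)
Your proposal is correct and follows essentially the same route as the paper's own proof: the change of variables $\tilde x=x-\gamma(t)$ producing the flux $f_{l/r}(\omega)-\dot\gamma(t)\,\omega$, the piecewise-constant approximation of the regulated derivative $\dot\gamma$, the appeal to the stationary-interface result of~\cite{GS} on each subinterval, and the conclusion via Theorem~\ref{th:F_approx}. The only difference is that you flag the mild-versus-classical justification of the coordinate change as a delicate point, which the paper passes over silently; your suggested remedy (carry it out for mollified fluxes and pass to the limit via Theorem~\ref{t:21}(ii)) is the natural one.
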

\v
\begin{proof}
  For any initial data $u_0 \in\mathcal{D}$, let $u^{\varepsilon}$ be the solution to
  \begin{displaymath}
    \begin{cases}
      u_{t}+f(t,x,u)_{x}~=~\varepsilon u_{xx}\,,\\
      u(0,x)~=~u_0  (x),
    \end{cases}
  \end{displaymath}
  and define
  \begin{displaymath}
    \tilde u^{\varepsilon}(t,x) ~\doteq~ u^{\varepsilon}\bigl(t,x+\gamma(t)\bigr).
  \end{displaymath}
  Then $\tilde u^{\varepsilon}\in \C^0\left(\left[0,T\right],
    \L^1(\R)\right)$ is a  solution to
  \begin{displaymath}
    \begin{cases}
      u_{t}+\tilde f\left(t,x,u\right)_{x}~=~\varepsilon u_{xx}\,,\\
      u(0,x)~=~u_0  \left(x+\gamma(0)\right)\,,
    \end{cases}
  \end{displaymath}
  where the new flux $\tilde f$, which also
  satisfies assumptions \textbf{(F1)} and \textbf{(F2)},
 is  
  \begin{displaymath}
    \tilde f(t,x,\omega)~=~
    \begin{cases}
     f_{l}(\omega)-\dot \gamma(t) \,\omega  &\text{ for }x< 0,\\
  f_{r}(\omega)-\dot \gamma(t) \,\omega &\text{ for }x> 0.
    \end{cases}
  \end{displaymath}
 Using the assumption that $\dot \gamma$ is a regulated function, 
for any $\delta>0$ we can find a piecewise constant function
$\chi:[0,T]\mapsto \R$ which satisfies $\|\chi - \dot\gamma\|_{\L^\infty[0,T]}<\delta$.
If $\eta_1,\ldots,\eta_N$ are the values of $\chi$, we can find disjoint subintervals
$[a_i, b_i]\subset [0,T]$ such that 
\[
\bigl|\chi(t)-\dot\gamma(t)\bigr|~=~\bigl|\eta_i-\dot\gamma(t)\bigr|~<~\delta
\qquad\forall t\in [a_i, b_i],~~i=1,\ldots,N,
\]
and
\[
T-\sum_{i=1}^N(b_i-a_i) ~<~\delta.
\]
Consider the fluxes
\begin{displaymath}
   f_i(t,x,\omega)~=~
    \begin{cases}
     f_{l}(\omega)-\eta_i \omega  &\text{ for }x< 0\,,\\
  f_{r}(\omega)-\eta_i \omega &\text{ for }x> 0\,.
    \end{cases}
  \end{displaymath}
By the result in~\cite{GS} it follows $f_i\in \F_{[a_i,b_i]}$ for all $i=1,\ldots,N$.
This shows that the flux function $\tilde f$ satisfies
all  the assumptions of Theorem~\ref{th:F_approx}. Hence $\tilde f\in
\F_{[0,T]}$ and the integrated function
  \[\Tilde U^{\varepsilon}\left(t,x\right)=\int_{-\infty}^{x}\tilde
  u^{\varepsilon}
  \left(t,y\right)dy\]
  converges uniformly on $\left[0,T\right]\times\mathbb{R}$.
  Therefore
  \[U^{\varepsilon}\left(t,x\right)=\int_{-\infty}^{x}
  u^{\varepsilon}
  \left(t,y\right)dy
  =\Tilde U^{\varepsilon}\left(t,x-\gamma(t)\right)\] converges
  uniformly too proving that $f\in\mathcal{F}_{\left[0,T\right]}$.
\end{proof}

  \v
  The next result shows that  functions in $\F_{[0,T]}$ can be patched
  together horizontally too, provided
  that they coincide on an intermediate domain.
  
\begin{lemma}\label{l:patch}
 Consider two flux functions $f_{1}$, $f_{2}$, both satisfying
  {\bf (F1)} and {\bf (F2)}. Assume that
  \begin{itemize}
  \item
    $f_{1},\ f_{2}\in\mathcal{F}_{\left[0,T\right]}$;
  \item  There exists $\alpha<\beta$ such that
    $f_{1}(t,x,\omega)=f_{2}(t,x,\omega)$ for
    all $t\in\left[0,T\right]$, $x\in ]\alpha,\beta[\,$, and $\omega\in[0,1]$. 
  \end{itemize}
  Then the flux $f$ defined by
  \begin{equation}
    \label{eq:composite_flux}
    f\left(t,x,\omega\right)~\doteq~
    \begin{cases}
      f_{1}\left(t,x,\omega\right)&\text{ if }x< \beta\\
      f_{2}\left(t,x,\omega\right)&\text{ if }x>\alpha   
    \end{cases}
  \end{equation}
  belongs to $\mathcal{F}_{\left[0,T\right]}$.
\end{lemma}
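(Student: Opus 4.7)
\emph{Strategy and reduction.} My plan combines Corollary~\ref{cor2} and a mirror-image version of it with the time-concatenation Lemma~\ref{lem:timeglue}. I set
\[
\tau\;:=\;\min\Bigl\{T,\ \frac{\beta-\alpha}{3L}\Bigr\},
\]
where $L$ is a common Lipschitz constant in \textbf{(F1)} for $f_{1}$ and $f_{2}$, and I aim to show $f\in\mathcal{F}_{[0,\tau]}$. The same argument applied (with time translation) on each subinterval $[k\tau,(k+1)\tau]$ of a partition of $[0,T]$, combined with iterated applications of Lemma~\ref{lem:timeglue}, then upgrades this to $f\in\mathcal{F}_{[0,T]}$, so it is enough to work on the single short interval $[0,\tau]$.

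\emph{Core estimate.} Fix $u_{0}\in\mathcal{D}$ and, for $\varepsilon>0$, let $u^{\varepsilon}$, $u_{1}^{\varepsilon}$, $u_{2}^{\varepsilon}$ be the viscous solutions on $[0,\tau]\times\R$ with respective fluxes $f$, $f_{1}$, $f_{2}$ and common initial datum $u_{0}$; denote by $U^{\varepsilon},U_{1}^{\varepsilon},U_{2}^{\varepsilon}$ their spatial integrals as in~\eqref{Un}. Since $f=f_{1}$ on $\{x<\beta\}$, shifting the spatial origin by $\beta$ and applying Corollary~\ref{cor2} yields, for any $\xi>0$ and every $(t,x)\in[0,\tau]\times\R$ with $x\le\beta-L\tau-\xi$,
\[
\bigl|U^{\varepsilon}(t,x)-U_{1}^{\varepsilon}(t,x)\bigr|\;\le\;4\|u_{0}\|_{\L^{1}}\int_{\xi/\sqrt{\tau\varepsilon}}^{+\infty}G(1,y)\,dy.
\]
By the mirror-image argument---rerunning the barrier construction of Corollary~\ref{cor2} with a right-moving $\tilde Z$ solving $\tilde Z_{t}+L\,\tilde Z_{x}=\varepsilon\tilde Z_{xx}$ with initial datum $4\|u_{0}\|_{\L^{1}}\chi_{(-\infty,0]}$, and using conservation of mass (Theorem~\ref{th:comp1}(i)) to turn the right-tail bound into a pointwise bound on $U^{\varepsilon}-U_{2}^{\varepsilon}$---the analogous inequality holds for $x\ge\alpha+L\tau+\xi$:
\[
\bigl|U^{\varepsilon}(t,x)-U_{2}^{\varepsilon}(t,x)\bigr|\;\le\;4\|u_{0}\|_{\L^{1}}\int_{\xi/\sqrt{\tau\varepsilon}}^{+\infty}G(1,y)\,dy.
\]
For every fixed $\xi>0$, both right-hand sides tend to $0$ as $\varepsilon\to 0$.

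\emph{Covering and Cauchy argument.} The condition $L\tau\le(\beta-\alpha)/3$ lets me choose $\xi>0$ so small that $\alpha+L\tau+\xi<\beta-L\tau-\xi$, so the half-lines $R_{1}=\{x\le\beta-L\tau-\xi\}$ and $R_{2}=\{x\ge\alpha+L\tau+\xi\}$ cover $\R$. For $(t,x)\in[0,\tau]\times R_{1}$ and $\varepsilon,\sigma>0$ I write
\[
|U^{\varepsilon}-U^{\sigma}|(t,x)\;\le\;|U^{\varepsilon}-U_{1}^{\varepsilon}|+|U_{1}^{\varepsilon}-U_{1}^{\sigma}|+|U_{1}^{\sigma}-U^{\sigma}|;
\]
the outer terms are controlled by the core estimate, while the middle one tends to $0$ in $\C^{0}([0,\tau]\times\R)$ as $\varepsilon,\sigma\to 0$ because $f_{1}\in\mathcal{F}_{[0,T]}\subseteq\mathcal{F}_{[0,\tau]}$. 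The analogous decomposition through $U_{2}^{\varepsilon}$ handles $(t,x)\in[0,\tau]\times R_{2}$. Hence $\{U^{\varepsilon}\}$ is Cauchy in $\C^{0}([0,\tau]\times\R)$, giving $f\in\mathcal{F}_{[0,\tau]}$, and Lemma~\ref{lem:timeglue} then yields $f\in\mathcal{F}_{[0,T]}$.

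\emph{Main obstacle.} The only nonroutine step is the mirror-image version of Corollary~\ref{cor2}, which is not stated in the paper. The cleanest route is either to redo the barrier argument with the reflected initial datum indicated above, or to apply Corollary~\ref{cor2} to the reflected problem $\tilde u(t,x)=u(t,-x)$ with flux $-f(t,-x,\omega)$, after checking that the reflection preserves \textbf{(F1)}--\textbf{(F2)} (up to sign of the boundary value $h(t)$). The remaining bookkeeping---choosing $\xi$ small and noting that the Gaussian tail is worst at $t=\tau$---is elementary.
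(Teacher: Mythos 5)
Your proof is correct and follows essentially the same route as the paper's: reduce to a time interval short enough that $L\tau$ is a definite fraction of $\beta-\alpha$ (then concatenate with Lemma~\ref{lem:timeglue}), control $U^\ve-U_1^\ve$ on a left half-line via Corollary~\ref{cor2} and $U^\ve-U_2^\ve$ on an overlapping right half-line via its mirror image, and conclude from the uniform convergence of $U_1^\ve$ and $U_2^\ve$. The only point where you are more explicit than the paper---which simply invokes ``an entirely similar estimate'' for the right half-line---is in spelling out the reflected barrier (or, equivalently, reflection plus conservation of mass) needed for that symmetric case, and your treatment of it is correct.
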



\begin{proof} 
It is clear that the
  patched flux $f$ also satisfies the assumptions {\bf (F1)} and {\bf (F2)}. 
  It is enough to prove the Lemma with $T< (\beta-\alpha)/4L$, and
  then apply repeatedly Lemma~\ref{lem:timeglue}. 
  For any $\ve>0$,  let $u^{\varepsilon}$ be the solution
  to~\eqref{CPe} with initial data $u_0 \in\mathcal{D}$,
  and let $u_{1}^{\varepsilon}$, $u_{2}^{\varepsilon}$ be the
  solutions to
  \begin{displaymath}
    \begin{cases}
      u_{t}+f_{1}\left(t,x,u\right)_{x}=\varepsilon u_{xx}\,,\\
      u(0)=u_0 ,
    \end{cases}\qquad 
    \begin{cases}
      u_{t}+f_{2}\left(t,x,u\right)_{x}=\varepsilon u_{xx}\,,\\
      u(0)=u_0 ,
    \end{cases}
  \end{displaymath}
  respectively. As usual, we denote by $U^\ve, U_1^\ve, U_2^\ve$ the 
  corresponding integrated functions.
  By hypothesis $U_1^\ve,
  U_2^\ve$ 
  converge uniformly on $\left[0,T\right]
  \times\mathbb{R}$,
  we need to prove that $U^\ve$ too converges uniformly.

  For any $x\in\left]-\infty,\left(\alpha+\beta\right)/2\right]$ and
  $t\in\left[0,T\right]$,
  define 
  \[\xi = \beta-Lt-x>\left(\beta-\alpha\right)/4>0.\]
  Since, for $x<\beta$, $f$ coincides with $f_{1}$,
  Corollary~\ref{cor2} gives the estimate
  \begin{displaymath}
    \begin{split}
      \left|U^\ve(t,x)- U_1^\ve(t,x)\right|&=
      \Big| U^\ve(t, \beta-Lt-\xi)- U_1^\ve(t, \beta-Lt-\xi)\Big|\\
      &=\left|\int_{-\infty}^{\beta-Lt-\xi} \bigl(u^\ve(t,x) -
        u_1^\ve(t,x)\bigr)\, dx \right|\\
      &\leq 4\|u_0 \|_{\L^1}\cdot \int_{\xi/\sqrt{t\varepsilon}}^{+\infty} G(1,y)\,
      dy\leq 4\| u_0 \|_{\L^1}\cdot
      \int_{\frac{\beta-\alpha}{4\sqrt{T\varepsilon}}}^{+\infty} G(1,y)\,
      dy.
    \end{split}
\end{displaymath}
This shows that the difference between
$U^{\varepsilon}$ and $U^{\varepsilon}_{1}$
converges to zero uniformly in
$\left[0,T\right]\times\left]-\infty,\left(\alpha+\beta\right)/2\right]$.
Since by hypothesis $U^{\varepsilon}_{1}$
converges uniformly  in that region, we obtain that
$U^{\varepsilon}\left(t,\cdot\right)$
too converges uniformly there.
An entirely similar estimate yields
the uniform convergence of $U^\ve$ in 
$\left[0,T\right]\times\left[\left(\alpha+\beta\right)/2,+\infty\right[$.
\end{proof}

\v

\begin{lemma}
  \label{lemma4}
  Let $f=f(t,x,\omega)$ be a flux function satisfying
  \textbf{(F1)}, \textbf{(F2)}.   Assume that, for every bounded
  interval $[x_1,x_2]$ the function
\begin{equation}\label{cfun}
\hat f(t,x,\omega)~=~\begin{cases}
f(t,x_1,\omega)\quad &\hbox{if}~~x<x_1\,,\\
f(t,x,\omega)\quad &\hbox{if}~~x\in[x_1, x_2]\,,\\
f(t,x_2,\omega)\quad &\hbox{if}~~x>x_2\,,
\end{cases}
\end{equation}
lies in $\F_{[0,T]}$.   Then $f\in \F_{[0,T]}$ as well.
\end{lemma}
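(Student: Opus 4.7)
The plan is to show that for every $\delta>0$ one can find a truncation $\hat f\in\F_{[0,T]}$ (supplied by the hypothesis) such that $\|U^\ve-\hat U^\ve\|_{\L^\infty([0,T]\times\R)}<C\delta$ for all sufficiently small $\ve>0$. Since the hypothesis guarantees that $\hat U^\ve$ converges uniformly on $[0,T]\times\R$, a triangle inequality will force $U^\ve$ to be uniformly Cauchy, which by Definition~\ref{def:class_F} is exactly $f\in\F_{[0,T]}$.

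Given $\delta>0$, I would first invoke Corollary~\ref{cor1bis} for both $u^\ve$ (flux $f$) and $\hat u^\ve$ (flux $\hat f$) to extract a single tightness radius $M=M(\delta)>0$; this works uniformly because the truncation $\hat f$ inherits \textbf{(F1)}, \textbf{(F2)} with the same Lipschitz constant $L$. I would then set $x_1=-M-LT-1$, $x_2=M+LT+1$, and let $\hat f$ be the truncation~\eqref{cfun} on $[x_1,x_2]$. By hypothesis $\hat f\in\F_{[0,T]}$, so $\hat U^\ve$ converges uniformly on $[0,T]\times\R$ to some $\hat U$.

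The crucial step is the comparison of $U^\ve$ and $\hat U^\ve$ on the middle strip $[-M,M]$. Since $f$ and $\hat f$ may differ on both tails $(-\infty,x_1)$ and $(x_2,+\infty)$, I would bridge them through an intermediate flux $\tilde f$ defined by $\tilde f(t,x,\omega)=f(t,x,\omega)$ for $x<x_2$ and $\tilde f(t,x,\omega)=\hat f(t,x,\omega)=f(t,x_2,\omega)$ for $x\ge x_2$; this $\tilde f$ still satisfies \textbf{(F1)}, \textbf{(F2)}. Then $\tilde f$ agrees with $f$ for $x<x_2$, while $\tilde f$ agrees with $\hat f$ for $x\ge x_1$. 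A translated version of Corollary~\ref{cor2} yields $|U^\ve-\tilde U^\ve|(t,x)\le 4\|u_0\|_{\L^1}\int_{1/\sqrt{T\ve}}^{+\infty}G(1,y)\,dy$ on $\{x\le x_2-LT-1=M\}$, and the right-sided analog of Corollary~\ref{cor2}---obtained by the reflection $x\mapsto-x$, which preserves the parabolic structure---gives the same Gaussian-tail bound on $|\tilde U^\ve-\hat U^\ve|$ for $\{x\ge x_1+LT+1=-M\}$. Combining these, $|U^\ve-\hat U^\ve|$ on $[0,T]\times[-M,M]$ is controlled by a quantity that vanishes as $\ve\to 0$, and so is $<\delta$ for all $\ve$ small.

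Outside the strip, tightness finishes the job: for $x\le-M$ both $U^\ve(t,x)$ and $\hat U^\ve(t,x)$ are bounded by the tail mass, while for $x\ge M$ mass conservation (Theorem~\ref{th:comp1}(i)) together with tightness places both within the tail mass of $\|u_0\|_{\L^1}$; in either case $|U^\ve-\hat U^\ve|<\delta$. Thus $\|U^\ve-\hat U^\ve\|_{\L^\infty([0,T]\times\R)}<C\delta$ for all $\ve$ sufficiently small, and the estimate $\|U^\ve-U^\sigma\|_\infty\le 2C\delta+\|\hat U^\ve-\hat U^\sigma\|_\infty$ combined with the known uniform convergence of $\hat U^\ve$ gives $\limsup_{\ve,\sigma\to 0}\|U^\ve-U^\sigma\|_\infty\le 2C\delta$; arbitrariness of $\delta$ concludes the proof. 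The main technical point is the right-sided version of Corollary~\ref{cor2} required for the second comparison: the paper states only the left-sided case, but as in the proof of Lemma~\ref{l:patch} it is supplied implicitly by the symmetric argument under $x\mapsto-x$, and this is the step that most needs careful verification.
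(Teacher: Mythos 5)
Your proposal is correct and follows essentially the same strategy as the paper's proof: fix $u_0\in\D$, take the tightness radius $M$ from Corollary~\ref{cor1bis}, truncate the flux outside $[-M-LT,M+LT]$, control $\bigl|U^\ve-\hat U^\ve\bigr|$ on the middle strip by the finite-propagation-speed comparison of Corollary~\ref{cor2}, handle the two tails by tightness and mass conservation, and conclude with a triangle inequality through the uniformly convergent $\hat U^\ve$. The only point where you go beyond the paper is the explicit intermediate flux $\tilde f$ (together with the extra margin in the choice of $x_1,x_2$) used to reduce the two-sided comparison to Corollary~\ref{cor2} and its reflection; the paper compresses exactly this step into ``the same argument as in the proof of Lemma~\ref{l:patch}'', and your version is a legitimate, slightly more careful way of carrying it out.
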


\begin{proof}
Consider any initial data $u_0 \in \D$.   Given $\delta>0$, choose 
a constant $M=M(\delta,u_0 ,L)$ as in Corollary~\ref{cor1bis}
and choose $x_1=-M-LT$ and $x_2=M+LT$ in~\eqref{cfun}.
Let $u^\ve, \hat u^\ve$ be the solutions to the Cauchy problems
 \begin{displaymath}
    \begin{cases}
      u_{t}+f(t,x,u)_{x}~=~\varepsilon u_{xx}\,,\\
      u(0)=u_0 ,
    \end{cases}\qquad 
    \begin{cases}
      u_{t}+\hat f(t,x,u)_{x}~=~\varepsilon u_{xx}\,,\\
      u(0)=u_0 ,
    \end{cases}
  \end{displaymath}
respectively.
Let $U^\ve, \Hat U^\ve$ be the corresponding integrated functions.

Since $\hat f\in \F_{[0,T]}$,
there exists the uniform limit $\lim_{\varepsilon\to 0}\Hat U^\ve
=\Hat U$. Since $f=\tilde f$ for $x\in\left[-M-LT,M+LT\right]$ the
same argument as in the proof of Lemma~\ref{l:patch} shows that
$U^{\varepsilon}$ converges to $\Hat U$ uniformly in
$\left[0,T\right]\times \left[-M,M\right]$. The choice of the constant
$M$ implies
\begin{displaymath}
  \begin{cases}
    U^{\varepsilon}\left(t,x\right)<\delta&\text{ for all
    }\left(t,x\right)\in
    \left[0,T\right]\times \left]-\infty,-M\right]\\
    U^{\varepsilon}\left(t,x\right)-U^{\varepsilon}\left(t,M\right)<\delta
    &\text{ for all
    }\left(t,x\right)\in
    \left[0,T\right]\times \left[M,+\infty\right[
  \end{cases}
\end{displaymath}
so that, for $\varepsilon,\sigma>0$
\begin{displaymath}
  \left\|U^{\varepsilon}-
    U^{\sigma}\right\|_{\C^{0}\left(\left[0,T\right]\times\mathbb{R}\right)}
 ~ \le~ 2\delta + 
  \left\|U^{\varepsilon}-
    U^{\sigma}\right\|_{\C^{0}\left(\left[0,T\right]\times\left[-M,M\right]\right)}
\end{displaymath}
and
\begin{displaymath}
  \limsup_{\sigma,\varepsilon\to 0}\left\|U^{\varepsilon}-
    U^{\sigma}\right\|_{\C^{0}\left(\left[0,T\right]\times\mathbb{R}\right)}
 ~ \le~ 2\delta.  
\end{displaymath}
Since $\delta>0$ was arbitrary, this  concludes the proof.
\end{proof}
\v
Combining the previous results, we can now prove the main theorem of this section.

\begin{theorem}\label{th:freg}  
 Let $f=f(t,x,\omega)$ be a flux function satisfying
 \textbf{(F3)}. 
      Then $f\in\F_{[0,T]}$. 
\end{theorem}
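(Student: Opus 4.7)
The plan is to combine Lemma~\ref{lemma4} (spatial localization), Theorem~\ref{th:F_approx} (temporal approximation), and an iterative construction built on Lemmas~\ref{th:1jump}, \ref{lem:timeglue}, and~\ref{l:patch}. First, by Lemma~\ref{lemma4} it is enough to prove that for every bounded interval $[x_1,x_2]$ the capped flux $\hat f$ defined in~\eqref{cfun} belongs to $\F_{[0,T]}$. To verify the hypotheses of Theorem~\ref{th:F_approx} for $\hat f$, fix $\delta>0$ and let $L'$ denote the Lipschitz constant of $F$ in its first argument. Apply Definition~\ref{def:1} to $v$ on the rectangle $[0,T]\times[x_1,x_2]$ with precision $\varepsilon\doteq\min\{\delta,\delta/L'\}$ to obtain disjoint sub-intervals $[a_i,b_i]$ with $T-\sum_i(b_i-a_i)\le\delta$, Lipschitz curves $\gamma_{i,k}$ with regulated time derivatives, constants $\alpha_{i,k}$, and step functions $\chi_i$ satisfying $\|\chi_i(t,\cdot)-v(t,\cdot)\|_{\L^\infty([x_1,x_2])}\le\varepsilon$ for $t\in[a_i,b_i]$. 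After a standard truncation one may assume the curves lie inside $(x_1,x_2)$; extending $\chi_i$ to all of $\R$ by the same step-function rule makes $f_i(t,x,\omega)\doteq F(\chi_i(t,x),\omega)$ satisfy $|f_i-\hat f|\le L'\varepsilon\le\delta$ on $[a_i,b_i]\times\R\times[0,1]$.

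The main step is then to prove $f_i\in\F_{[a_i,b_i]}$. Because the curves $\gamma_{i,1}<\cdots<\gamma_{i,N(i)}$ are Lipschitz with some common constant $K$ and are strictly ordered on the compact interval $[a_i,b_i]$, compactness yields a uniform gap $\eta>0$ with $\gamma_{i,k+1}(t)-\gamma_{i,k}(t)\ge\eta$. Partition $[a_i,b_i]$ into finitely many sub-intervals of length at most $\eta/(3K)$; on each such $[\tilde a,\tilde b]$ every curve varies by at most $\eta/3$, so $\max_{[\tilde a,\tilde b]}\gamma_{i,k}+\eta/3\le\min_{[\tilde a,\tilde b]}\gamma_{i,k+1}$, producing \emph{time-independent} spatial strips between consecutive jump curves. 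By Lemma~\ref{lem:timeglue} it suffices to prove $f_i\in\F_{[\tilde a,\tilde b]}$. Define the partial fluxes $\phi_k$ on $[\tilde a,\tilde b]\times\R$ which jump only along $\gamma_{i,1},\ldots,\gamma_{i,k}$ and equal $F(\alpha_{i,k},\omega)$ for $x>\gamma_{i,k}(t)$. The base case $\phi_0\equiv F(\alpha_{i,0},\omega)$ is a smooth conservation-law flux independent of $(t,x)$, hence lies in $\F_{[\tilde a,\tilde b]}$ by the classical Kruzhkov theory. For the inductive step, Lemma~\ref{th:1jump} places the single-jump flux $\bar\phi_{k+1}$ with jump along $\gamma_{i,k+1}$ between $F(\alpha_{i,k},\omega)$ and $F(\alpha_{i,k+1},\omega)$ in $\F_{[\tilde a,\tilde b]}$; the fluxes $\phi_k$ and $\bar\phi_{k+1}$ both equal $F(\alpha_{i,k},\omega)$ on the fixed strip between the two curves, so Lemma~\ref{l:patch} yields $\phi_{k+1}\in\F_{[\tilde a,\tilde b]}$. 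After $N(i)$ iterations one obtains $f_i=\phi_{N(i)}\in\F_{[\tilde a,\tilde b]}$.

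The hard part is this inductive patching: Lemma~\ref{th:1jump} handles only a single moving jump and Lemma~\ref{l:patch} requires a \emph{fixed} spatial strip on which the two fluxes coincide, so the time-subdivision argument that exploits the Lipschitz bound together with the uniform gap $\eta$ is essential. Once $f_i\in\F_{[a_i,b_i]}$ for every $i$, Theorem~\ref{th:F_approx} gives $\hat f\in\F_{[0,T]}$, and a final application of Lemma~\ref{lemma4} yields $f\in\F_{[0,T]}$.
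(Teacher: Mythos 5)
Your proposal is correct and follows essentially the same route as the paper: localize in space via Lemma~\ref{lemma4}, approximate $v$ by the step functions of Definition~\ref{def:1}, show each $f_i=F(\chi_i(t,x),\omega)$ lies in $\F_{[a_i,b_i]}$ by patching single-jump fluxes using Lemmas~\ref{th:1jump} and~\ref{l:patch}, and conclude with Theorem~\ref{th:F_approx}. Your time-subdivision argument, which produces fixed spatial strips between consecutive curves (combined with Lemma~\ref{lem:timeglue} to reassemble the time intervals), correctly supplies the detail that the paper compresses into the phrase ``by repeatedly applying Lemma~\ref{l:patch}.''
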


\begin{proof}
By the assumptions~\eqref{Fass}, the  flux function $f$ satisfies
  {\bf (F1)} and {\bf (F2)}.   

Fix an interval $[x_1,x_2]$.
Let $\delta>0$ be given.  Since $v$ is regulated
we can find disjoint intervals 
$[a_i, b_i]$,
Lipschitz continuous curves $\gamma_{i,j}$ and constants $\alpha_{i,j}$ 
such that all conditions \textit{(i)}--\textit{(iii)} in Definition~1 hold.

For each $i=1,\ldots,N$, let the piecewise constant function  
$\chi_i(t,x)$ be as in~\eqref{5}.
By repeatedly applying Lemma~\ref{l:patch}, we can show that the flux function
$$f_i(t,x,\omega)~\doteq~F(\chi_i(t,x), \omega)$$
lies in $\F_{[a_i, b_i]}$.   
%
In turn, an application of Theorem~\ref{th:F_approx}
shows that the function $\hat f$ in~\eqref{cfun}
lies in $\F_{[0,T]}$.
Since the interval $[x_1,x_2]$ is arbitrary, by Lemma~\ref{lemma4},
the flux function $f$ lies in $\F_{[0,T]}$ as well.
\end{proof}

\section{The strong vanishing viscosity limit}
\setcounter{equation}{0}
\label{sec:compensated}

In this section, we assume \textbf{(F3)}. Moreover
we consider the following additional hypotheses.
\begin{description}
\item[{\bf (V1)}] 
  $v(t,x)$ is a bounded measurable function whose total variation
  w.r.t.~$x$ is integrable. More precisely, for every rectangular
  domain of the form $\left[0,T\right]\times\left[x_{1},x_{2}\right]$
  one has
\begin{equation}
  \label{eq:V1}
  \int_{0}^{T}\left(\tv\left\{v\left(t,\cdot\right);\left[x_{1},x_{2}\right]\right\}\right)\;
  dt <+\infty.
\end{equation}
\item[{\bf (F4)}] 
  For each $\alpha\in\mathbb{R}$ the partial derivative
  $\omega\mapsto F_{\omega}\left(\alpha,\omega\right)$ is not constant
  on any open interval.
\end{description}
We prove that, under \textbf{(V1)}, the unique weak limit found in the
previous section is a solution to the conservation law
\begin{equation}
  \label{eq:cons_law}
  u_{t}+f\left(t,x,u\right)_{x}=0.
\end{equation}
Moreover, if we assume \textbf{(F4)} as well, the
convergence of $u^{\varepsilon}$ is in $\L^{1}\left(\left[0,T\right]\times
  \mathbb{R}\right)$. These results are obtained using a well established 
compensated compactness argument~\cite{Dafermos, Se}.

\v
For a decreasing sequence $\delta_{\nu}\to 0$,
together with the flux function $f$ in~\eqref{Fax} we also consider the mollified functions 
\begin{equation}
  \label{eq:moll_v}
f^\nu=f_{\delta_\nu},\quad f_\delta(t,x,u)=F(v_\delta(t,x),u),\quad
v_\delta(t,x) = \int_{\Omega} \rho_\delta(t-s)\rho_\delta(x-y)
v(s,y)\; dy\,ds. 
\end{equation}
Observe that, for every $\delta_\nu>0$,
the functions
$u_{*}(t,x)=0$ and $u^{*}(t,x)=1$ are solutions to $u_t +
f^{\nu}(t,x,u)_x~=\varepsilon u_{xx}$.
By the maximum principle and by
Theorem~\ref{t:21}, if we choose initial data
$
u_0 \in\mathcal{D}$  as in~\eqref{Ddef},
then the solution $u^{\varepsilon}(t,x)$ to~\eqref{CPe} satisfies
$u^{\varepsilon}(t,\cdot)\in\mathcal{D}$ for any $t\ge 0$.
Furthermore, by assumptions \textbf{(F3)} and \textbf{(V1)}, we have, 
for every $\delta, R>0$,
\begin{equation}\label{TVdelta}
\int_0^T \int_{-R}^R\;\; \sup_{\omega\in\left[0,1\right]}\bigl|  f_{\delta,x}(t,x,\omega)\bigr|\, dxdt~\leq~C_R\,,
\end{equation}
where $C_R$ is a constant depending only on $R$ and $f$ but not on $\delta$.

\v
Next, consider any smooth (not necessarily convex) entropy function $\eta=\eta(\omega)$ with
$\eta(0)=0$ and
define the corresponding
entropy flux
\begin{displaymath}
  q(t,x,\omega)~=~\int_0^{\omega} \eta'(\tilde \omega)\,
  f_{\omega}(t,x,\tilde \omega)\, d\tilde \omega\,.
\end{displaymath}
As in~\eqref{fLip}, let $L$ be a Lipschitz constant of $f$ w.r.t.~$\omega$. Then
\begin{displaymath}
  q_{\omega}(t,x,\omega)~=~\eta'(\omega)\, f_{\omega}(t,x,\omega)\,,\qquad
  \qquad \bigl|q(t,x,\omega)\bigr|~\le~ L\,\int_0^1 \bigl|\eta'(\tilde
  \omega)\bigr|\, d\tilde\omega .
\end{displaymath}

The following lemma provides the main step in the proof based on
compensated compactness.
\begin{lemma}
  \label{th:compact_estimate}
  Let the flux $f$ satisfy {\bf (F1)}, {\bf (F2)}, {\bf (F3)}
  and \textbf{(V1)}, and choose an
initial data $u_0 \in\mathcal{D}$. Then, given any decreasing
sequence $\varepsilon_{j}\to 0$, there exists
a compact set $\K\subset W^{-1,2}_{loc} (\Omega)$ such that all solutions
$u^{\varepsilon_{j}}$
to
\begin{equation}
  \label{eq:Cauchy_j}
  \left\{\bega{rl}
    u_{t}+f(t,x,u)_{x}&=~\varepsilon_{j}
    u_{xx}\,,\\[1mm]
    u(0,x)&=~u_0 (x)\,,
  \enda\right.
\end{equation}
with $0<\ve_{j}\leq 1$ satisfy
\begin{displaymath}
  \eta(u^{\varepsilon_{j}})_{t}+q(t,x,u^{\varepsilon_{j}})_{x}~\in~ \K.
\end{displaymath}
\end{lemma}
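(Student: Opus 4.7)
The plan is to invoke the Murat--Tartar compactness framework: splitting $\eta(u^{\varepsilon_j})_t + q(t,x,u^{\varepsilon_j})_x$ into a part that is compact in $W^{-1,2}_{loc}$ and a part that is bounded in $\mathcal{M}_{loc}$ (bounded measures on compact sets), and then using the uniform $L^\infty$--bound to upgrade this decomposition to compactness in $W^{-1,2}_{loc}$.

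First I would justify the computation at the smooth level using the mollified fluxes $f^\nu = f_{\delta_\nu}$ introduced in \eqref{eq:moll_v}: by Theorem~\ref{t:21}(ii) the corresponding solutions converge in $Y_T$, so it is enough to prove the estimate with uniform constants for the smooth approximations and pass to the limit. For each such smooth problem, multiplying the PDE by $\eta'(u^{\varepsilon_j})$ and using the chain rule identity $q(t,x,u)_x = q_x(t,x,u) + \eta'(u)\, f_\omega(t,x,u)\, u_x$ gives the entropy identity
\begin{equation*}
\eta(u^{\varepsilon_j})_t + q(t,x,u^{\varepsilon_j})_x ~=~ \varepsilon_j\, \eta(u^{\varepsilon_j})_{xx} \;-\; \varepsilon_j\, \eta''(u^{\varepsilon_j})\,(u^{\varepsilon_j}_x)^2 \;+\; q_x(t,x,u^{\varepsilon_j}) \;-\; \eta'(u^{\varepsilon_j})\, f_x(t,x,u^{\varepsilon_j}).
\end{equation*}

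Next I would bound each term. Testing the PDE against $u^{\varepsilon_j}$ (i.e., choosing the quadratic entropy) and using \textbf{(F2)}, \textbf{(F3)} together with \eqref{TVdelta} yields the standard energy estimate $\varepsilon_j \int_0^T\!\!\int_{\R} (u^{\varepsilon_j}_x)^2 \,dx\,dt \le C$. Consequently $\varepsilon_j \eta'(u^{\varepsilon_j}) u^{\varepsilon_j}_x$ is $O(\sqrt{\varepsilon_j})$ in $L^2_{loc}$, so $\varepsilon_j\, \eta(u^{\varepsilon_j})_{xx} \to 0$ strongly in $W^{-1,2}_{loc}$, giving a relatively compact family. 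The dissipation term $\varepsilon_j\, \eta''(u^{\varepsilon_j})(u^{\varepsilon_j}_x)^2$ is uniformly bounded in $L^1_{loc}$, hence a bounded family in $\mathcal{M}_{loc}$. Finally, since $f(t,x,\omega)=F(v(t,x),\omega)$ with $F$ Lipschitz in $\alpha$, one has $|f_x(t,x,u^{\varepsilon_j})| + |q_x(t,x,u^{\varepsilon_j})| \le C \,|v_x|$ (as measures in $x$), and \textbf{(V1)} gives uniform boundedness of these terms in $\mathcal{M}_{loc}$ on $\Omega$ after passing $\delta_\nu \to 0$.

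Now I would apply Murat's compensated compactness lemma: since the sum $\eta(u^{\varepsilon_j})_t + q(t,x,u^{\varepsilon_j})_x$ is bounded in $W^{-1,\infty}_{loc}$ (because $\eta(u^{\varepsilon_j})$ and $q(t,x,u^{\varepsilon_j})$ are uniformly bounded in $L^\infty$), and it is expressible as the sum of a compact family in $W^{-1,2}_{loc}$ and a bounded family in $\mathcal{M}_{loc}$ — which embeds compactly into $W^{-1,p}_{loc}$ for $1 < p < 2$ by Sobolev duality — Murat's lemma yields compactness in $W^{-1,2}_{loc}$.

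The main obstacle I anticipate is the rigorous handling of the ``lower order'' terms $q_x - \eta'(u) f_x$, since $f$ is only regulated in $(t,x)$: these derivatives do not exist classically, only as measures via $v_x$. I would circumvent this by deriving the estimate uniformly for the mollified fluxes $f^\nu$ (where everything is classical), using the mollifier-independent bound \eqref{TVdelta} coming from \textbf{(V1)}, and then passing to the limit $\nu \to \infty$ invoking Theorem~\ref{t:21}(ii) to identify the limit with $u^{\varepsilon_j}$.
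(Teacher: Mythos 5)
Your proposal follows essentially the same route as the paper's proof: mollify the flux via \eqref{eq:moll_v} and use Theorem~\ref{t:21}(ii) to pass to the limit, derive the entropy identity, obtain the energy estimate $\varepsilon\int_{\Omega'}(u^{\varepsilon,\nu}_x)^2\,dt\,dx\le C$ from the quadratic entropy together with \eqref{TVdelta}, bound the lower-order terms $q_x-\eta'(u)f_x$ and the dissipation in $\L^1_{loc}$ via \textbf{(V1)}, and conclude with the Murat-type lemma (Lemma~16.2.2 in \cite{Dafermos}). The one small imprecision is that the energy estimate is only local in $x$ (since \eqref{TVdelta} holds on $[-R,R]$, not on all of $\mathbb{R}$), but as the conclusion lives in $W^{-1,2}_{loc}(\Omega)$ this is harmless.
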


\begin{proof}
To simplify notations we drop the index $j$.
Consider the smooth solutions of the approximated equations
\begin{equation}
  \label{eq:vc1_approx}
  u^{\varepsilon,\nu}_t + f^{\nu}(t,x,u^{\varepsilon,\nu})_x~=~\varepsilon u^{\varepsilon,\nu}_{xx}\,.
\end{equation}
where $f^{\nu}$ is defined in~\eqref{eq:moll_v}.
Given an entropy  $\eta$, define the corresponding fluxes 
\begin{displaymath}
  q^{\nu}(t,x,\omega)\;\doteq\;\int_0^{\omega} \eta'(\tilde \omega)\,
  f_{\omega}^{\nu}(t,x,\tilde \omega)\, d\tilde
  \omega~=~\eta'\left(\omega\right)
  f^{\nu}\left(t,x,\omega\right)-\int_0^{\omega} \eta''(\tilde \omega)\,
  f^{\nu}(t,x,\tilde \omega)\, d\tilde
  \omega\,.
\end{displaymath}
Inequality~\eqref{TVdelta} implies a similar
estimate on the $\L^1$ norm of the partial derivative of $q^{\nu}$ w.r.t.~$x$,
namely
\begin{equation}
  \label{eq:bvqestimate}
  \int_0^T \int_{-R}^R\;\;\sup_{\omega\in\left[0,1\right]}\bigl|q^\nu_x(t,x,\omega)\bigr|\, dx dt~\leq~C'_R\,,
 \end{equation}
 where the constant $C'_R$ depends on $R$, $f$ and $\eta$ but not on $\nu$.
 
Since \eqref{eq:vc1_approx} is satisfied in a classical sense, we can
multiply both sides  by
$\eta'\bigl(u^{\varepsilon,\nu}\bigr)$ and use the chain rule to obtain
\begin{eqnarray}
&&\hspace{-2cm} 
    \eta(u^{\varepsilon,\nu})_t +
    q^{\nu}(t,x,u^{\varepsilon,\nu})_x +
    \eta'(u^{\varepsilon,\nu})f_{x}^{\nu}(t,x,u^{\varepsilon,\nu})
    -q_{x}^{\nu}(t,x,u^{\varepsilon,\nu})
 \nonumber   \\[1mm]
    &=&\varepsilon\eta(u^{\varepsilon,\nu})_{xx}-\varepsilon
    \eta''(u^{\varepsilon,\nu})\bigl(u^{\varepsilon,\nu}_{x})^{2}.
  \label{vc2}
\end{eqnarray}
Equation~\eqref{vc2} can be written as
\begin{equation}
  \label{eq:vc2_rewritten}
  \eta(u^{\varepsilon,\nu})_t +
  q^{\nu}(t,x,u^{\varepsilon,\nu})_x
~  = ~a^{\varepsilon,\nu}+b^{\varepsilon,\nu}+c^{\varepsilon,\nu}\,,
\end{equation}
with
\begin{equation}
  \label{eq:vcdefinitions}
  \begin{cases}
    a^{\varepsilon,\nu}~\doteq~
    -\eta'(u^{\varepsilon,\nu})f_{x}^{\nu}(t,x,u^{\varepsilon,\nu})+
    q_{x}^{\nu}(t,x,u^{\varepsilon,\nu}),\\
    b^{\varepsilon,\nu}~\doteq~-\varepsilon
    \eta''(u^{\varepsilon,\nu})\bigl(u^{\varepsilon,\nu}_{x}\bigr)^2,\\
    c^{\varepsilon,\nu}~\doteq~\varepsilon\eta(u^{\varepsilon,\nu})_{xx}.
  \end{cases}
\end{equation}
By Theorem~\ref{t:21} we have  $u^{\varepsilon,\nu}\to u^{\varepsilon}$ in
$Y_{T}$. In particular
\begin{displaymath}
  u^{\varepsilon,\nu}\to u^{\varepsilon}~~~ \text{ in }
\L^1(\Omega)~~~\text{ as }\nu\to+\infty,
\end{displaymath}
and since $\omega\mapsto q^{\nu}(t,x,\omega)$ is uniformly
Lipschitz, the same  argument  used in the proof of  \eqref{eq:smooth_approx2} 
now yields the convergence
$q^{\nu}(t,x,u^{\varepsilon,\nu})\to
q(t,x,u^{\varepsilon})$ in $\L^1(\Omega)$.
Hence we have the convergence
\begin{equation}\label{conv3}
  \begin{array}{rl}
\eta(u^{\varepsilon,\nu})_t +
    q^{\nu}(t,x,u^{\varepsilon,\nu})_x
    &\to ~ \eta(u^{\varepsilon})_t +
      q(t,x,u^{\varepsilon})_x\,,\\[1mm]
      \eta(u^{\varepsilon,\nu})_{xx}&\to~
\eta(u^{\varepsilon})_{xx}     \,,                                          
  \end{array}
\end{equation}
in the space of distributions.
Inserting~\eqref{conv3} in~\eqref{vc2}, one obtains
the convergence
  \begin{equation}
    \label{eq:limit_distribution}
    a^{\varepsilon,\nu}+b^{\varepsilon,\nu}~\to~ \eta(u^{\varepsilon})_t +
  q(t,x,u^{\varepsilon})_x -
  \varepsilon\eta(u^{\varepsilon})_{xx}~\doteq~ d^{\varepsilon}\,,
  \end{equation}
  again in the space of distributions.
  
Next, consider  any open set $\Omega'$ compactly contained in
$\Omega$, i.e. its closure
satisfies $\overline{\Omega'}\subset ~\, ]0,T[\;\times \;]-R,R[\;$ for
some $R>0$.
 Choose a test function $\phi(t,x)\in[0,1]$ with compact
support in $\left]0,T\right[\times \left]-R,R\right[$ and equal to $1$ on $\overline{\Omega'}$. Substitute
$\eta(s)={s^{2}}/{2}$ in~\eqref{vc2}, multiply by $\phi$,
integrate over $\Omega$,  then by parts and use~\eqref{TVdelta},
\eqref{eq:bvqestimate} to obtain
\begin{equation}
  \label{Cphi}
  \begin{split}
      &\varepsilon\int_{\Omega'}
      (u^{\varepsilon,\nu}_{x})^{2}\,dt\,dx 
      \le
    \int_{\Omega}\varepsilon
    (u^{\varepsilon,\nu}_{x})^{2}\phi\,dt\,dx\\
    &= \int_{\Omega}\bigg[\varepsilon\frac{1}{2}(u^{\varepsilon,\nu})^{2}\phi_{xx}+
    \frac{1}{2}(u^{\varepsilon,\nu})^{2}\phi_{t}+q^{\nu}(t,x,u^{\varepsilon,\nu})\phi_{x}\\
    &\qquad\qquad\qquad\qquad\qquad\qquad\qquad\qquad-
    u^{\varepsilon,\nu}f^{\nu}_{x}(t,x,u^{\varepsilon,\nu})\phi
     + \;q^{\nu}_{x}(t,x,u^{\varepsilon,\nu})\phi
    \bigg]\,dt\,dx\\
    &\le \int_{0}^{T}\int_{-R}^{R}
    \bigg[\frac{1}{2}\bigl|\phi_{xx}\bigr|+\frac{1}{2}\bigl|\phi_{t}\bigr|+\frac{L}{2}\bigl|\phi_{x}\bigr|
    +\Big(\bigl|f_{x}^{\nu}\left(t,x,u^{\varepsilon,\nu}\right)\bigr|+
    \bigl|q_{x}^{\nu}\left(t,x,u^{\varepsilon,\nu
      }\right)\bigr|\Big)\bigg]\,dt\,dx
    \\
    & \le C_\phi\,,
  \end{split}
\end{equation}
where $C_{\phi}$ is a constant which depends only on $\Omega'$ and $\phi$.
Therefore $\varepsilon(u^{\varepsilon,\nu}_{x})^{2}$
 is bounded in
$\L^1(\Omega')$ uniformly w.r.t.~$\nu$ and $\varepsilon$. Hence the
same holds for $b^{\varepsilon,\nu}$ as well. By~\eqref{TVdelta}
and~\eqref{eq:bvqestimate} it follows that $a^{\varepsilon,\nu}$ too is bounded
in $\L^1(\Omega')$, uniformly w.r.t.~$\nu$ and $\varepsilon$. Therefore
$a^{\varepsilon,\nu}+b^{\varepsilon,\nu}$ is uniformly bounded in
$\L^1(\Omega')$. This means that the distribution $d^{\varepsilon}$
in~\eqref{eq:limit_distribution} is a measure in $\Omega'$ uniformly
bounded w.r.t.~$\varepsilon$, i.e.~there exists a bounded set
$\A\subset\mathcal{M}(\Omega')$ in the space of bounded
measures in $\Omega'$ such that $d^{\varepsilon}\in \A$ for all $\varepsilon>0$.

For any $w\in W^{1,2}_0\left(\Omega'\right)$ with
$\left\|w\right\|_{W^{1,2}_0\left(\Omega'\right)}\le 1$
compute
\begin{eqnarray*}
    \int_{\Omega'}\varepsilon \eta\bigl(u^{\varepsilon,\nu}\bigr)_{xx}\, w\, dt\,dx&=&
    -\int_{\Omega'} \varepsilon \eta\bigl(u^{\varepsilon,\nu}\bigr)_x w_x\, dt\, dx\\
    &\leq&\varepsilon \|\eta_u\|_{\L^\infty}
    \left(\int_{\Omega'}\bigl(u^{\varepsilon,\nu}_{x}\bigr)^{2}\,dt\,dx\right)^{1/2} 
    \left(\int_{\Omega'} w_x^2\, dt\,dx\right)^{1/2} \\
    &\leq&\sqrt{\varepsilon} \|\eta_u\|_{\L^\infty}
    \left(\int_{\Omega'}\varepsilon\bigl(u^{\varepsilon,\nu}_{x}\bigr)^{2}\,
      dt\,dx\right)^{1/2} 
    \\
    &\leq&\sqrt{\varepsilon} \|\eta_u\|_{\L^\infty}
   \cdot (C_\phi)^{1/2},  
\end{eqnarray*}
by~\eqref{Cphi}.
This shows that $\varepsilon
\eta\bigl(u^{\varepsilon,\nu}\bigr)_{xx}\in
\sqrt{\varepsilon}B$, where $B$ is the closed ball in
$W^{-1,2}(\Omega')$ with radius $\|\eta_u\|_{\L^\infty}
    (C_\phi)^{1/2}$
    independent of $\varepsilon$ and $\nu$.
    Therefore we also have $\varepsilon\eta\bigl(u^{\varepsilon}\bigr)_{xx}\in
\sqrt{\varepsilon}B$. This implies that, as $\ve\to 0$, we have the convergence
$\varepsilon\eta(u^{\varepsilon})_{xx}
\to 0$ in $W^{-1,2}(\Omega')$.  In turn, this
implies  $\varepsilon\eta\bigl(u^{\varepsilon}\bigr)_{xx}\in \K_{1}$,  where
$\K_{1}$ is a fixed compact set in $W^{-1,2}(\Omega')$.
Finally from~\eqref{eq:limit_distribution} it follows
\begin{equation}
  \label{eq:final_inclusion}
      \eta(u^{\varepsilon})_t +
  q(t,x,u^{\varepsilon})_x ~=~ d^{\varepsilon}+
  \varepsilon\eta(u^{\varepsilon})_{xx}~\in ~\A + \K_{1}.
\end{equation}
Since the solutions $u^{\varepsilon}$ are uniformly bounded, the left hand side
of~\eqref{eq:final_inclusion} is uniformly bounded in $W^{-1,\infty}
(\Omega')$. The compactness
result stated in Lemma 16.2.2 of~\cite{Dafermos} implies
\begin{displaymath}
\eta(u^{\varepsilon})_{t} +
q(t,x,u^{\varepsilon})_{x}~\in~\hbox{compact set in }~W^{-1,2}
(\Omega')\,.
\end{displaymath}
\end{proof}
\v

We finally have the convergence theorem.

\begin{theorem}
    \label{th:strong_convergence}
Let the flux $f$ satisfy {\bf (F1)}, {\bf (F2)}, {\bf (F3)} and \textbf{(V1)},  and choose an
initial data $u_0 \in\mathcal{D}$. Let $u^{\varepsilon}$ be the
solution to the Cauchy problem~\eqref{eq:Cauchy_j}. Then the unique weak
viscosity limit
$u(t,\cdot)=\lim_{\varepsilon\to 0}u^{\varepsilon}\left(t,\cdot\right)$ is a solution
to the conservation law~\eqref{eq:cons_law}.

Moreover if the flux satisfies \textbf{(F4)} as well, then the convergence
$u^{\varepsilon}\to u$ is in $\L^{1}\left(\Omega\right)$ endowed with
its strong topology.
\end{theorem}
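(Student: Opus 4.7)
The plan is a classical compensated compactness argument combining Lemma~\ref{th:compact_estimate}, the Murat--Tartar div-curl lemma, and the Tartar--DiPerna structure theorem for Young measures associated with scalar conservation laws. First I would extract a Young measure: since $u^{\varepsilon}(t,\cdot)\in\mathcal{D}$ uniformly in $\varepsilon$, along a subsequence the family $\{u^{\varepsilon}\}$ generates a Young measure $(\mu_{t,x})_{(t,x)\in\Omega}$ of probability measures on $[0,1]$, so that for every Carath\'eodory function $\psi(t,x,\omega)$ bounded on $\Omega\times[0,1]$ one has
\[
\psi(t,x,u^{\varepsilon})~\rightharpoonup~\int_{[0,1]}\psi(t,x,\omega)\,d\mu_{t,x}(\omega)
\qquad\hbox{weakly in}~\L^{1}_{loc}(\Omega),
\]
and in particular $u=\langle\mu_{t,x},\omega\rangle$ coincides with the unique weak limit constructed in Section~\ref{sec:uniqueness}.

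Next I would derive the Tartar commutation. Fix smooth entropies $\eta_{1},\eta_{2}$ with $\eta_{i}(0)=0$ and the associated fluxes $q_{i}(t,x,\omega)=\int_{0}^{\omega}\eta_{i}'(\tilde\omega)f_{\omega}(t,x,\tilde\omega)\,d\tilde\omega$. Lemma~\ref{th:compact_estimate} places $\eta_{i}(u^{\varepsilon})_{t}+q_{i}(t,x,u^{\varepsilon})_{x}$ in a compact subset of $W^{-1,2}_{loc}(\Omega)$ for $i=1,2$, while the vectors themselves are uniformly bounded in $\L^{\infty}$. Applying the Murat--Tartar div-curl lemma to
\[
U^{\varepsilon}~=~\bigl(\eta_{1}(u^{\varepsilon}),\,q_{1}(t,x,u^{\varepsilon})\bigr),\qquad V^{\varepsilon}~=~\bigl(q_{2}(t,x,u^{\varepsilon}),\,-\eta_{2}(u^{\varepsilon})\bigr)
\]
and reading off the weak limits through the Young measure gives, for a.e.~$(t,x)\in\Omega$, the commutation
\[
\bigl\langle\mu_{t,x},\eta_{1}q_{2}-\eta_{2}q_{1}\bigr\rangle~=~\bigl\langle\mu_{t,x},\eta_{1}\bigr\rangle\bigl\langle\mu_{t,x},q_{2}\bigr\rangle-\bigl\langle\mu_{t,x},\eta_{2}\bigr\rangle\bigl\langle\mu_{t,x},q_{1}\bigr\rangle,
\]
with $q_{i}$ evaluated at $\alpha=v(t,x)$. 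This is exactly the Tartar relation for the autonomous scalar law $u_{t}+F(\alpha,u)_{x}=0$ with the parameter frozen to $\alpha=v(t,x)$.

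I would then apply the classical scalar-case structure theorem (see Chapter~16 of~\cite{Dafermos}): at a.e.~$(t,x)$ the measure $\mu_{t,x}$ is either the Dirac mass $\delta_{u(t,x)}$ or is supported in a maximal closed interval on which $\omega\mapsto F(v(t,x),\omega)$ is affine. In both cases an affine function commutes with the barycenter, so
\[
\int_{[0,1]}F(v(t,x),\omega)\,d\mu_{t,x}(\omega)~=~F\bigl(v(t,x),u(t,x)\bigr)~=~f\bigl(t,x,u(t,x)\bigr).
\]
Passing to the weak limit $\varepsilon\to 0$ in the distributional formulation of~\eqref{eq:Cauchy_j} and using this identity to close the flux term, one concludes that $u$ satisfies~\eqref{eq:cons_law} in the sense of distributions, proving the first assertion. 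Under the extra assumption {\bf (F4)} the affine alternative is excluded, so $\mu_{t,x}=\delta_{u(t,x)}$ almost everywhere; hence $u^{\varepsilon}\to u$ almost everywhere in $\Omega$, and together with the uniform $\L^{\infty}$ bound and the tightness from Corollary~\ref{cor1bis}, dominated convergence upgrades this to strong convergence in $\L^{1}(\Omega)$.

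The main difficulty I expect is justifying the Young-measure representation of the weak limits of $q_{i}(t,x,u^{\varepsilon})$ together with the div-curl step, since the flux genuinely depends on $(t,x)$ through the merely regulated --- hence possibly discontinuous --- function $v(t,x)$. This is resolved by observing that $q_{i}(t,x,\omega)$ is still a Carath\'eodory function (Lipschitz in $\alpha=v(t,x)$, smooth in $\omega$, bounded on $\Omega\times[0,1]$), which is precisely the regularity demanded by the standard Young-measure and Murat--Tartar machinery applied above.
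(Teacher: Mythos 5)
Your proposal is correct in substance and runs on the same compensated--compactness engine as the paper --- Lemma~\ref{th:compact_estimate}, the div--curl lemma, and the Jensen/Cauchy--Schwarz defect that forces the flux to commute with the weak limit --- but it packages the argument through Young measures and the pointwise Tartar structure theorem, whereas the paper never introduces a Young measure: it works directly with the finitely many weak$^{*}$ limits it needs ($\bar u$, $\bar f$, $\bar I$, $\bar f_{(\tau,y)}$, $\bar q_{(\tau,y)}$) and replaces the appeal to the structure theorem by an explicit computation with the defect functional $I(t,x,v,w)$ of~\eqref{eq:jensen_def}. The one step where the two treatments genuinely diverge is how the rough $(t,x)$-dependence of the flux is absorbed. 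Tartar's key entropy is the flux itself, which here depends on $(t,x)$ and hence is not an admissible entropy for Lemma~\ref{th:compact_estimate}; the paper freezes it at an auxiliary point $(\tau,y)$, derives~\eqref{eq:q2_and_I} with an error controlled by $\sup_{\omega\in[0,1]}\bigl|f(\tau,y,\omega)-f(t,x,\omega)\bigr|$, and removes the error by a Lebesgue-point argument in $(\tau,y)$. In your version this same difficulty resurfaces as the need to have the commutation relation at a.e.~$(t,x)$ simultaneously for \emph{all} entropy pairs --- in particular for the $(t,x)$-dependent choice $\eta(\omega)=F\bigl(v(t,x),\omega\bigr)$ --- which you should justify by running the div--curl step over a countable dense family of entropies and extending by continuity of $\eta\mapsto\bigl(\eta,\,q(t,x,\cdot)\bigr)$ from $\C^{1}$ to $\C^{0}$ at each fixed $(t,x)$; your Carath\'eodory remark handles the Young-measure representation of the weak limits but not this uniformity in the entropy. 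With that (standard) point made explicit your argument closes; it buys a shorter derivation of the first assertion by quoting the structure theorem, at the price of heavier machinery than the paper's self-contained estimate. Finally, $\mu_{t,x}=\delta_{u(t,x)}$ a.e.~gives convergence in measure, hence a.e.~convergence only along a subsequence; as in the paper, uniqueness of the limit together with dominated convergence and Corollary~\ref{cor1bis} then upgrades the full family to strong convergence in $\L^{1}(\Omega)$.
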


\begin{proof}
{\bf 1.}
  For any $(t,x)\in\Omega$ and $v,w\in[0,1]$ define
  \begin{equation}
    \label{eq:jensen_def}
    I(t,x,v,w)~\doteq~(v-w)\int_{w}^{v}\bigl[f_{\omega}
    (t,x,\omega)\bigr]^{2}\,d\omega
    -\bigl[f(t,x,v)-f(t,x,w)\bigr]^{2}.
  \end{equation}
  The following properties hold.
  \begi
  \item[(i)]
    $(v,w)\mapsto I(t,x,v,w)$ is continuous with
    $I(t,x,v,v)=0$ for any $v\in\bigl[0,1\bigr]$.
  \item[(ii)]
    $I(t,x,v,w)\ge 0$ for any $v,w\in\bigl[0,1\bigr]$.
  \item[(iii)]
    If \textbf{(F4)} holds, $I(t,x,v,w)>0$ for any $v,w\in\bigl[0,1\bigr]$ with $v\not=w$.
  \endi
  Indeed, (i) is trivial, while (ii) and (iii) follow from Jensen's
  inequality and hypothesis \textbf{(F4)}.
  Indeed, for the proof of (iii) suppose $w<v$
  (for the proof of (ii) substitute in the following inequality $>$ with $\ge$). 
  Since \textbf{(F4)} implies that $f_{\omega}(t,x,\omega)$ is not
  constant  over the interval $\omega\in[w,v]$, we have  
  \begin{align*}
    I(t,x,v,w)~&  =~(v-w)\int_w^v\bigl[f_{\omega}(t,x,\omega)\bigr]^{2}\,d\omega
    -(v-w)^{2}\left[\frac{1}{v-w}\int_w^v
    f_{\omega}(t,x,\omega)\,d\omega\right]^{2}\\
               &>~(v-w)\int_w^v\bigl[f_{\omega}
                 (t,x,\omega)\bigr]^{2}\,d\omega
                 -(v-w)^{2}\frac{1}{v-w}\int_w^v
                 \bigl[f_{\omega}(t,x,\omega)\bigr]^{2}\,d\omega\\
                           &=~0.
  \end{align*}
  \v
 \n {\bf 2.}
  In order to apply Lemma~\ref{th:compact_estimate}, fix
  $(\tau,y)\in\Omega$ and consider the following entropies and
  corresponding fluxes
  \begin{align*}
    \eta(\omega)&=\omega, &q(t,x,\omega)&=~f(t,x,\omega),\\
    \eta_{\left(\tau,y\right)}(\omega)&=f(\tau,y,\omega),&
    q_{\left(\tau,y\right)}(t,x, \omega)
                                            &=\int_{0}^{
                                              \omega}f_{\omega}(\tau,y,\tilde
                                              \omega)
                                              f_{\omega}(t,x,\tilde \omega)\,d\tilde\omega.
  \end{align*}
We claim that there exists a constant $C_{2}\ge 0$ such that
  \begin{multline}
    \label{eq:q2_and_I}
    (v-w)\bigl[q_{\left(\tau,y\right)}(t,x,v)-q_{\left(\tau,y\right)}(t,x,w)\bigr]\\\ge~
    I(t,x,v,w)+\bigl[f(t,x,v)-f(t,x,w)\bigr]^{2}
                      -C_{2}\sup_{\omega\in[0,1]}
                      \bigl|f(\tau,y,\omega)-f(t,x,\omega)\bigr|.
  \end{multline}
  Indeed, assume $w<v$. Using {\bf (F3)} we compute                  
 \[\bega{l}
    (v-w)\bigl(q_{\left(\tau,y\right)}(t,x,v)-q_{\left(\tau,y\right)}(t,x,w)\bigr)\ds
                     ~ =~(v-w)\int_{w}^{v}f_{\omega}(\tau,y,\omega)f_{\omega}(t,x,\omega)\,d\omega\\[1mm]
                  \ds  =~(v-w)\int_{w}^{v}\bigl[f_{\omega}(t,x,\omega)\bigr]^{2}\,d\omega
                  +(v-w)\int_{w}^{v}\bigl[f_{\omega}(\tau,y,\omega)-f_{\omega}(t,x,\omega)\bigr]
                  f_{\omega}(t,x,\omega)\,d\omega\\[1mm]
                    =~I(t,x,v,w)+\bigl[f(t,x,v)-f(t,x,w)\bigr]^{2}\\[1mm]
                      \quad+(v-w)\bigg[\bigl(f(\tau,y,v)-f(t,x,v)\bigr)f_{\omega}(t,x,v)
                      -\bigl[f(\tau,y,w)-f(t,x,w)\bigr]f_{\omega}(t,x,w)\\[1mm]
                 \ds   \quad -\int_{w}^{v}\bigl(f(\tau,y,\omega)-f(t,x,\omega)\bigr)f_{\omega\omega}(t,x,\omega)\,d\omega\bigg]\\  [1mm]
                    \ge~\ds
                      I(t,x,v,w)+\bigl[f(t,x,v)-f(t,x,w)\bigr]^{2}
                      -(2L+L_{2})\sup_{\omega\in[0,1]}
                      \bigl|f(\tau,y,\omega)-f(t,x,\omega)\bigr|.
  \enda
  \]
  Here the constants $L$ and $L_2$ provide upper bounds for 
  $|f_u|$ and $|f_{uu}|$, respectively.
  \v
 \n {\bf 3.}
 Let $(u^{\varepsilon_{j}})_{j\geq 1}$ be a sequence of solutions
 to~\eqref{eq:Cauchy_j} with $\varepsilon_{j}\to 0$.
 By possibly taking a subsequence and dropping the index $j$ to
  simplify the notations, we can achieve the following weak$^{*}$ convergences in
  $\L^{\infty}(\Omega)$:
  \begin{equation}
    \label{eq:weak_limits}
    \begin{cases}
    \displaystyle    u^{\varepsilon}(t,x) ~\overset{*}{\rightharpoonup}~ \bar u(t,x),\\[1mm]
 \displaystyle   f\bigl(t,x,u^{\varepsilon}(t,x)\bigr)~\overset{*}\rightharpoonup~
                                             \bar f(t,x),\\[1mm]
\displaystyle    I\bigl(t,x,u^{\varepsilon}(t,x),\bar
    u(t,x)\bigr)~\overset{*}\rightharpoonup~
                              \bar I(t,x).
\end{cases}
  \end{equation}
  Taking further subsequences (which this time may depend on
  $\left(\tau,y\right)$) we can achieve these further weak$^{*}$ convergences in
  $\L^{\infty}(\Omega)$ 
  \begin{align}
    \label{eq:weak_limits_bis}
    f\bigl(\tau,y,u^{\varepsilon}(t,x)\bigr)&~\overset{*}\rightharpoonup~ \bar f_{\left(\tau,y\right)}(t,x),
    &q_{\left(\tau,y\right)}
      \bigl(t,x,u^{\varepsilon}(t,x)\bigr)&~\overset{*}\rightharpoonup~
                                                          \bar
                 q_{\left(\tau,y\right)}(t,x).
  \end{align}
  Notice that the weak limits $\bar u$, $\bar f$, $\bar I$ in~\eqref{eq:weak_limits}
  do not
  depend on the point $\left(\tau,y\right)$. Moreover the weak limit
  $\bar u$ is unique (independent of the sequence $\varepsilon_{j}$)
  because of Theorem~\ref{th:freg} and it satisfies
  the conservation law
  \begin{equation}
    \label{eq:weak_CL}
    \bar u_{t}+\bar f\left(t,x\right)_{x}=0.
  \end{equation}
  Theorem~\ref{th:compact_estimate} now implies
  \begin{displaymath}
    u^{\varepsilon}(t,x)_{t}+f\bigl(t,x,u^{\varepsilon}(t,x)\bigr)_{x}~\in~\K\,
    ,\qquad
    f\bigl(\tau,y,u^{\varepsilon}(t,x)\bigr)_{t}+q_{\left(\tau,y\right)}\bigl(t,x,u^{\varepsilon}(t,x)\bigr)_{x}~\in~ \K,
  \end{displaymath}
  where $\K$ is a compact set in
  $W_{loc}^{-1,2}(\Omega,\mathbb{R})$. By an application of the
  \emph{div--curl lemma}, see for example  Theorem~16.2.1 in~\cite{Dafermos}, 
 one obtains
  \begin{equation}\label{eq:divcurl_result}\bega{l}
    u^{\varepsilon}(t,x)q_{\left(\tau,y\right)}\bigl(t,x,u^{\varepsilon}(t,x)\bigr)
    -f\bigl(t,x,u^{\varepsilon}(t,x)\bigr)f\bigl(\tau,y,u^{\varepsilon}(t,x)\bigr)\\[3mm]
  \qquad\qquad   \overset{*}{\rightharpoonup}~
    \bar u(t,x)\bar q_{\left(\tau,y\right)}(t,x)-
    \bar f(t,x)\bar f_{\left(\tau,y\right)}(t,x).
  \enda\end{equation}
    Setting $v=u^{\varepsilon}(t,x)$ and $w=\bar u(t,x)$
    in \eqref{eq:q2_and_I} we obtain
\begin{eqnarray*}
  &&\hspace{-2cm}   I\bigl(t,x,u^{\varepsilon}(t,x),\bar u(t,x)\bigr)+
     \bigl[f\bigl(t,x,u^{\varepsilon}(t,x)\bigr)-f\bigl(t,x,\bar u(t,x)\bigr)\bigr]^{2}\\
  &&
    -\bigl(u^{\varepsilon}(t,x)-\bar
     u(t,x)\bigr)\bigl[q_{\left(\tau,y\right)}\bigl(t,x,u^{\varepsilon}(t,x)\bigr)-
     q_{\left(\tau,y\right)}\bigl(t,x,\bar u(t,x)\bigr)\bigr]\\
     & \le& C_{2}\sup_{\omega\in[0,1]}
                      \bigl|f(\tau,y,\omega)-f(t,x,\omega)\bigr|.
\end{eqnarray*}
 This can be written as
$$\bega{l}
     I\bigl(t,x,u^{\varepsilon}(t,x),\bar u(t,x)\bigr)\\
  \qquad-\Big[u^{\varepsilon}(t,x)q_{\left(\tau,y\right)}\bigl(t,x,u^{\varepsilon}(t,x)\bigr)-
      f\bigl(t,x,u^{\varepsilon}(t,x)\bigr)f\bigl(\tau,y,u^{\varepsilon}(t,x)\bigr)\Big]\\[3mm]
    \qquad+\bigl(u^{\varepsilon}(t,x)-\bar
      u(t,x)\bigr)q_{\left(\tau,y\right)}\bigl(t,x,\bar u(t,x)\bigr)
    +\bar u(t,x)q_{\left(\tau,y\right)}\bigl(t,x,u^{\varepsilon}(t,x)\bigr)\\[3mm]
      \qquad-2f\bigl(t,x,u^{\varepsilon}(t,x)\bigr)f\bigl(t,x,\bar
         u(t,x)\bigr) +f^2\bigl(t,x,\bar u(t,x)\bigr)\\[3mm]    \ds
         \le ~C_{2}\cdot\sup_{\omega\in[0,1]}
      \bigl|f(\tau,y,\omega)-f(t,x,\omega)\bigr| \\[3mm]\qquad +
      \bigl[f\bigl(\tau,y,u^{\varepsilon}(t,x)\bigr)-f\bigl(t,x,u^{\varepsilon}(t,x)\bigr)\bigr]f\bigl(t,x,u^{\varepsilon}(t,x)\bigr)
      \\[3mm]    \le \ds~C_{3}\cdot\sup_{\omega\in[0,1]}
      \bigl|f(\tau,y,\omega)-f(t,x,\omega)\bigr|.
\enda $$
  We take the weak$^{*}$ limit in this last equation
  using~\eqref{eq:weak_limits}, \eqref{eq:weak_limits_bis} and \eqref{eq:divcurl_result} to obtain
\begin{eqnarray*}
  &&   \bar I(t,x)-\left[\bar u(t,x)\bar q_{\left(\tau,y\right)}(t,x)-
     \bar f(t,x)\bar f_{\left(\tau,y\right)}(t,x)\right]+\bar u(t,x)
     \bar q_{\left(\tau,y\right)}(t,x)\\  
  && \quad     -2 \bar f(t,x)f\bigl(t,x,\bar u(t,x)\bigr)+f\bigl(t,x,\bar
      u(t,x)\bigr)^{2}~\le ~C_{3}\sup_{\omega\in[0,1]}
      \bigl|f(\tau,y,\omega)-f(t,x,\omega)\bigr|.
\end{eqnarray*}
 Therefore
\begin{eqnarray*}
 &&  \hspace{-1cm} \bar I(t,x)+\bigl[\bar f(t,x)-f\bigl(t,x,\bar u(t,x)\bigr)\bigr]^{2}\\
   & \le& C_{3}\sup_{\omega\in[0,1]}
    \bigl|f(\tau,y,\omega)-f(t,x,\omega)\bigr|
    +\bigl|\bar f(t,x)\bigr|\bigl|\bar f_{\left(\tau,y\right)}(t,x)-\bar f(t,x)\bigr|.
\end{eqnarray*}
  Taking the weak$^{*}$ limit in
\begin{eqnarray*}
    - \sup_{\omega\in[0,1]}
    \bigl|f(\tau,y,\omega)-f(t,x,\omega)\bigr|
   & \le& f\bigl(\tau,y,u^{\varepsilon}(t,x)\bigr)-f\bigl(t,x,u^{\varepsilon}(t,x)\bigr)\\
   & \le& \sup_{\omega\in[0,1]}
    \bigl|f(\tau,y,\omega)-f(t,x,\omega)\bigr|,
\end{eqnarray*}
  we obtain
  \begin{eqnarray*}
    - \sup_{\omega\in[0,1]}
    \bigl|f(\tau,y,\omega)-f(t,x,\omega)\bigr|
  &  \le &\bar f_{\left(\tau,y\right)}(t,x)-\bar f(t,x)\\
   & \le&\sup_{\omega\in[0,1]}
    \bigl|f(\tau,y,\omega)-f(t,x,\omega)\bigr|.
  \end{eqnarray*}
  Hence for any fixed $(\tau,y)\in\Omega$, we have for
  a.e.~$(t,x)\in\Omega$
  \begin{equation}
    \label{eq:last_ineq}
    \bar I(t,x)+\bigl[\bar f(t,x)-f\bigl(t,x,\bar u(t,x)\bigr)\bigr]^{2}
   ~ \le~ C_{4}\sup_{\omega\in[0,1]}
    \bigl|f(\tau,y,\omega)-f(t,x,\omega)\bigr|.
  \end{equation}
  \v
 \n {\bf 4.} 
Call $E_{1}$  the set of Lebesgue points of the left hand side
  of~\eqref{eq:last_ineq}.  Moreover, for each $\omega\in [0,1]$  let $E_{\omega}$ be the set of Lebesgue points
  of the map $(t,x) \mapsto f(t,x,\omega)$. Defining
  \[
  E\doteq E_{1}\cap\left(\displaystyle{\bigcap_{q\in\mathbb{Q}\cap[0,1]}E_{q}}\right),
  \]
we observe that the complement  $\Omega\setminus E$ has zero measure. Take any
  $(\tau,y)\in E$ and fix $\epsilon>0$. Let
  $\F_{\epsilon}\subset \mathbb{Q}\cap [0,1]$ be a finite set such that
  $\displaystyle{\inf_{q\in
      \F_{\epsilon}}}\bigl|q-\omega\bigr|<\epsilon$
  for every $\omega\in [0,1]$. Then
  \begin{eqnarray}
      \sup_{\omega\in[0,1]}
      \bigl|f(\tau,y,\omega)-f(t,x,\omega)\bigr|
      &\le &\max_{q\in \F_{\epsilon}}
      \bigl|f(\tau,y,q)-f(t,x,q)\bigr| +
      2L\epsilon \nonumber\\
      &\le& \sum_{q\in \F_{\epsilon}}
      \bigl|f(\tau,y,q)-f(t,x,q)\bigr| +
      2L\epsilon.
        \label{eq:finite_ineq}
  \end{eqnarray}
  Let $B_{\delta}(\tau,y)$ be the disc in $\Omega$ centered
  in $(\tau,y)$ with radius $\delta>0$, hence with area $\pi \delta^2$.
 Integrating~\eqref{eq:last_ineq} and using~\eqref{eq:finite_ineq} we obtain
  \begin{eqnarray*}
    && \hspace{-1cm} 
    \frac{1}{\pi \delta^2}\int_{B_{\delta}(\tau,y)}
       \Big(\bar I(t,x)+\bigl[\bar f(t,x)-f\bigl(t,x,\bar
       u(t,x)\bigr)\bigr]^{2}\Big)\,dt\, dx\\
    &\le & {C_{4}\over\pi\delta^2} \sum_{q\in \F_{\epsilon}}
      \int_{B_{\delta}(\tau,y)}
      \bigl|f(\tau,y,q)-f(t,x,q)\bigr|\,dt\, dx+ 2C_{4}L\epsilon.
  \end{eqnarray*}
  Since $(\tau,y)$ is a Lebesgue point for the map
  $(t,x)\mapsto f(t,x,q)$, for all $q\in \F_{\epsilon}$, letting
  $\delta\to 0$ we obtain
  \begin{displaymath}
    \bar I(\tau,y)+\bigl[\bar f(\tau,y)-f\bigl(\tau,y,\bar u(\tau,y)\bigr)\bigr]^{2}
   ~ \le ~C_{4}L\epsilon\,.
  \end{displaymath}
 By the arbitrariness of $\epsilon>0$, this implies
  \begin{displaymath}
    \bar I(\tau,y)+\bigl[\bar f(\tau,y)-f\bigl(\tau,y,\bar
        u(\tau,y)\bigr)\bigr]^{2}~\le~ 0\qquad \text{ for every~
    }(\tau,y)\in E\,.
  \end{displaymath}
 Hence $\bar I(t,x)\le 0$ a.e.~in $\Omega$.
  Since $I\bigl(t,x,u^{\varepsilon}(t,x),\bar u(t,x)\bigr)\ge 0$,
  its weak$^{*}$ limit $\bar I(t,x)$ cannot  be negative.   Therefore
  \begin{displaymath}
    \bar I(t,x)=0,\qquad\text{ and }\qquad \bar
    f\left(t,x\right)=f\left(t,x,\bar
      u\left(t,x\right)\right),\quad\text{ a.e. in }\Omega.
  \end{displaymath}
  Using~\eqref{eq:weak_CL}, this implies that the unique weak
  vanishing viscosity limit $\bar u$ is a solution to the conservation
  law~\eqref{eq:cons_law}.

  Assume now that \textbf{(F4)} holds.
  Since  $I(t,x,u^{\varepsilon}(t,x),\bar
    u(t,x)\bigr)\geq 0$ for all $\ve>0$, and it converges weakly$^{*}$ to
    zero, we conclude that  it  converges  
 strongly in $\L^1_{loc}(\Omega)$. We can thus
  take a subsequence such that $I(t,x,u^{\varepsilon}(t,x),\bar
    u(t,x)\bigr)\to 0$ a.e.~in $\Omega$. Property (iii) proved at the
    beginning of the proof implies $u^{\varepsilon}(t,x)\to\bar u(t,x)$
  a.e.~in $\Omega$, completing the proof thanks to the dominated
  convergence theorem, the uniqueness of the limit $\bar u$ and
  Corollary~\ref{cor1bis} to extend the convergence to all $\L^{1}\left(\Omega\right)$.
\end{proof}

\section{Regularity of solutions to scalar conservation laws}
\label{sec:reg}
\setcounter{equation}{0}

Consider the Cauchy problem for a scalar conservation law
\begin{equation}\label{ag}
\begin{cases}
v_t + g(v)_x=0,\qquad &t\in [0,T], ~x\in \R,
\\[1mm]
v(0,x)=v_0 (x), & x\in\R.
\end{cases}
\end{equation}
To ensure that the solution $v=v(t,x)$
is a regulated function, in the sense of Definition~\ref{def:1},
we introduce the following
conditions.
\begi
\item[(C1)]  $v_0 \in \L^\infty(\R)$ and $g''(s)>0$ for all $s\in \R$.
\item[(C2)] $v_0 $ has bounded variation and 
there exists
a value $\bar s\in \R$ such that
 $g''(s)< 0$ for $s<\bar s$ and $g''(s)> 0$ for $s>\bar s$.
\endi

\begin{theorem}\label{thm51}
Let the flux function $g$ be twice continuously differentiable.
Moreover, assume that either (C1) or (C2) holds.  
Then
the unique entropy weak solution $v=v(t,x)$ of~\eqref{ag}
is a regulated function.
\end{theorem}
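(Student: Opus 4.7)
The plan is to exploit the wave structure of entropy solutions for (piecewise) convex fluxes: the approximating step function required by Definition~\ref{def:1} will have discontinuities along two families of Lipschitz curves, namely \emph{shock curves} carrying the large jumps of $v$, and finitely many \emph{characteristic level-set curves} that subdivide rarefaction regions into pieces whose oscillation is below $\varepsilon$.

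Fix the rectangle $[0,T]\times[x_1,x_2]$ and $\varepsilon>0$. In case~(C1), Oleinik's one-sided Lipschitz estimate gives $\tv\{v(t,\cdot);[x_1,x_2]\}\le C/t$ for every $t>0$, so I discard a small initial strip $[0,\tau]$ whose length is absorbed into the $\varepsilon$-budget of condition~(iii); in case~(C2) the BV contraction gives $\tv\{v(t,\cdot);\R\}\le \tv\{v_0;\R\}$ uniformly in $t\ge 0$, so no time need be excluded. In both cases I work with a solution $v$ of uniformly bounded total variation in $x$ on the remaining time interval.

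Set $\delta=\varepsilon/4$. By the BV bound, $v(t,\cdot)$ has at most $O(1/\delta)$ jumps of size $\ge\delta$ on $[x_1,x_2]$, uniformly in $t$. By Lax admissibility for (piecewise) convex $g$, each such jump sits on a shock curve $\gamma$ satisfying Rankine--Hugoniot $\dot\gamma(t)=[g(v)]/[v]$, and hence Lipschitz with $|\dot\gamma|\le\|g'\|_{\L^\infty}$. I then invoke the classical structural theorem (Dafermos, Chapter~XI, for case~(C1); Ballou/Dafermos in the single-inflection case~(C2)) to conclude that on $[0,T]\times[x_1,x_2]$ only finitely many shock curves of strength $\ge\delta$ occur, and these can interact (split, merge, or be created) only at a finite set of times $t_1<\cdots<t_M$. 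Choose disjoint subintervals $[a_i,b_i]\subset[\tau,T]\setminus\bigcup_\ell(t_\ell-\eta,t_\ell+\eta)$ with $\eta>0$ so small that $T-\sum_i(b_i-a_i)<\varepsilon$. On each $[a_i,b_i]$ the large shocks form a fixed finite collection of Lipschitz curves $\gamma_{i,k}$. In the complementary rarefaction regions, $v$ is smooth and constant along characteristics $x=g'(c)t+\text{const}$; I add finitely many of these characteristics, corresponding to a uniform $\delta$-grid of values of $v$, as additional jump curves. Between consecutive curves at any $t\in[a_i,b_i]$, the oscillation of $v$ is at most $O(\delta)$, and a suitable constant $\alpha_{i,k}$ yields the bound~\eqref{6}.

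It remains to verify that each $\dot\gamma_{i,k}$ is regulated on $[a_i,b_i]$. For a rarefaction level-set curve the speed is the constant $g'(c)$. For a shock curve, $\dot\gamma=[g(v)]/[v]$ with traces $v(t,\gamma(t)\pm)$ that admit left and right limits at every $t\in(a_i,b_i)$ by the Lax--Oleinik variational representation in case~(C1), and by the corresponding generalized-characteristic analysis in case~(C2), since no interaction occurs inside $(a_i,b_i)$; since $|v(t,\gamma(t)+)-v(t,\gamma(t)-)|\ge\delta$, no singularity of the quotient arises and $\dot\gamma$ is regulated. The main obstacle is case~(C2): composite waves (rarefaction--shock packets) and possible non-Lax admissible fronts complicate the finiteness of the shock count and the control of interactions; this is precisely where the assumption of a \emph{single} inflection is used, guaranteeing that waves still decompose into finitely many monotone pieces with only finitely many interactions on any bounded region, so that the construction above still applies.
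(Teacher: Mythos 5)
Your overall strategy (finitely many Lipschitz interface curves, oscillation control between consecutive curves, a small excluded set of times) matches the paper's, but two of your key structural claims are unjustified, and one of them is where all the real work lies. In case (C1) you track ``shocks of strength $\ge\delta$'' and assert that they form finitely many curves which interact or are created at only finitely many times, with $v$ smooth and constant along classical characteristics in the complement. For merely $\L^\infty$ (or even \textbf{BV}) data this is false as stated: the complement of the large shocks generically contains infinitely many small shocks, possibly with accumulating generation points, so it is not a region of smoothness and your level-set curves $x=g'(c)t+\mathrm{const}$ are not well defined there; moreover a shock's strength can cross the threshold $\delta$ at times over which you have no a priori control, so the finiteness of your set of ``creation'' times needs an argument you do not supply. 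The paper avoids both problems: it fixes $t_1=\ve/2$, picks finitely many points $y_j$ subdividing $v(t_1,\cdot)$ into pieces of oscillation $<\ve$, and follows the \emph{forward generalized characteristics} through these points. Oleinik's inequality gives forward uniqueness, so these $N$ curves can only merge, capping the number of interaction times at $N-1$, and the funnel property of generalized characteristics confines the values of $v$ between consecutive curves to the values attained at time $a_i$ on the interval between their starting points --- which is how \eqref{6} is actually proved.

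In case (C2) you correctly identify the obstruction (characteristics emerging tangentially from shocks, composite waves) but then simply assert that a single inflection point ``guarantees finitely many interactions''. That assertion is the theorem in disguise. The paper has to work here: it uses minimal forward characteristics, the one-sided gradient bounds of Glass and Jenssen--Sinestrari away from the inflection value $\bar s$ to show that over a sufficiently short time interval the total strength of rarefactions emerging tangentially from shocks is $\le 3\ve/4$, and an interaction estimate (each such emergence costs a definite amount of total variation, so their total strength is bounded by $C\cdot\tv\{v_0\}$) to bound a priori the number of additional subdivision points that must be inserted during the inductive construction. Without some version of these quantitative estimates your construction in case (C2) does not close. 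Note also that the Remark following the theorem shows the conclusion genuinely fails for fluxes with two inflection points, so any correct proof must use the single-inflection hypothesis quantitatively, not merely qualitatively.
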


\begin{figure}[htbp]
\centering
\includegraphics[scale=0.37]{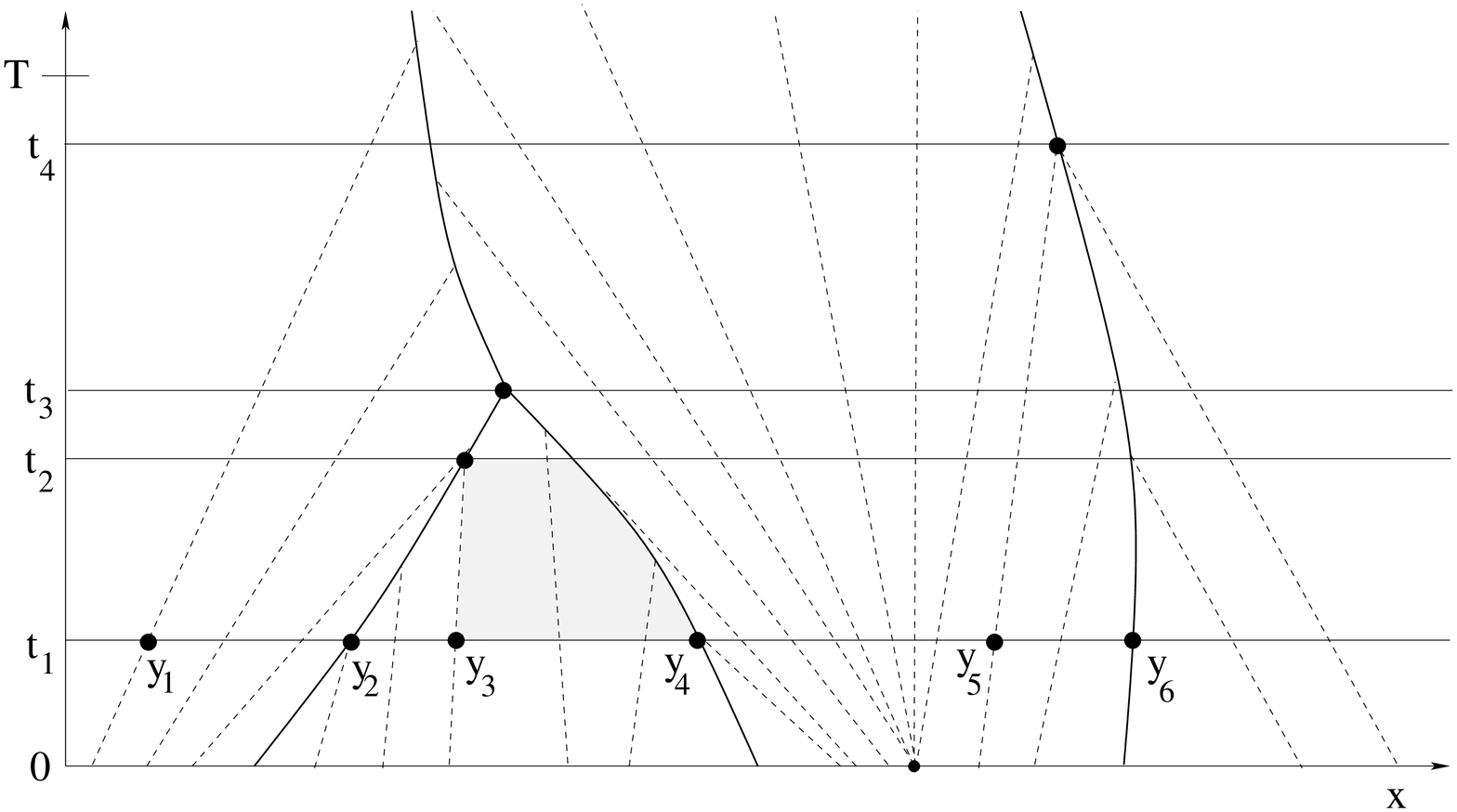}
\caption{\small Proving that, under
  condition (C1), the solution $v$ of~\eqref{ag}
  is a regulated function.
  Here we choose the  curves $\gamma_j$ to be the generalized 
  characteristics through the points $y_1,\ldots, y_6$.
  Notice that the values of the solution
  over the entire shaded region coincide with
  the values taken at time $t_1$ on the interval 
$\,]y_3, y_4[\,$.
}
\label{f:claw60}
\end{figure}

\begin{proof}
\n{\bf 1.} Assume first that the condition (C1) holds.  
To fix the ideas, assume that $v_0 (x)\in [-R,R]$
for all $x\in\R$, and 
Moreover, let $\ve>0$ and an interval $[x_1,x_2]$ be given.
By the strict convexity of the flux, at any time $t>0$ 
the solution 
$v(t,\cdot)$ satisfies Oleinik's inequality
\begin{equation}\label{OI}v(t,y)-v(t,x)~\leq~
  {y-x\over \lambda t} \,,\qquad\quad\forall x<y.\end{equation}
Choose $t_1=\ve/2$. 
Since $v(t_1,\cdot)$ has locally bounded variation, we can choose
finitely many points
\begin{equation}\label{yj} x_1-LT ~=~y_0~<~y_1<~\cdots~ <~y_N~<~y_{N+1}~=~x_2+LT\end{equation}
such that the total variation of $v(t_{1},\cdot)$ on each open interval 
$\,]y_j, y_{j+1}[\,$ is $<\ve$. 

For $j=1,\ldots,N$, 
call $t\mapsto \gamma_j(t)$ the forward generalized characteristic starting at $y_j$.
More precisely, $\gamma_j$ is the unique solution to the 
upper semicontinuous, convex valued differential inclusion
\begin{equation}\label{DI}\dot x(t)~\in~
  \Big[ g'\bigl(v(t, x(t)+))\bigr),\, g'\bigl(v(t,
  x(t)-))\bigr)\Big],
  \qquad x(t_1)~=~y_j\,.\end{equation}
We observe that, since the flux function is strictly convex, 
at any given point $(t,x)$ the right and left limits of the entropy
admissible solution $v$ 
satisfy
$$\lim_{y\to x+} v(t,y)~\leq~\lim_{y\to x-} v(t,y).$$
Oleinik's inequality~\eqref{OI} guarantees the forward uniqueness of
solutions to~\eqref{DI}.

By forward uniqueness, there can be at most $N-1$ times where
two or more of these characteristics meet. This happens when two 
shocks join together, or a genuine characteristic hits a shock. 
Let
\[
t_1~<~t_2~<~~\cdots~~<~t_m~<~t_{m+1}~=~T 
\]
be a finite set of times containing all the interaction times, for some $m\le N$.   
To satisfy the  conditions (i)--(iii) in Definition~1 we proceed as follows.
Consider the disjoint time intervals
\[[a_i, b_i]~=~\left[t_i, t_{i+1}-\ve/(2N)\right].\]
%
Define the curves 
$\gamma_{i,k}$ to be the restrictions of $\gamma_1,\ldots,\gamma_N$ to $[a_i,b_i]$.
Of course, if $\gamma_j$ and $\gamma_\ell$ coincide on $[a_i, b_i]$, they 
are regarded as one single curve.
Finally, we define the constant states as the right limits
\begin{equation}\label{aik}\alpha_{i,k}~
  \doteq~v\bigl(a_i, \gamma_{i,k}(a_i)+\bigr).
\end{equation}

It is now easy to check that all conditions (i)--(iii) in Definition~1 are satisfied.
Indeed, the set of values attained by the solution $v$ satisfies
\[
\Big\{ v(t,x)\,;~t\in [a_i, b_i],\;\gamma_{i,k-1}(t)<x<\gamma_{i,k}(t)\Big\}
\;\subseteq\;\Big\{ v(a_i,x)\,;~\gamma_{i, k-1}(a_i)<x<\gamma_{i,k}(a_i))\Big\}.
\]
Since the total variation of $v(a_i,\cdot)$ on the open interval 
$\,\bigl]\gamma_{i, k-1}(a_i),~\gamma_{i,k}(a_i))\bigr[\,$ is $<\ve$, 
this proves~\eqref{6}.

Next, we observe that the speed of a genuine characteristic 
is constant in time, while the speed of a shock is a function of bounded variation.
In all cases $\dot\gamma_j(\cdot)$ has bounded variation, hence it is a regulated function,
as required in (ii).
Finally, our construction yields
$$T-\sum_i(b_i-a_i)~= ~T-\sum_{i=1}^m (t_{i+1}-\frac{\ve}{2N} - t_i) ~
\leq~t_1 + \ve/2 ~=~\ve,$$
proving (iii).
\v

\begin{figure}[htbp]
\centering
\includegraphics[scale=0.36]{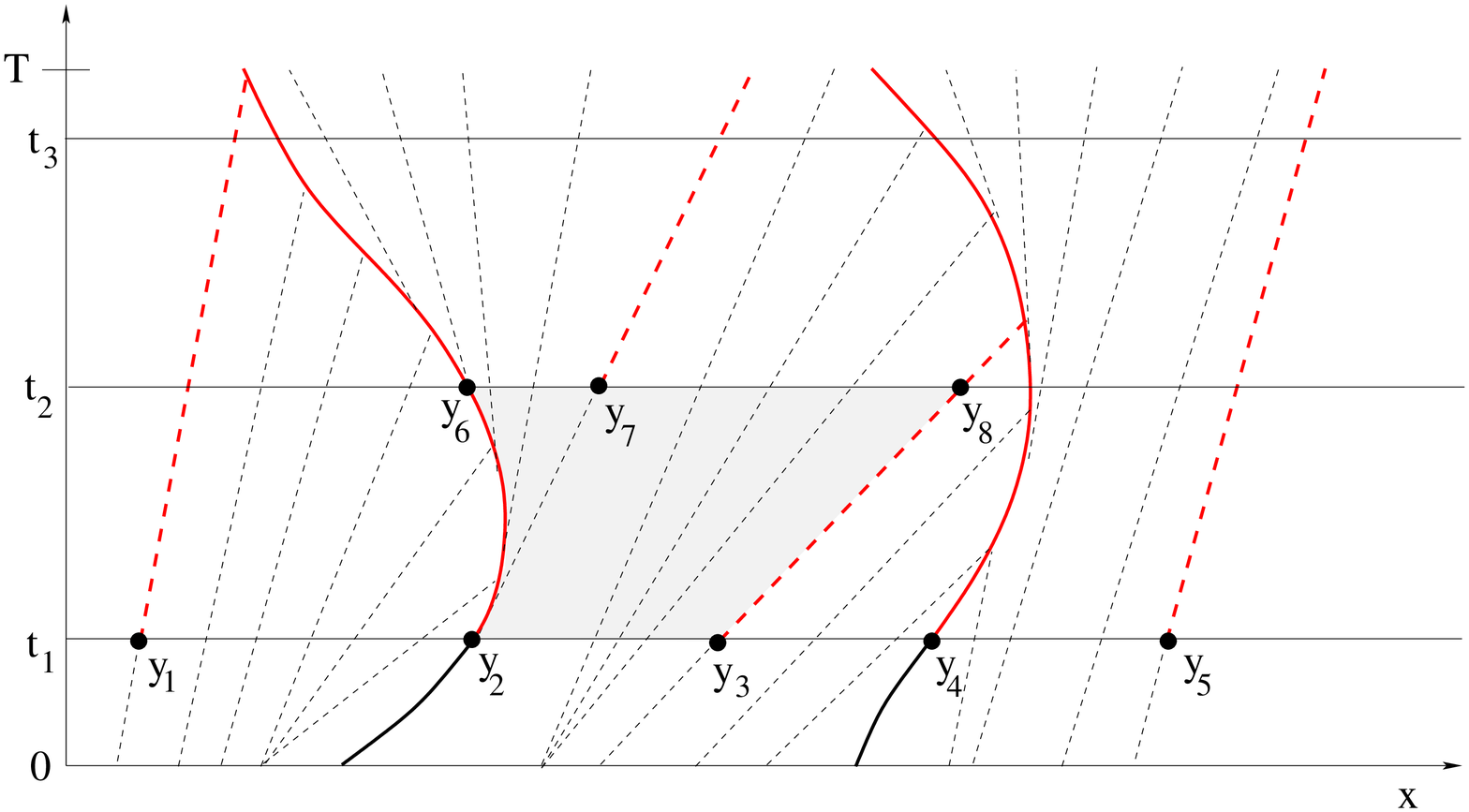}
\caption{\small If the    flux function
  has an inflection point, characteristics can originate from a
  shock, with tangential speed.  
    In this case, the values attained by the solution $v(t,x)$ over the
   shaded region (bounded by the points $y_2,y_3, y_8, y_6$) are not 
    contained in the set of values attained at time $t_1$ over the open 
    interval $]y_2, y_3[\,$.  For this reason, 
    at some time $t_2$ sufficiently close to $t_1$ we need to insert an additional 
    interface, along the characteristic starting at $y_7$.}
\label{f:claw62}
\end{figure}

{\bf 2.} Next, we consider the case where (C2) holds. 
The main difference is that 
now forward characteristics may not be unique.
Indeed, as shown in Fig~\ref{f:claw62},
characteristics can  emerge to the right of a shock, with tangential velocity.
To cope with this  issue, the previous construction can be modified as follows.

Given $\ve>0$, choose $t_1=\ve/2$. At time $t_1$, 
choose points $y_j$ as in~\eqref{yj} so that the total variation of 
$v(t_{1},\cdot)$ on each open interval 
$\,]y_j, y_{j+1}[\,$ is $<\ve/4$. 
For $j=1,\ldots,N$, 
call $t\mapsto \gamma_j(t)$ the minimal forward generalized characteristic starting at $y_j$.
More precisely, 
$$\gamma_j(t)~\doteq~\inf~\bigl\{ x(t)\,;~~x(\cdot) ~\hbox{is a solution of} ~\eqref{DI}\big\}. $$ 
Call $t_2'>t_1$ the first time where two or more of the curves $\gamma_j$ join together.
We remark that in this case it is no longer true that 
$$\Big\{ v(t,x)\,;~~t\in [t_1, t'_1],~~\gamma_{j-1}(t)<x<\gamma_{j}(t)\Big\}
~\subseteq~\Big\{ v(t_1,x)\,;~~\gamma_{j-1}(t_1)<x<\gamma_j(t_1))\Big\},$$
because of the characteristics emerging to the right of a shock. 
However, by the regularity estimates in~\cite{Glass, JS}, there exists a constant $K$ 
such that, for all $t\geq t_1$ and $x\in \R$,
\begin{eqnarray*}
v(t,x)~>~\bar s+{\ve\over 4}\qquad&\implies&\qquad v_x(t,x)~<~K,\\
v(t,x)~<~\bar s-{\ve\over 4}\qquad&\implies&\qquad v_x(t,x)~>\,-K,
\end{eqnarray*}
with $\bar s$ as in (C2).
As a consequence, we can find $\delta_0>0$ such that, on any interval of the form 
$[\tau, \tau+\delta]$ with $\tau\geq t_1$, the total strength of all rarefaction waves emerging 
tangentially from a shock is $\leq 3\ve/4$.
Choosing $t_2\doteq \min\{ t_1', t_1+\delta\}$, the total oscillation 
of $v$ over each domain
\[
\bigl\{ (t,x)\,;~~t\in [t_1, t_2],~~\gamma_{j-1}(t)<x<\gamma_j(t)\bigr\}
\]
is $\leq \ve$.
At time $t_2$ we can insert some additional points $y_k$, so that 
the total oscillation of $v(t_2,\cdot)$ on each open interval bounded by the points 
$\gamma_j(t_2)$ and $y_k$ is $\leq \ve/4$, and repeat the construction up to 
a time $t_3>t_2$, etc.

To prove  that the total number of these time intervals remains finite, we observe that 
the total strength of all rarefaction waves emerging tangentially from a shock is finite.
Indeed, these waves can be generated only when a rarefaction hits a shock form the left.
This produces a decrease in the total variation. We thus have an estimate of the form
\[
\bega{c}[\hbox{total amount of rarefaction waves emerging tangentially from a shock,}\\[1mm] 
\qquad\qquad\qquad\hbox{in the region where $|v-\bar s|>\ve/4$}]~\leq~
C\cdot\tv\{\bar v\},\enda
\]
for some constant $C$.  This ensures that the total number of additional points $y_k$ 
which we need to add during the inductive procedure is a priori bounded.
\v
Defining the constant states $\alpha_{ik}$ as in~\eqref{aik}, the remainder of the proof is 
achieved in the same way as in case (C1).
\end{proof}

\v
{\bf Remark.} As shown in Fig.~\ref{f:claw59}, 
the conclusion of Theorem~\ref{thm51} may fail if $g$ has two inflection points.
Indeed, in this case a solution $v$ can have a pair of large shocks
 splitting apart and joining together infinitely many times.
Nothing prevents the awkward situation where the two shock curves
$\gamma_1(t)\leq \gamma_2(t)$ coincide on a Cantor-like set of times, 
totally disconnected but with positive measure.
In this case, the conditions introduced in Definition~1 cannot be satisfied.
Of course, this does not  preclude the uniqueness of vanishing viscosity
solutions of the triangular  system~\eqref{TS}. It
simply yields a problem outside the scope of the present results.

\begin{figure}[htbp]
\centering
\includegraphics[scale=0.4]{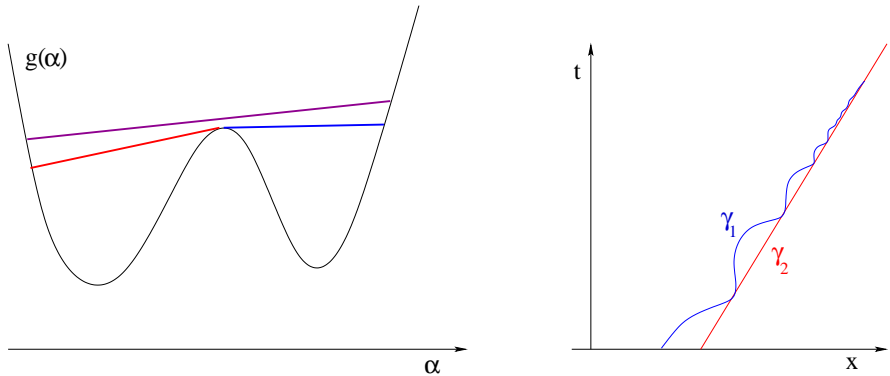}
    \caption{\small Left: a flux function $g$ with two inflection points. 
   Right:  for this flux, one can construct a solution    having two large shocks
   splitting apart and joining together infinitely many times.  }
\label{f:claw59}
\end{figure}

\section{Concluding remarks}\label{sec:concluding}
\setcounter{equation}{0}

In this paper we established the existence and uniqueness of vanishing viscosity solutions
for scalar conservation laws such as~\eqref{1}, 
where the flux function $f(t,x,\omega)=F\left(v(t,x),\omega\right)$ is
discontinuous in both $t$ and $x$. 
See~\cite{CR2} for results of well posedness for
fluxes with \textbf{BV} regularity with respect to the variable $t$.

In turn, the result yields the existence and uniqueness of solutions
for the triangular system~\eqref{TS}, under suitable assumptions on $g$.
The system~\eqref{TS} may lose hyperbolicity where the two eigenvalues
as well as the two eigenvectors coincide.
We remark that 
it is well-known that the total variation for $u$ can blow up in finite time
due to nonlinear resonances.

Our result applies beyond  the case where $v(t,x)$ 
is a solution of a scalar conservation law.
In particular, a regulated function $v(t,x)$ can have discontinuities also
along lines where $t$ is constant. 
An application is provided by polymer flooding
in two phase flow, with adsorption in rough porous media.  This leads to a system of equations
having the form
\begin{eqnarray*}
   s_t + f(s,c,\kappa)_x &=& 0,\\
   (m(c)+cs)_t + (c \, f(s,c,\kappa))_x &=& 0,\\
   \kappa_t &=& 0.
\end{eqnarray*}
The model describes an immiscible flow of water and oil phases, where polymers are dissolved
in the water phase. 
Here $s$ is the saturation of the water phase, $c$ is the fraction of polymer in the water phase,
and $\kappa=\kappa(x)$ denotes the varying porous media.
In the case of rough media, $\kappa(x)$ can be discontinuous.  
The function $f$ is the fractional flow for the water phase, where the map
$s \to f$ is typically S-shaped. 
The function $m(c)$ denotes the adsorption of polymers into the porous media,
satisfying $m' >0, m''<0$.  

A global Riemann solver for this $3\times 3$ system was constructed in~\cite{WS2017}. 
The results in the present paper suggest a possible way to solve general Cauchy problem.
The connection is best revealed using a Lagrangian coordinate system $(\phi,\psi)$, defined as
\[\phi_x=-s,\qquad \phi_t=f(s,c,\kappa), \qquad \phi(0,0)=0,\qquad \psi=x.\]
In these coordinates, the equations take the form
\begin{eqnarray*}
\left(\frac{s}{f(s,c,\kappa)}\right)_\phi -\left(\frac{1}{f(s,c,\kappa)}\right)_\psi &=& 0,\\
m(c)_\phi + c_\psi &=& 0,\\
\kappa_\phi &=& 0.
\end{eqnarray*}
We observe that the equations for $\kappa$ and $c$ are both decoupled, and can be 
solved separately. 
Treating $\phi$ as a time and $\psi$ as a space variable,
the solution $c(\phi,\psi)$ is a regulated function,
while the jumps in $\kappa$ occur along  lines parallel to the $\phi$ axis. 
The system can thus be reduced to the first equation. This is 
a scalar conservation law where the flux depends on time and space in a regulated way.
Details will be given in a future work. 

\v

{\bf Acknowledgment.} The research of the first author was partially 
supported by NSF, with grant DMS-1411786: Hyperbolic Conservation Laws
and Applications.
The research of the second author was partially 
supported by the PRIN~2015 project \emph{Hyperbolic Systems of
  Conservation Laws and Fluid Dynamics: 
  Analysis and Applications}. The second author
acknowledges the hospitality of the Department of Mathematics, Penn
State University - May 2017.
\v

\end{document}